\documentclass[a4paper]{article}

\usepackage[latin1]{inputenc} 
\usepackage{graphicx}
\usepackage[pdftex,bookmarks]{hyperref}
\usepackage{amsmath}    
\usepackage{amssymb}
\usepackage{amsmath,amsthm}
\usepackage{textcomp}
\usepackage{color}
\usepackage[all]{xy}
\usepackage{geometry}
\usepackage{tikz-cd}

\addtolength{\oddsidemargin}{-0.5in}
\addtolength{\evensidemargin}{-1in}
\addtolength{\textwidth}{.90in}

\addtolength{\topmargin}{-.775in}
\addtolength{\textheight}{1.4in}

\newtheorem{teo}{Theorem}[section]
\newtheorem{defi}{Definition}[section]
\newtheorem{lemma}{Lemma}[section]

\newtheorem{cor}{Corollary}[section]
\newtheorem{prop}{Proposition}[section]

\newtheorem{rem} {Remark}[section]

\newcommand{\vep}{\delta}

\DeclareMathOperator{\dvol}{dvol}

\DeclareMathOperator{\supp}{supp}

\DeclareMathOperator{\vol}{vol}

\title{\huge \bf $L^2$-harmonic forms and spinors on stable minimal hypersurfaces}
\author{Francesco Bei and Giuseppe Pipoli} 
\date{}

\begin{document}

\maketitle

\begin{abstract}
Let $f:N\rightarrow (M,g)$ be a two-sided, complete, stable, minimal, immersed hypersurface. In this paper we establish various vanishing theorems for the space of $L^2$-harmonic forms and spinors (when $M$ is additionally spin)  under suitable positive curvature assumptions on the ambient manifold. Our results in the setting of forms extend to higher dimensions and more general ambient Riemannian manifolds previous vanishing theorems due to Tanno \cite{Tanno} and Zhu \cite{Zhu}. In the setting of spin manifolds our results allow to conclude,  for instance, that any oriented, complete, stable, minimal, immersed hypersurface of $\mathbb{R}^m$ or $\mathbb{S}^m$ carries no non-trivial $L^2$-harmonic spinors. Finally, analogous results are proved for strongly stable constant mean curvature hypersurfaces.
\end{abstract}

\textbf{MSC 2020:} 53C42, 53C27, 58J05.\\

\textbf{Keywords:} Stable minimal hypersurface, $L^2$-harmonic spinors, $L^2$-harmonic form, vanishing theorems.

\section*{Introduction}
Minimal hypersurfaces are a central topic in both Riemannian geometry and geometric analysis. We recall that given an $m$-dimensional Riemannian manifold $(M,g)$, a minimal hypersurface in $(M,g)$ is an immersed $(m-1)$-dimensional manifold $f:N\rightarrow (M,g)$ that is a critical point of the area functional. Equivalently $f:N\rightarrow (M,g)$ is a minimal immersion if and only if the mean curvature vector field vanishes identically. Furthermore a minimal hypersurface is said to be stable if the second variation of the area functional is non-negative. A fundamental question in this research area is the so-called stable Bernstein problem which asks whether a stable minimal hypersurface
in $\mathbb{R}^{n}$ must be necessarily a hyperplane. This  problem was shown to be true by do Carmo and Peng \cite{do Carmo}, Fischer-Colbrie and Schoen \cite{Fischer} and Pogorelov \cite{Pogorelov} for $n=3$ and recently by Chodosh and Li  \cite{CL1} and Catino, Matrolia and Roncoroni \cite{CMR} for $n=4$, Chodosh, Li, Minter and Stryker \cite{CL3} for $n=5$ and Mazet \cite{Ma} for $n=6$.  It has a negative answer when $n\geq 8$ as a consequence of the work of Bombieri-De Giorgi-Giusti \cite{BDGG} and it is still open if $n=7$. Concerning the stability of an oriented minimal hypersurface $f:N\rightarrow \mathbb{R}^n$, a topological obstruction was found by Palmer \cite{Palmer}, namely the existence of a codimension-one cycle in $N$ that does not separate $N$. According to Dodziuk \cite{Dodziuk}, the presence of such a cycle implies the existence of a non-trivial $L^2$-harmonic $1$-form on $N$. Thus the key point in Palmer's proof is to show that a stable minimal hypersurface $f:N\rightarrow \mathbb{R}^{n}$ carries no non-trivial $L^2$-harmonic $1$-form. These results led Tanno a few years later to formulate the following problem \cite{Tanno}:\\

 Let $f:N\rightarrow \mathbb{R}^{n}$ be  a complete, oriented and stable minimal hypersurface. Are there non-trivial $L^2$-harmonic $p$-forms on $N$?\\
 
In his paper Tanno proved the vanishing of any $L^2$-harmonic $p$-form when $n=5$ and subsequently Zhu \cite{Zhu1} proved an analogue vanishing when the ambient space is $\mathbb{S}^5$ or more generally a complete five-dimensional Riemannian manifold $(M,g)$ with a positive pinching condition on its sectional curvatures \cite{Zhu}. Note that the first vanishing result can now be seen as an immediate consequence of \cite{CL1}, \cite{CL3} and \cite{CMR}, while the other two are empty by \cite{CMR}. To the best of our  knowledge nothing is known in higher dimension. Besides the above problem there is another vanishing-type question that is interesting to investigate. When the ambient space is $\mathbb{R}^n$, $\mathbb{S}^n$ or any other spin manifold $M$ it is well known that every oriented immersed hypersurface $f:N\rightarrow M$ admits spin structures.  Therefore, besides the existence of $L^2$-harmonic $p$-forms, it is also natural to understand whether $N$ supports any non-trivial $L^2$-harmonic spinor. We call this  second problem the ``{\em Spinorial Tanno's problem}''.  The goal of this paper is to tackle these two vanishing problems described above.  Let us give more details by describing how the paper is organized. The first section contains some general vanishing and finiteness results for the $L^2$-kernel of a  Schr\"odinger or Dirac operator over a complete Riemannian manifold in the presence of a weighted Poincar\'e inequality, see Th. \ref{finiteness} and Th. \ref{c-length}. Besides their crucial role played in the geometric applications developed in the second section, we believe that these results have independent interest on their own. In the second section we collect various geometric applications to stable minimal hypersurfaces that follow from the results proved in the first one. The first application is concerned with the {\em nullity} of a two-sided and complete stable, minimal, immersed hypersurface $f:N\rightarrow (M,g)$. We show that if $(M,g)$ has non-negative Ricci curvature then the nullity of $f:N\rightarrow (M,g)$ is necessarily either $0$ or $1$, see Cor. \ref{nullity}. We also investigate the properties of the conformal Laplacian of a two-sided and complete stable minimal immersed hypersurface. We show that if the ambient manifold has non-negative scalar curvature then the conformal Laplacian is essentially self-adjoint with finite dimensional $L^2$-kernel. Moreover we apply these results to show the existence of (possibly incomplete) Riemannian metrics that are conformally equivalent to the metric induced by the immersion and have vanishing scalar curvature or positive scalar curvature, see Props. \ref{conformal}-\ref{scalarflat}. Then we continue by establishing  various vanishing theorems for $L^2$-harmonic spinors and forms over a stable minimal hypersurface. In particular, concerning the vanishing of $L^2$-harmonic spinors, we have the following result, see Th. \ref{spin} and Cor. \ref{imm2}:

\begin{teo}
\label{spinx}
Let $(M,g)$ be a Riemannian manifold with $s_g\geq 0$. Let $N$ be a spinnable manifold with $\dim(N)+1=\dim(M)$ such that there exists a two-sided, stable minimal immersion $$f:N\rightarrow (M,g)$$ with $(N,g_N)$  complete. Let $P_{\mathrm{Spin(n)}}(N)\rightarrow N$ be an arbitrarily fixed Riemannian spin structure on $N$ and let $(\Sigma N,\tau)\rightarrow N$ and $\eth$ be the corresponding spinor bundle and spin-Dirac operator. Then every $L^2$-harmonic spinor of $(\Sigma N, \tau)\rightarrow N$ has constant length and consequently 
$$\dim\left(\ker(\eth)\cap L^2(N,\Sigma N,g_N,\tau)\right)\leq 2^{n/2}$$ if $n$ is even whereas
$$\dim\left(\ker(\eth)\cap L^2(N,\Sigma N,g_N,\tau)\right)\leq 2^{(n-1)/2}$$ if $n$ is odd. If in addition $\vol_{g_N}(N)=+\infty$ then $(N,g_N)$ carries no non-trivial $L^2$-harmonic spinors.
\end{teo}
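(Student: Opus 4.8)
The plan is to obtain Theorem \ref{spinx} as an instance of the abstract Bochner-type vanishing of Theorem \ref{c-length}, so that the geometric content reduces to producing, on $(N,g_N)$, the weighted Poincar\'e inequality that Theorem \ref{c-length} requires, with the weight matching the curvature term of the Lichnerowicz formula $\eth^2=\nabla^*\nabla+\tfrac14 s_{g_N}$. That inequality comes from stability. First I would record the second variation (stability) inequality of the two-sided minimal immersion $f:N\to(M,g)$: with $A$ its second fundamental form and $\nu$ a (globally defined) unit normal,
\[
\int_N\big(|A|^2+\ric_g(\nu,\nu)\big)\varphi^2\ \le\ \int_N|\nabla\varphi|^2\qquad\text{for every }\varphi\in C_c^\infty(N).
\]
Then I would insert the traced Gauss equation, which for a minimal immersion ($H\equiv 0$) reads $s_{g_N}=s_g-2\ric_g(\nu,\nu)-|A|^2$, so $\ric_g(\nu,\nu)=\tfrac12(s_g-s_{g_N}-|A|^2)$ and therefore
\[
|A|^2+\ric_g(\nu,\nu)=\tfrac12|A|^2+\tfrac12 s_g-\tfrac12 s_{g_N}\ \ge\ -\tfrac12 s_{g_N},
\]
using $|A|^2\ge0$ and the hypothesis $s_g\ge0$. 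Substituting into the stability inequality yields
\[
\int_N\big(-\tfrac14 s_{g_N}\big)\varphi^2\ \le\ \tfrac12\int_N|\nabla\varphi|^2\qquad\text{for every }\varphi\in C_c^\infty(N),
\]
i.e.\ a weighted Poincar\'e inequality with weight $-\tfrac14 s_{g_N}$, the opposite of the Lichnerowicz potential, and with the favourable constant $\tfrac12$.

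This is exactly the hypothesis of Theorem \ref{c-length}, whose conclusion gives the first two assertions; the mechanism is the standard one. An $L^2$-element $\psi$ of $\ker(\eth)$ is smooth by elliptic regularity, and the Lichnerowicz formula together with Kato's inequality $|\nabla|\psi||\le|\nabla\psi|$ show that $u:=|\psi|$ satisfies the distributional inequality $\Delta_{g_N}u\ge\tfrac14 s_{g_N}u$ on all of $N$ (with $\Delta_{g_N}$ the Laplace--Beltrami operator of nonpositive spectrum; the zero set $\{u=0\}$ is handled by the usual $\sqrt{|\psi|^2+\varepsilon^2}$ regularisation). Testing this inequality against $\varphi^2 u$, applying the displayed weighted Poincar\'e inequality to $\varphi u$, and eliminating the scalar-curvature terms between the two, one obtains after a short manipulation $\int_N|\nabla(\varphi u)|^2\le 2\int_N u^2|\nabla\varphi|^2$; choosing $\varphi=\varphi_j$ with $\varphi_j\equiv1$ on $B_j$, $\supp\varphi_j\subset B_{2j}$ and $|\nabla\varphi_j|\le C/j$, the right-hand side tends to $0$ since $u\in L^2(N)$, whence $\nabla u\equiv0$ and $\psi$ has constant length. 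If in addition $\vol_{g_N}(N)=+\infty$, an $L^2$ spinor of constant length must vanish. For the dimension bound, fix $x_0\in N$ and consider the evaluation $\ker(\eth)\cap L^2(N,\Sigma N,g_N,\tau)\to\Sigma_{x_0}N$: it is linear, and injective, because if $\psi(x_0)=0$ then the constant $|\psi|$ equals $|\psi(x_0)|=0$; hence the dimension of the $L^2$-kernel is at most $\dim\Sigma_{x_0}N$, which is $2^{n/2}$ for $n$ even and $2^{(n-1)/2}$ for $n$ odd.

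The geometric input is therefore elementary, once one knows the device of using the \emph{traced} Gauss equation to trade the ambient term $\ric_g(\nu,\nu)$ in the stability inequality for $\tfrac12(s_g-s_{g_N})+\tfrac12|A|^2$; this is exactly what lets the single assumption $s_g\ge0$, rather than a condition on $\ric_g$, suffice. I expect the genuine work to be the one already carried out in Section 1: the careful justification, for $L^2$ and not compactly supported spinors, of the passage from the pointwise Bochner--Kato inequality to the integrated comparison with a weighted Poincar\'e inequality, together with the bookkeeping of which constants are admissible in Theorem \ref{c-length} — of which the value $\tfrac12$ above must be seen to be one.
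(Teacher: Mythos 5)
Your proposal is correct and follows essentially the same route as the paper: the Lichnerowicz formula plus the traced Gauss equation convert the stability inequality into the weighted Poincar\'e hypothesis of Theorem \ref{c-length}, which then yields constant length, the rank bound via the evaluation map at a point, and vanishing when the volume is infinite. The only (harmless) difference is your choice of weight $q=-\tfrac14 s_{g_N}$, giving $L+q=0$, where the paper takes $q=\tfrac12\left(\mathrm{Ric}_g(\mathcal{N},\mathcal{N})+|A|^2\right)$, giving $L+q=\tfrac14\left(s_g+|A|^2\right)$ — a choice the paper later exploits for stronger corollaries (e.g.\ the totally geodesic conclusion) but which is immaterial for the statement at hand.
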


In particular the above result settles completely the spinorial Tanno's problem in both $\mathbb{R}^m$ and $\mathbb{S}^m$ since it implies that an oriented and complete  stable, minimal, immersed hypersurface of $\mathbb{R}^m$ or $\mathbb{S}^m$ carries no non-trivial $L^2$-harmonic spinors, see Cor. \ref{imm2} and Th. \ref{spin-cor}. Concerning the vanishing of $L^2$-harmonic forms the situation is more involved. We found a condition on the principal curvatures, see \eqref{pcc}, that coupled with some curvature conditions on the ambient manifold, yields various vanishing results. More precisely, see Th. \ref{noL2forms} in the text, we have 
\begin{teo}
\label{noL2formsx}
Let $(M,g)$ be a Riemannian manifold of dimension $m+1$  with sectional curvature $\mathrm{sec}_g$ and curvature operator $\mathcal R_g$ and let $\Sigma$ be an oriented manifold of dimension $m$ with a two-sided, stable minimal immersion $f:\Sigma\rightarrow (M,g)$ such that $(\Sigma,g_{\Sigma})$ is complete. { Let $2\leq p\leq \frac m2$}.
\begin{enumerate}
\item  If $m\geq 4$ and  $\mathcal{R}_g\geq0$ (or more generally there exists $\gamma\in \mathbb{R}$ such that $\mathcal{R}_g\geq \gamma$ and $p(m-p)\gamma+\mathrm{Ric}_g(\mathcal{N},\mathcal{N})\geq 0$) and  $|A|^2-K^2_{\alpha}\geq 0$ for any $\alpha\subset \{1,...,m\}$ with $|\alpha|=p$, then every $L^2$-harmonic $p$-form on $(\Sigma,g_\Sigma)$ has constant length.  If in addition $\Sigma$ is not totally geodesic or $\mathrm{Ric}_g(\mathcal{N},\mathcal{N})$ is somewhere positive on $f(\Sigma)$, then $$\mathcal{H}^p_2(\Sigma,g_{\Sigma})=\mathcal{H}^{m-p}_2(\Sigma,g_{\Sigma})=\{0\}.$$
\item If $m\geq 6$ and $\mathrm{sec}_g\in [a,b]$ with $0<a\leq b\leq \varepsilon_{m,p}a$ with $\varepsilon_{m,p}$ the constant defined in \eqref{nonsharp} and $|A|^2-K^2_{\alpha}\geq 0$ for any $\alpha\subset \{1,...,m\}$ with $|\alpha|=p$, then $$\mathcal{H}^p_2(\Sigma,g_{\Sigma})=\mathcal{H}^{m-p}_2(\Sigma,g_{\Sigma})=\{0\}.$$
\item If $m\geq 6$ and $\mathrm{sec}_g\in [a,b]$ with $0<a\leq b\leq c_ma$, $c_m$ the constant defined in \eqref{sharp} and $|A|^2-K^2_{\alpha}\geq 0$ for any $\alpha\subset \{1,...,m\}$ with $|\alpha|=2$, then $$\mathcal{H}^2_2(\Sigma,g_{\Sigma})=\mathcal{H}^{m-2}_2(\Sigma,g_{\Sigma})=\{0\}.$$
\end{enumerate}
\end{teo}
We point out that under the hypothesis of points $2.$ or $3.$, if $m\leq 5$ there are no complete oriented stable minimal  hypersurfaces, see \cite[Corollary 1.3]{CMR}. 
The remaining part of the second section and the third section contain a thorough discussion about the condition $|A|^2-K^2_{\alpha}\geq 0$ and the curvature condition of the ambient manifold as well as various applications of Th. \ref{noL2formsx}. For the sake of brevity we recall here only that Th. \ref{noL2formsx} implies the following vanishing results:
\begin{itemize}
\item Any oriented and complete, stable, minimal, immersed hypersurface in $\mathbb{S}^3\times \mathbb{R}^2$ or in $\mathbb{S}^2\times \mathbb{R}^3$ carries no non-trivial $L^2$-harmonic forms. 
\item The minimal entire graphs in $\mathbb{R}^{2n+1}$ with $n\geq 4$, constructed by Bombieri-De Giorgi-Giusti \cite{BDGG}, carry no non-trivial $L^2$-harmonic $p$-forms for any $p\leq \sqrt{2n}$.
\item Any oriented and complete, stable, minimal, immersed hypersurface of $\mathbb{S}^m$ carries no non-trivial $L^2$-harmonic $p$-forms  if the second fundamental form is bounded above by a certain constant, see Th. \ref{Abound} and the subsequent discussion.
\end{itemize}

Finally, in the fourth section we briefly discuss how to extend our results to the case of  strongly stable  constant mean curvature hypersurfaces.

\section{Finiteness and vanishing theorems for Schr\"odinger and Dirac operators}

This section is devoted to various vanishing and finiteness results for the $L^2$-kernel of Dirac and Schr\"oedinger operators on certain complete Riemannian manifolds. As we will see later in this paper, these results will play a crucial role in our geometric applications. We start now by introducing some notation and preliminary notions. Let $(N,g)$ be a Riemannian manifold endowed with a Riemannian (or Hermitian) vector bundle $(E,\rho)\rightarrow N$. Let $$P:=\nabla^t\circ \nabla+L$$ be a formally self-adjoint Schr\"odinger operator acting on $C^{\infty}_c(N,E)$, where  $\nabla$ is an arbitrarily fixed metric connection on $(E,\rho)$, $\nabla^t$ is the formal adjoint of $\nabla$ w.r.t. $g$ and $\rho$ and $L\in C^{\infty}(N,\text{End}(E))$ is a fiberwise self-adjoint endomorphism w.r.t. $\rho$. Let $\Delta:C^{\infty}(N,\mathbb{R})\rightarrow C^{\infty}(N,\mathbb{R})$ be the Laplace-Beltrami operator induced by $g$. In this paper we use the definition $\Delta:=d^t\circ d$ with $d^t:\Omega_c^1(N)\rightarrow C_c^{\infty}(N)$ the formal adjoint of $d:C^{\infty}_c(N)\rightarrow \Omega^1_c(N)$ w.r.t. $g$. In particular $\Delta$ is non-negative on $C_c^{\infty}(N,g)$. Finally let $q\in C^{\infty}(N,\mathbb{R})$ be arbitrarily fixed. We have the following 
\begin{teo}
\label{finiteness}
In the above setting assume that  $(N,g)$ is complete and that 
\begin{equation}
\label{1sign}
L+q\geq 0\quad \mathrm{and}\quad\quad \langle(\Delta-q)\phi,\phi\rangle_{L^2(N,g)}\geq 0
\end{equation}
for every $\phi\in C^{\infty}_c(N,\mathbb{R})$. Then $$P:L^2(N,E,g,\rho)\rightarrow L^2(N,E,g,\rho)$$ defined initially on $C^{\infty}_c(N,E)$ is essentially self-adjoint. Moreover if $q\geq 0$ on $N$ then every section $0\neq s\in \ker(P)\cap L^2(N,E,g,\rho)$ is nowhere vanishing on $N$. Consequently $\ker(P)\cap L^2(N,E,g,\rho)$ is finite dimensional and satisfies 
\begin{equation}
\label{xxx}
\dim(\ker(P)\cap L^2(N,E,g,\rho))\leq \mathrm{rnk}(E).
\end{equation}
\end{teo}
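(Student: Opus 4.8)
The strategy is a three-step argument: first a Bochner–Weitzenböck/integration-by-parts identity, then a density/essential-self-adjointness argument using completeness, and finally a pointwise "strong unique continuation for nonnegative supersolutions" argument via the function $|s|$.

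First I would establish the Kato-type inequality and the key differential inequality. For $s\in\ker(P)\cap L^2$, write $|s|$ for the pointwise norm and recall Kato's inequality $|\nabla|s||\le|\nabla s|$ in the distributional sense. Pairing $Ps=0$ with $s$ and using $P=\nabla^t\nabla+L$ gives, formally, $\tfrac12\Delta|s|^2=-|\nabla s|^2-\langle Ls,s\rangle$, hence $|s|\Delta|s|\le -\langle Ls,s\rangle\le q|s|^2$ wherever $|s|>0$, i.e. $\Delta|s|\le q|s|$ in the distributional sense on $N$, using $L\ge -q$ (from \eqref{1sign}) and $|\nabla s|^2\ge|\nabla|s||^2$ to absorb the gradient terms. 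Thus $u:=|s|\ge 0$ is a distributional subsolution of $\Delta u\le qu$; equivalently $(\Delta-q)u\le 0$.

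Second, I would invoke the essential self-adjointness of $P$, which is the first conclusion of the theorem (already proved under \eqref{1sign}): this guarantees that elements of the $L^2$-kernel are genuine smooth solutions and that one may integrate by parts freely against them, and more importantly it lets me treat $\ker(P)\cap L^2$ as the kernel of the unique self-adjoint extension. The clean way to finish is to test: multiply $(\Delta-q)u\le 0$ against $u\,\varphi^2$ for a cutoff $\varphi$, use completeness to choose $\varphi$ with $\varphi\uparrow 1$ and $|d\varphi|\to 0$ (a Gaffney/cutoff trick valid on complete manifolds), and combine with the hypothesis $\langle(\Delta-q)\phi,\phi\rangle_{L^2}\ge0$ for compactly supported $\phi$ applied to $\phi=u\varphi$. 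Since $q\ge 0$ is now assumed, $(\Delta-q)u\le 0$ together with $u\in L^2$ and $u\ge 0$ forces, after the limiting argument, $\nabla s\equiv 0$ and $\langle Ls,s\rangle\equiv 0$; in particular $u=|s|$ is constant (being parallel in norm, or: a nonnegative bounded subsolution of $\Delta u\le 0$ on a complete manifold that is parallel). If $s\not\equiv 0$ this constant is positive, so $s$ is nowhere vanishing — this is the heart of the statement.

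Third, the dimension bound \eqref{xxx} is then elementary linear algebra. Fix $x_0\in N$ and consider the evaluation map $\mathrm{ev}_{x_0}:\ker(P)\cap L^2(N,E,g,\rho)\to E_{x_0}$. If $s$ is in the kernel of $\mathrm{ev}_{x_0}$, then $s(x_0)=0$, but by the nowhere-vanishing conclusion just proved this forces $s\equiv 0$; hence $\mathrm{ev}_{x_0}$ is injective and $\dim(\ker(P)\cap L^2)\le\dim E_{x_0}=\mathrm{rnk}(E)$, which also gives finite dimensionality.

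The main obstacle is the second step: justifying that the formal Bochner computation and the integration by parts are legitimate for $L^2$ sections on a possibly incomplete-in-spirit (but here complete) manifold without extra decay assumptions on $\nabla s$ or on $q$. This is exactly what the completeness hypothesis plus essential self-adjointness of $P$ is there to supply, via a careful choice of Lipschitz cutoffs and a limiting argument; one must make sure the error terms $\int|s|\,|\nabla s|\,|d\varphi|$ and $\int|s|^2|d\varphi|^2$ vanish in the limit, which uses only $s,\nabla s\in L^2$ — the former membership being automatic from $Ps=0$, $s\in L^2$, and essential self-adjointness (elliptic regularity plus the fact that $\|\nabla s\|_{L^2}^2=-\langle Ls,s\rangle_{L^2}\le\|q^{1/2}s\|_{L^2}^2$ is controlled once the cutoff argument closes). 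All the rest is routine.
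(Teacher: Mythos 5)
Your overall architecture (Kato inequality, cutoff/limiting argument exploiting completeness and the spectral positivity of $\Delta-q$, then the evaluation-map argument for the rank bound) is the same as the paper's, and your third step is exactly right. But your second step contains a genuine error, and it sits precisely at what you yourself call ``the heart of the statement.''

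The limiting argument does \emph{not} force $\nabla s\equiv 0$, nor that $|s|$ is constant. What it yields is this: writing $\langle Ps,s\rangle=\int\bigl(|\nabla s|^2-|d|s||^2\bigr)+\int\bigl(|d|s||^2-q|s|^2\bigr)+\int\rho((L+q)s,s)$ and approximating $s$ by compactly supported sections (which requires essential self-adjointness), each of the three non-negative pieces tends to zero. The second piece tending to zero, combined with the closedness of the quadratic form of $\Delta-q$ (the paper's Lemma \ref{solution}), upgrades your one-sided distributional inequality to the genuine \emph{equation} $\Delta|s|=q|s|$. This upgrade is essential: with the paper's convention $\Delta=d^t d$, your inequality $\Delta|s|\le q|s|$ is the subsolution direction, and no maximum principle applied to it can rule out interior zeros of $|s|$. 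It is only the equation $\Delta|s|=q|s|\ge 0$ (using $q\ge 0$) that makes $|s|$ superharmonic, and then the strong minimum principle gives the dichotomy $s\equiv 0$ or $|s|>0$ everywhere. You never invoke the minimum principle; instead you assert constancy of $|s|$, which is false in general: take $E$ the trivial line bundle, $L=-q$, $P=\Delta-q$ with $\Delta-q\ge 0$ admitting a positive non-constant $L^2$ solution of $\Delta u=qu$ --- then $s=u$ lies in $\ker(P)\cap L^2$ with $|s|$ non-constant and $\nabla s\ne 0$. (The constancy conclusion is available for the \emph{Dirac} operator in Theorem \ref{c-length}, where the refined Kato inequality supplies the extra term, but not here.) A secondary gap: you invoke essential self-adjointness of $P$ as ``already proved,'' but it is part of the statement; the paper establishes it by showing $\langle Ps,s\rangle\ge 0$ on $C_c^\infty(N,E)$ (Kato plus the extension of the spectral inequality to $W^{1,2}_{\mathrm{comp}}$) and then killing $\ker(P+\mathrm{Id})\cap L^2$ with a cutoff computation. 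You should supply that argument rather than presuppose its conclusion.
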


Note that in the above inequality \eqref{xxx}, $\dim(...)$ and $\mathrm{rnk}(...)$ denote the real or complex dimension and rank according to the fact that  $E$ is a real or a complex vector bundle. In order to prove the above theorem we need some preliminary results. First of all we recall that $W^{1,2}(N,g)$ is the Sobolev space made by functions $f\in L^2(N,g)$ such that $df$, understood in the distributional sense, lies in $L^2\Omega^1(N,g)$, the Hilbert space of $L^2$ integrable $1$-forms on $(N,g)$. Since $(N,g)$ is complete it is well known that $C^{\infty}_c(N)$ is dense in $W^{1,2}(N,g)$ w.r.t the graph norm of $d$. We denote with $\overline{d}:W^{1,2}(N,g)\rightarrow L^2\Omega^1(N,g)$ the extension of $d:C^{\infty}_c(N)\rightarrow L^2(N,g)$ and with $W^{1,2}_{\mathrm{comp}}(N,g)$ the subspace of $W^{1,2}(N,g)$ made by functions whose distributional support is compact.

\begin{lemma}
\label{W1}
In the setting of Th. \ref{finiteness} we have $$\int_N\left(|\overline{d}\phi|^2_g-q\phi^2\right)\dvol_g\geq 0$$ for any $\phi\in W^{1,2}_{\mathrm{comp}}(N,g)$. If in addition $q\in L^{\infty}(N)$ then the above inequality holds true for any $\phi\in W^{1,2}(N,g)$.
\end{lemma}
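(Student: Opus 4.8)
The plan is to translate the hypothesis \eqref{1sign} into the desired inequality restricted to smooth compactly supported test functions, and then extend it in two successive steps: first to $W^{1,2}_{\mathrm{comp}}(N,g)$, and then, under the extra assumption $q\in L^{\infty}(N)$, to all of $W^{1,2}(N,g)$. For $\phi\in C^{\infty}_c(N,\mathbb{R})$ integration by parts gives $\langle\Delta\phi,\phi\rangle_{L^2(N,g)}=\int_N|d\phi|^2_g\,\dvol_g$, so \eqref{1sign} is exactly $\int_N\bigl(|d\phi|^2_g-q\phi^2\bigr)\dvol_g\geq 0$ for every such $\phi$; the task is to pass to the limit under approximation while controlling the zeroth order term, which is where the (non)boundedness of $q$ enters.

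For the first step, fix $\phi\in W^{1,2}_{\mathrm{comp}}(N,g)$ with $\supp\phi\subset K$, $K$ compact, and pick $\chi\in C^{\infty}_c(N)$ with $\chi\equiv 1$ on a neighbourhood of $K$. Since $(N,g)$ is complete, $C^{\infty}_c(N)$ is dense in $W^{1,2}(N,g)$ for the graph norm of $d$, so there are $\psi_k\in C^{\infty}_c(N)$ with $\psi_k\to\phi$ in $W^{1,2}(N,g)$. Then $\chi\psi_k\in C^{\infty}_c(N)$ are all supported in the fixed compact set $\supp\chi$, and by the Leibniz rule $\chi\psi_k\to\chi\phi=\phi$ in $W^{1,2}(N,g)$, with $d(\chi\psi_k)\to\overline{d}\phi$ in $L^2\Omega^1(N,g)$. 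Applying \eqref{1sign} to $\chi\psi_k$ and letting $k\to\infty$: the Dirichlet terms converge to $\int_N|\overline{d}\phi|^2_g\,\dvol_g$ by the $L^2$-convergence of $d(\chi\psi_k)$, while $q$ is bounded on the compact set $\supp\chi$ by some constant $C$ and $(\chi\psi_k)^2\to\phi^2$ in $L^1(N,g)$ (via Cauchy--Schwarz, $\|(\chi\psi_k)^2-\phi^2\|_{L^1}\leq\|\chi\psi_k-\phi\|_{L^2}\|\chi\psi_k+\phi\|_{L^2}$), so $\int_N q(\chi\psi_k)^2\,\dvol_g\to\int_N q\phi^2\,\dvol_g$. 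This yields the inequality for $\phi\in W^{1,2}_{\mathrm{comp}}(N,g)$.

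For the second step, assume $q\in L^{\infty}(N)$ and take $\phi\in W^{1,2}(N,g)$. Fix $x_0\in N$ and $\psi\in C^{\infty}_c(\mathbb{R})$ with $0\leq\psi\leq 1$, $\psi\equiv 1$ on $[0,1]$ and $\supp\psi\subset[0,2)$, and set $\eta_j(x):=\psi(d_g(x,x_0)/j)$. Because $d_g(\cdot,x_0)$ is $1$-Lipschitz, $\eta_j$ is Lipschitz and compactly supported, equals $1$ on the ball $B_j(x_0)$, and satisfies $|\overline{d}\eta_j|_g\leq\|\psi'\|_{\infty}/j$ almost everywhere --- this is the one point that uses completeness of $(N,g)$. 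Then $\eta_j\phi\in W^{1,2}_{\mathrm{comp}}(N,g)$ with $\overline{d}(\eta_j\phi)=\eta_j\overline{d}\phi+\phi\,\overline{d}\eta_j$; since $\|\phi\,\overline{d}\eta_j\|_{L^2}\leq(\|\psi'\|_{\infty}/j)\|\phi\|_{L^2}\to 0$ and $\eta_j\overline{d}\phi\to\overline{d}\phi$ and $\eta_j\phi\to\phi$ in $L^2$ by dominated convergence, applying the first step to $\eta_j\phi$ and passing to the limit exactly as before --- now using the global bound $|q|\leq\|q\|_{L^{\infty}(N)}$ --- gives $\int_N\bigl(|\overline{d}\phi|^2_g-q\phi^2\bigr)\dvol_g\geq 0$. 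The only genuinely delicate issue is the first step: $q$ is a priori only smooth, hence possibly unbounded, and it is precisely the device of forcing every approximant into a single compact set (on which $q$ is automatically bounded) that makes it go through, while the unrestricted statement for general $\phi\in W^{1,2}(N,g)$ genuinely needs the extra hypothesis $q\in L^{\infty}(N)$.
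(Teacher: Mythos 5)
Your proof is correct and follows essentially the same strategy as the paper's: approximate by smooth compactly supported functions, pass to the limit in the Dirichlet term by $L^2$-convergence of the differentials, and control the $q$-term via Cauchy--Schwarz together with the boundedness of $q$ on a fixed compact set (first step) or on all of $N$ (second step); the paper simply asserts the existence of an approximating sequence supported in a fixed neighbourhood of $\supp\phi$, whereas you construct it explicitly with a cutoff $\chi$. The only divergence is in the second step, where the paper directly approximates $\phi\in W^{1,2}(N,g)$ by $C^{\infty}_c(N)$ in the graph norm and reruns the estimate with $\|q\|_{L^{\infty}(N)}$, while you instead truncate with distance-function cutoffs $\eta_j$ and invoke the already-proved compactly supported case --- both routes are standard and valid.
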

\begin{proof}
Let $\phi\in W^{1,2}_{\mathrm{comp}}(N,g)$ and let $U$ be an open neighbourhood of $\mathrm{supp}(\phi)$, the support of $\phi$. Then there exists a sequence $\{\phi_j\}_{j\in \mathbb{N}}\subset C_c^{\infty}(N)$ such that $\phi_j\rightarrow \phi$ in $L^2(N,g)$ as $j\rightarrow +\infty$, $d\phi_j\rightarrow \overline{d}\phi$ in $L^2\Omega^1(N,g)$ as $j\rightarrow +\infty$ and $\mathrm{supp}(\phi_j)\subset U$ for each $j\in \mathbb{N}$. In this way we have $$\int_N|\overline{d}\phi|_g^2\dvol_g=\|\overline{d}\phi\|^2_{L^2\Omega^1(N,g)}=\lim_{j\rightarrow +\infty}\|d\phi_j\|^2_{L^2\Omega^1(N,g)}=\lim_{j\rightarrow +\infty}\int_N|d\phi_j|_g^2\dvol_g$$ and 
\begin{equation}
\label{stima}
\left|\int_Nq(\phi^2-\phi_j^2)\dvol_g\right|\leq \int_U\left|q(\phi^2-\phi_j^2)\right|\dvol_g\leq \|q\|_{L^{\infty}(U)}\|\phi-\phi_j\|_{L^2(N,g)}\|\phi+\phi_j\|_{L^2(N,g)}.
\end{equation}
The above inequality implies that $$\lim_{j\rightarrow +\infty}\int_Nq(\phi^2-\phi_j^2)\dvol_g=0$$ since $$\lim_{j\rightarrow +\infty}\|\phi-\phi_j\|_{L^2(N,g)}=0\quad \text{and}\quad \lim_{j\rightarrow +\infty}\|\phi+\phi_j\|_{L^2(N,g)}=2\|\phi\|_{L^2(N,g)}.$$ We can thus conclude that $$\int_N\left(|\overline{d}\phi|^2_g-q\phi^2\right)\dvol_g\geq 0$$ because $$\int_N\left(|\overline{d}\phi|^2_g-q\phi^2\right)\dvol_g=\lim_{j\rightarrow +\infty}\int_N\left(|d\phi_j|^2_g-q\phi_j^2\right)\dvol_g$$ and $$\int_N\left(|d\phi_j|^2_g-q\phi_j^2\right)\dvol_g=\langle\Delta \phi_j-q\phi_j,\phi_j\rangle_{L^2(N,g)}\geq 0.$$ Let now tackle the case where $q\in L^{\infty}(N)$ and $\phi\in W^{1,2}(N,g)$. Let $\{\phi_j\}_{j\in \mathbb{N}}\subset C_c^{\infty}(N,\mathbb{R})$ be a sequence converging to $\phi$ in $W^{1,2}(N,g)$. The proof follows by repeating the same argument above since we can replace \eqref{stima} with  $$\left|\int_Nq(\phi^2-\phi_j^2)\dvol_g\right|\leq \int_N\left|q(\phi^2-\phi_j^2)\right|\dvol_g\leq \|q\|_{L^{\infty}(N)}\|\phi-\phi_j\|_{L^2(N,g)}\|\phi+\phi_j\|_{L^2(N,g)}.
$$
\end{proof}

\begin{lemma}
\label{solution}
In the setting of Th. \ref{finiteness} let $\psi\in L^2(N,g)$ and let $\{\psi_j\}_{j\in \mathbb{N}}\subset W^{1,2}_{\mathrm{comp}}(N,g)$ be a sequence such that 
$$
\lim_{j\rightarrow +\infty}\|\psi-\psi_j\|_{L^2(N,g)}=0\quad \text{and}\quad \lim_{j\rightarrow +\infty}\int_N \left(|\overline{d}\psi_j|^2_{g}-q|\psi_j|^2\right)\dvol_g=0.$$
 Then  $$\psi\in \ker(H)$$ with $H:L^2(N,g)\rightarrow L^2(N,g)$ the unique $L^2$-closed extension of $\Delta-q:C^{\infty}_c(N)\rightarrow L^2(N,g)$. Consequently $\psi\in C^{\infty}(N)\cap \ker(\Delta-q)$. 
\end{lemma}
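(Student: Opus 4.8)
The plan is to show $\psi$ lies in the domain of $H$ (the $L^2$-closed extension of $\Delta - q$) with $H\psi = 0$, by producing a weakly convergent subsequence of $\{\overline{d}\psi_j\}$ and identifying its limit with $\overline{d}\psi$, then invoking elliptic regularity and Lemma \ref{W1}.

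First I would observe that the hypotheses, together with Lemma \ref{W1}, give uniform control on the Dirichlet energies: since $\int_N\big(|\overline{d}\psi_j|_g^2 - q|\psi_j|^2\big)\dvol_g \to 0$ and each such integral is $\geq 0$, and since $\|\psi_j\|_{L^2}\to\|\psi\|_{L^2}$, one does \emph{not} immediately get a bound on $\|\overline{d}\psi_j\|_{L^2}$ unless $q$ is sign-controlled. The correct route is instead the quadratic-form one: for the closed extension $H$ one considers the associated closed quadratic form $Q(\phi)=\lim_j\int_N(|d\phi_j|^2 - q\phi_j^2)\dvol_g$ defined on the form-domain, which by Lemma \ref{W1} is non-negative and by completeness is the closure of $\phi\mapsto\langle(\Delta-q)\phi,\phi\rangle$ on $C_c^\infty(N)$. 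The data of the lemma say precisely that $\psi$ is the $L^2$-limit of $\psi_j$ with $Q$-values tending to $0$; since $Q\geq 0$ is a closed non-negative form, its lower semicontinuity along $L^2$-convergent sequences with bounded form-values forces $\psi$ to be in the form-domain with $Q(\psi)=0$, hence (as $Q$ is the form of the self-adjoint operator $H_{\mathrm{Fr}}$, the Friedrichs extension, which coincides with $H$ by the essential self-adjointness established in Th. \ref{finiteness} — or, more carefully, one works with the Friedrichs extension directly and only later uses essential self-adjointness) $\psi\in\ker(H)$. Concretely: $Q(\psi)=0$ and $Q\geq0$ imply via Cauchy–Schwarz for the form that $Q(\psi,\phi)=0$ for all $\phi$ in the form-domain, i.e. $\langle(\Delta-q)\phi,\psi\rangle_{L^2}=0$ for all $\phi\in C_c^\infty(N)$, which says exactly that $\psi$ is a distributional solution of $(\Delta-q)\psi=0$.

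Having $(\Delta - q)\psi = 0$ in the distributional sense with $\psi\in L^2(N,g)$ and $q\in C^\infty(N,\mathbb R)$, elliptic regularity for the Laplace–Beltrami operator gives $\psi\in C^\infty(N)$, and then $\psi\in\ker(\Delta-q)$ in the classical sense. Since $(N,g)$ is complete and $\Delta - q$ on $C_c^\infty(N)$ is essentially self-adjoint (this is the scalar case of Th. \ref{finiteness}, with $E$ the trivial line bundle, $\nabla$ the trivial connection, $L=-q$, so that $L+q=0\geq0$ and the second inequality in \eqref{1sign} is the hypothesis here — note the scalar case needs only that weaker input), its unique closed extension $H$ is self-adjoint, and a smooth $L^2$ solution of $(\Delta-q)\psi=0$ automatically lies in $\mathrm{Dom}(H)$ with $H\psi=0$. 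This yields $\psi\in\ker(H)$ and completes the proof.

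The main obstacle is the first step: extracting the conclusion $Q(\psi)=0$ from the sequential data without a uniform $W^{1,2}$ bound. One cannot simply pass to a weak limit of $\overline{d}\psi_j$ in $L^2\Omega^1$ because $q$ may be unbounded below and the energies $\|\overline{d}\psi_j\|_{L^2}$ need not stay bounded; the resolution is to phrase everything through the closed non-negative quadratic form $Q$ and use its lower semicontinuity (equivalently, work inside the Hilbert space given by the form-domain with inner product $Q(\cdot,\cdot)+\langle\cdot,\cdot\rangle_{L^2}$, in which $\{\psi_j\}$ is a bounded — indeed, one checks, Cauchy — sequence). Once this functional-analytic point is handled, the regularity and self-adjointness steps are standard.
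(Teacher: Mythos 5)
Your proposal is correct and follows essentially the same route as the paper: both arguments hinge on passing to the closure of the quadratic form of $\Delta-q$ (the paper phrases this via the square root $H^{\frac12}$ and its closedness, you via the equivalent lower semicontinuity/completeness of a closed non-negative form), after identifying $W^{1,2}_{\mathrm{comp}}(N,g)$ inside the form domain with the expected formula for the form values. The only cosmetic difference is the endgame: the paper concludes directly from $\ker(H^{\frac12})=\ker(H)$, whereas you go through Cauchy--Schwarz for the form, the distributional equation, elliptic regularity and essential self-adjointness -- all steps that are available and correctly used.
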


\begin{proof}
Let $Q_{\Delta-q}$ be the symmetric bilinear form associated to $\Delta-q$, that is 
$Q_{\Delta-q}:C^{\infty}_c(N)\times C^{\infty}_c(N)\rightarrow \mathbb{R}$ and $$Q(\chi,\varphi)=\langle\Delta\chi-q\chi,\varphi\rangle_{L^2(N,g)}=\int_N\left(g(d\chi,d\varphi)-q\chi\varphi\right)\dvol_g.$$
Let $\overline{Q}_{\Delta-q}$ be the closure of $Q_{\Delta-q}$. We recall that $\overline{Q}_{\Delta-q}$ can be described as follows. Let $H:L^2(N,g)\rightarrow L^2(N,g)$ be the closed operator defined as the $L^2$-graph closure of $\Delta-q:C^{\infty}_c(N)\rightarrow L^2(N,g)$.
Since  $(N,g)$ is complete and $\Delta-q$ is  non-negative and formally self-adjoint on $C^{\infty}_c(N)$, we know that $\Delta-q:L^2(N,g)\rightarrow L^2(N,g)$, with initial domain $C^{\infty}_c(N)$, is essentially self-adjoint in $L^2(N,g)$, see e.g. \cite[Th. 3.13]{Pigola}. Therefore $H$ is the unique $L^2$-closed extension of $\Delta-q:C^{\infty}_c(N)\rightarrow L^2(N,g)$. In particular $H$ is a non-negative self-adjoint operator. Let now $H^{\frac{1}{2}}$ be the square root of $H$. We recall that $H^{\frac{1}{2}}:L^2(N,g)\rightarrow L^2(N,g)$ is the unique self-adjoint and non-negative operator such that $(H^{\frac{1}{2}})^2=H$. In particular $\mathrm{Dom}(H)$, the domain of $H:L^2(N,g)\rightarrow L^2(N,g)$, can be described as $$\mathrm{Dom}(H)=\{f\in \mathrm{Dom}(H^{\frac{1}{2}}): H^{\frac{1}{2}}f\in \mathrm{Dom}(H^{\frac{1}{2}})\}\quad \text{and}\quad Hf=H^{\frac{1}{2}}(H^{\frac{1}{2}}(f)).$$ Note that in particular the above characterization implies 
\begin{equation}
\label{ker0.5}
\ker(H)=\ker(H^{\frac{1}{2}}).
\end{equation}
Then, going back to $\overline{Q}_{\Delta-q}$, we have $$\mathrm{Dom}(\overline{Q}_{\Delta-q})=\mathrm{Dom}(H^{\frac{1}{2}})\quad \text{and}\quad \overline{Q}_{\Delta-q}(u,v)=\langle H^{\frac{1}{2}}u,H^{\frac{1}{2}}v\rangle_{L^2(N,g)}$$ for every $u,v\in \mathrm{Dom}(H^{\frac{1}{2}})$, see e.g. \cite[p. 193]{Gun}. Let now $\gamma\in W^{1,2}_{\mathrm{comp}}(N,g)$. We want to show that $\gamma\in \mathrm{Dom}(\overline{Q}_{\Delta-q})$. To this aim it is enough to show the existence of a sequence $\{\gamma_j\}_{j\in \mathbb{N}}\subset C^{\infty}_c(N)$ such that $\gamma_j\rightarrow \gamma$ in $L^2(N,g)$, as $j\rightarrow \infty$ and $Q_{\Delta-q}(\gamma_j-\gamma_i,\gamma_j-\gamma_i)\rightarrow 0$ as $i,j\rightarrow +\infty$, see e.g. \cite[Prop. 10.1]{Konrad}. Let $U$ be an open neighbourhood of  $\mathrm{\supp}(\gamma)$ and let $\{\gamma_j\}_{j\in \mathbb{N}}\subset C_c^{\infty}(U)$ be a sequence such that $\gamma_j\rightarrow \gamma$ in $L^2(N,g)$ as $j\rightarrow +\infty$, $d\gamma_j\rightarrow \overline{d}\gamma$ in $L^2\Omega^1(N,g)$ as $j\rightarrow +\infty$. Then, arguing as in the proof of Lemma \ref{W1}, we have  $$\lim_{i,j\rightarrow +\infty}\int_N\left(|d\gamma_i-d\gamma_j|_g^2-q|\gamma_i-\gamma_j|^2\right)\dvol_g=0$$ and so we can conclude that $\gamma\in \mathrm{Dom}(\overline{Q}_{\Delta-q})$ and 
$$\overline{Q}_{\Delta-q}(\gamma,\gamma)=\int_N\left(|\overline{d}\gamma|^2_{\gamma}-q\gamma^2\right)\dvol_g$$ for each $\gamma\in W^{1,2}_{\text{com}}(N,g)$.
Now let's go back to the sequence $\{\psi_j\}_{j\in \mathbb{N}}$. We know that $\psi_j\in \mathrm{Dom}(\overline{Q}_{\Delta-q})$, $\psi_j\rightarrow \psi$ in $L^2(N,g)$ as $j\rightarrow +\infty$ and $\overline{Q}_{\Delta-q}(\psi_j,\psi_j)\rightarrow 0$ as $j\rightarrow +\infty$. We can rephrase these three properties by saying that $\psi_j\in \mathrm{Dom}(H^{\frac{1}{2}})$, $\psi_j\rightarrow \psi$ in $L^2(N,g)$ as $j\rightarrow +\infty$ and $H^{\frac{1}{2}}\psi_j\rightarrow 0$ in $L^2(N,g)$ as $j\rightarrow +\infty$. Since $H^{\frac{1}{2}}$ is a closed operator this amounts to saying that $\psi\in \ker(H^{\frac{1}{2}})$ and finally, thanks to \eqref{ker0.5}, we can conclude that $\psi\in \ker(H)$, as required. To conclude the proof we note that since $\Delta-q$ is elliptic, we have $\psi\in C^{\infty}(N)$ and thus $H\psi=\Delta\psi-q\psi=0$.
\end{proof}

We can now prove Th. \ref{finiteness}.

\begin{proof}
As a first step to show that $P:C_c^{\infty}(N,E)\rightarrow L^2(N,E,g,\rho)$ is essentially self-adjoint we want to prove that there exists a constant $c\in \mathbb{R}$ such that $$\langle Ps,s\rangle_{L^2(N,E,g,\rho)}\geq c\|s\|^2_{L^2(N,E,g,\rho)}$$ for each $s\in C_c^{\infty}(N,E)$. So let $s\in C_c^{\infty}(N,E)$. We have 
$$
\begin{aligned}
\langle Ps,s\rangle_{L^2(N,E,g,\rho)}&=\int_N\rho(Ps,s)\dvol_g\\
&=\int_N \left(|\nabla s|^2_{g\otimes \rho}+\rho(Ls,s)\right) \dvol_g\\
(\text{by Kato's inequality}) &\geq \int_N \left(|\overline{d}|s|_{\rho}|^2_{g}+\rho(Ls,s)\right) \dvol_g\\
&=\int_N \left(|\overline{d}|s|_{\rho}|^2_{g}-q|s|^2_{\rho}+q|s|^2_{\rho}+\rho(Ls,s)\right) \dvol_g\\
&=\int_N \left(|\overline{d}|s|_{\rho}|^2_{g}-q|s|^2_{\rho}+\rho((L+q)s,s)\right) \dvol_g\\
&\geq0.
\end{aligned}
$$
Note that the last inequality follows by Lemma \ref{W1} and the fact that $L+q\geq 0$. Now let us consider the operator $\hat{P}:=P+\mathrm{Id}$, with $\mathrm{Id}:C^{\infty}(N,E)\rightarrow C^{\infty}(N,E)$ the identity. It is clear that $P$ is essentially self-adjoint if and only $\hat{P}$ is so. Since $\hat{P}$ is elliptic, symmetric and satisfies the inequality $$\langle \hat{P}s,s\rangle_{L^2(N,E,g,\rho)}\geq \|s\|^2_{L^2(N,E,g,\rho)}$$ for each $s\in C_c^{\infty}(N,E)$, in order to conclude that $\hat{P}:C_c^{\infty}(N,E)\rightarrow L^2(N,E,g,\rho)$ is essentially self-adjoint in $L^2(N,E,g,\rho)$, it is enough to show that $\ker(\hat{P})\cap L^2(N,E,g,\rho)=\{0\}$, see e.g. \cite[p. 136-137]{RS2}. To show that $\ker(\hat{P})\cap L^2(N,E,g,\rho)$ is trivial we adapt to our context an argument that is well known in the scalar case, see e.g. \cite[Th. 3.13]{Pigola}. Let $s\in \ker(\hat{P})\cap L^2(N,E,g,\rho)$, let $\{V_n\}_{n\in \mathbb{N}}$ be an exhaustion of $N$ given by relatively compact open subset $V_n$ and let $\{\phi_n\}_{n\in \mathbb{N}}\subset C^{\infty}_c(N)$ be a sequence of smooth functions with compact support, such that 
\begin{enumerate}
\item $0\leq \phi_n\leq 1$
\item $\phi_n|_{V_n}\equiv 1$
\item $\|d\phi_n\|_{L^{\infty}\Omega^1(N,g)}\rightarrow 0$ as $n\rightarrow +\infty$.
\end{enumerate}
Since $(N,g)$ is complete it is well known that such a sequence exists. We then have 
$$
\begin{aligned}
0=\langle \hat{P}s,\phi_n^2s\rangle_{L^2(N,E,g,\rho)}&=\int_N \left(\langle\nabla s,\nabla(\phi_n^2s)\rangle_{g\otimes \rho}+\rho(Ls+s,\phi^2_ns)\right) \dvol_g\\
&= \int_N \left(\langle\nabla s,2\phi_nd\phi_n\otimes  s+\phi_n^2\nabla s \rangle_{g\otimes \rho}+\rho(Ls+s,\phi_n^2s)\right) \dvol_g\\
&=\int_N \left(\langle\phi_n\nabla s +d\phi_n\otimes s,d\phi_n\otimes s+\phi_n \nabla s \rangle_{g\otimes \rho}-|d\phi_n\otimes s|^2_{g\otimes \rho}+\rho(Ls+s,\phi_n^2s)\right) \dvol_g\\
&=\int_N \left(\langle\nabla(\phi_n s) , \nabla(\phi_n s) \rangle_{g\otimes \rho}-|d\phi_n\otimes s|^2_{g\otimes \rho}+\rho(Ls+s,\phi_n^2s)\right) \dvol_g\\
&=\int_N \left(\hat{P}(\phi_n s) , \phi_n s \rangle_{g\otimes \rho}-|d\phi_n|^2_g|s|^2_{\rho}\right) \dvol_g\\
&\geq \int_N \phi_n^2 |s|^2_h\dvol_g-\int_N |d\phi_n|^2_g|s|^2_{\rho} \dvol_g.
\end{aligned}
$$
Letting $n\rightarrow +\infty$ we obtain $s=0$. The first statement of theorem is thus proved. Let us now consider $\ker(P)\cap L^2(N,E,g,\rho)$. Let $s\in \ker(P)\cap L^2(N,E,g,\rho)$. Since $P$ is essentially self-adjoint there exists $\{s_j\}_{j\in \mathbb{N}}\subset C_c^{\infty}(N,E)$ such that $s_j\rightarrow s$  and $Ps_j \rightarrow 0$ both in $L^2(N,E,g,\rho)$  as $j\rightarrow \infty$. We then have 
$$
\begin{aligned}
0=\langle Ps,s\rangle_{L^2(M,E,g,\rho)}&=\lim_{j\rightarrow +\infty}\langle Ps_j,s_j\rangle_{L^2(M,E,g,\rho)}\\
&=\lim_{j\rightarrow +\infty}\int_N \left(|\nabla s_j|^2_{g\otimes \rho}+\rho(Ls_j,s_j)\right) \dvol_g\\
 &\geq \lim_{j\rightarrow +\infty}\int_N \left(|\overline{d}|s_j|_{\rho}|^2_{g}+\rho(Ls_j,s_j)\right) \dvol_g\\
&=\lim_{j\rightarrow +\infty}\int_N \left(|\overline{d}|s_j|_{\rho}|^2_{g}-q|s_j|^2_{\rho}+q|s_j|^2_{\rho}+\rho(Ls_s,s_j)\right) \dvol_g\\
&=\lim_{j\rightarrow +\infty}\int_N \left(|\overline{d}|s_j|_{\rho}|^2_{g}-q|s_j|^2_{\rho}+\rho((L+q)s_j,s_j)\right) \dvol_g.
\end{aligned}
$$
In this way we get 
$$
\lim_{j\rightarrow +\infty}\int_N \left(|\overline{d}|s_j|_{\rho}|^2_{g}-q|s_j|^2_{\rho}\right)\dvol_g=0\quad \text{and}\quad \lim_{j\rightarrow +\infty}\int_N \left(\rho((L+q)s_j,s_j)\right) \dvol_g=0.
$$
In particular for the sequence $\{|s_j|_{\rho}\}_{j\in \mathbb{N}}$ we have 
\begin{equation}
\label{null}
|s_j|_{\rho}\in W^{1,2}_{\text{comp}}(N,g),\quad \lim_{j\rightarrow +\infty}\||s|_{\rho}-|s_j|_{\rho}\|_{L^2(N,g)}=0\quad \text{and}\quad \lim_{j\rightarrow +\infty}\int_N \left(|\overline{d}|s_j|_{\rho}|^2_{g}-q|s_j|^2_{\rho}\right)\dvol_g=0.
\end{equation}
According to Lemma \ref{solution} we know that \eqref{null} implies that $|s|_{\rho}\in \ker(\Delta-q)$ and since $\Delta-q$ is an elliptic operator we deduce that $|s|_{\rho}$ is smooth on $N$. Assume now that $q\geq 0$ on $N$. Then $\Delta |s|_{\rho}=q|s|_{\rho}\geq 0$ and so by the maximum principle applied to $\Delta$ and $|s|_g$, see e.g. \cite[Th. A.2]{Lee}, we get that either  $s\equiv 0$ on $N$ or $s$ is nowhere vanishing on $N$. Let us conclude now by showing that $\ker(P)\cap L^2(N,E)$ is finite dimensional with $\dim(\ker(P)\cap L^2(N,E))\leq \mathrm{rnk}(E)$. Let $s_1,...,s_q\in \ker(P)\cap L^2(N,E)$ with $q>\mathrm{rnk}(E)$ and let $x\in N$ be an arbitrarily fixed point. Then there exists $\lambda_1,...,\lambda_q\in \mathbb{R}$  (or $\mathbb{C}$ if $E$ is a complex vector bundle) with $(\lambda_1,...,\lambda_{q})\neq (0,...,0)$ such that $\lambda_1s_1(x)+...+\lambda_qs_q(x)=0$. Since $\lambda_1s_1+...+\lambda_qs_q\in \ker(P)\cap L^2(N,E)$ we can conclude that  $\lambda_1s_1+...+\lambda_qs_q$ is identically zero on $N$ as $\lambda_1s_1+...+\lambda_qs_q\in \ker(P)$  and vanishes on $x$. 
\end{proof}

\begin{cor}
In the setting of Th. \ref{finiteness} assume additionally that  $q\geq 0$.  If $\dim(\ker(P)\cap L^2(N,E,g,\rho))\geq 1$ then for any non-trivial $s,z\in \ker(P)\cap L^2(N,E,g,\rho))$ there exists $\lambda\in \mathbb{R}$, $\lambda>0$ such that $|s|_{\rho}=\lambda|z|_{\rho}$.
\end{cor}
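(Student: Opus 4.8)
The plan is to reduce the statement to the rank-one instance of Theorem \ref{finiteness} applied to the scalar operator $\Delta-q$ itself. First I recall what the proof of Theorem \ref{finiteness} already delivers: since $q\geq 0$, for every non-trivial $s\in\ker(P)\cap L^2(N,E,g,\rho)$ the function $|s|_\rho$ is smooth, lies in $\ker(\Delta-q)\cap L^2(N,g)$, and is nowhere vanishing on $N$, hence strictly positive (for $|s|_\rho\geq 0$). The same applies to $z$, so $u:=|s|_\rho$ and $v:=|z|_\rho$ are two strictly positive elements of $\ker(\Delta-q)\cap L^2(N,g)$.

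Next I would observe that $\ker(\Delta-q)\cap L^2(N,g)$ is at most one-dimensional, and that this is nothing but Theorem \ref{finiteness} itself, applied to the trivial real line bundle $E:=N\times\mathbb{R}$ with its standard fibre metric $\rho_{\mathrm{std}}$, to the trivial connection $\nabla:=d$ (so that $\nabla^t\circ\nabla=d^t\circ d=\Delta$), and to the fibrewise self-adjoint endomorphism $L:=-q\in C^\infty(N,\End(E))$ acting by multiplication. One only has to check the hypotheses: $L+q=0\geq 0$ holds trivially, the inequality $\langle(\Delta-q)\phi,\phi\rangle_{L^2(N,g)}\geq 0$ for all $\phi\in C^\infty_c(N,\mathbb{R})$ is exactly the standing assumption \eqref{1sign}, and $q\geq 0$ is assumed. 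With $P=\nabla^t\circ\nabla+L=\Delta-q$ in this case, \eqref{xxx} gives $\dim\big(\ker(\Delta-q)\cap L^2(N,g)\big)\leq \mathrm{rnk}(N\times\mathbb{R})=1$.

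Finally, since $u$ and $v$ are both non-trivial members of an at most one-dimensional vector space, they are proportional, say $u=\lambda v$ with $\lambda\in\mathbb{R}\setminus\{0\}$; as $u$ and $v$ are non-negative and not identically zero, necessarily $\lambda>0$. This is precisely $|s|_\rho=\lambda|z|_\rho$ with $\lambda>0$. I do not expect a real obstacle in this argument: the only genuine content is recognizing that Theorem \ref{finiteness} may be fed back its own scalar operator, after which the proportionality of the norms is immediate; the minor points requiring care are the routine verification that $(N\times\mathbb{R},d,-q)$ satisfies the hypotheses of Theorem \ref{finiteness}, and the fact—already established inside the proof of that theorem—that $|s|_\rho$ and $|z|_\rho$ are smooth, $L^2$, and strictly positive.
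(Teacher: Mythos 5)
Your argument is correct and is essentially the paper's own proof: the paper likewise applies Theorem \ref{finiteness} to the scalar operator $\Delta-q$ (with $L=-q$) to get $\dim\bigl(\ker(\Delta-q)\cap L^2(N,g)\bigr)\leq 1$, and then invokes the fact, established in the proof of that theorem, that $|s|_{\rho}$ and $|z|_{\rho}$ are nowhere-vanishing elements of this kernel, whence they are positive proportional. Your verification of the hypotheses for the trivial line bundle is the only step the paper leaves implicit; there is nothing to add.
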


\begin{proof}
Let $s$, $z$ be non-trivial elements in $\ker(P)\cap L^2(N,E,g,\rho)$. Note that Th. \ref{finiteness} applies in particular to $P=\Delta-q$ with $L=-q$ and thus we have $\dim(\ker(\Delta-q)\cap L^2(N,g))\leq 1$. Therefore, according to the above proof, $|s|_{\rho}$ and $|z|_{\rho}$ are nowhere zero functions lying in $\ker(\Delta-q)\cap L^2(N,g)$ and so there must exists $\lambda\in \mathbb{R}$, $\lambda>0$ such that $|s|_{\rho}=\lambda|z|_{\rho}$, as required.
\end{proof}

\begin{cor}
\label{zerozero} In the setting of Th. \ref{finiteness} assume in addition that $\ker(\Delta-q)\cap L^2(N,g)=\{0\}$. Then $\ker(P)\cap L^2(N,E,g,\rho)=\{0\}$.
\end{cor}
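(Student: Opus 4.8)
The plan is to observe that the bulk of the proof of Th.~\ref{finiteness} already produces, for every $s\in\ker(P)\cap L^2(N,E,g,\rho)$, an element of $\ker(\Delta-q)\cap L^2(N,g)$, namely the pointwise norm $|s|_\rho$; once this is granted the corollary is immediate, since by hypothesis that space is trivial, forcing $|s|_\rho\equiv 0$ and hence $s\equiv 0$. So the work consists entirely in isolating the part of the argument in Th.~\ref{finiteness} that does not use the extra assumption $q\geq 0$ (which there was needed only for the maximum-principle step giving the ``nowhere vanishing'' conclusion).

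Concretely, I would proceed as follows. First, the hypothesis \eqref{1sign} alone guarantees, by the first part of Th.~\ref{finiteness}, that $P:C^\infty_c(N,E)\rightarrow L^2(N,E,g,\rho)$ is essentially self-adjoint; hence for $s\in\ker(P)\cap L^2(N,E,g,\rho)$ there is a sequence $\{s_j\}_{j\in\mathbb{N}}\subset C^\infty_c(N,E)$ with $s_j\rightarrow s$ and $Ps_j\rightarrow 0$ in $L^2(N,E,g,\rho)$. Next, repeating verbatim the chain of equalities and the Kato inequality used in the proof of Th.~\ref{finiteness} to expand $\langle Ps_j,s_j\rangle_{L^2(N,E,g,\rho)}$, together with $L+q\geq 0$ and Lemma~\ref{W1}, one obtains exactly the conclusion \eqref{null}, i.e. $|s_j|_\rho\in W^{1,2}_{\mathrm{comp}}(N,g)$, $\||s|_\rho-|s_j|_\rho\|_{L^2(N,g)}\rightarrow 0$, and $\int_N\bigl(|\overline{d}|s_j|_\rho|^2_g-q|s_j|^2_\rho\bigr)\dvol_g\rightarrow 0$. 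Then Lemma~\ref{solution}, applied with $\psi=|s|_\rho$ and $\psi_j=|s_j|_\rho$, yields $|s|_\rho\in\ker(\Delta-q)\cap L^2(N,g)$. Finally, invoking the additional assumption $\ker(\Delta-q)\cap L^2(N,g)=\{0\}$ gives $|s|_\rho\equiv 0$ on $N$, hence $s\equiv 0$, which proves $\ker(P)\cap L^2(N,E,g,\rho)=\{0\}$.

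There is essentially no analytic obstacle here beyond what has already been established: the only point requiring a moment's care is the bookkeeping check that the passage from $s$ to the sequence $\{|s_j|_\rho\}$ and the application of Lemma~\ref{solution} rely solely on completeness of $(N,g)$ and on \eqref{1sign}, and in particular not on the sign of $q$, so that the conclusion $|s|_\rho\in\ker(\Delta-q)\cap L^2(N,g)$ is available in the present generality. Everything else is a direct citation of Th.~\ref{finiteness}, Lemma~\ref{W1} and Lemma~\ref{solution}.
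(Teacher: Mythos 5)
Your proposal is correct and follows exactly the paper's own route: the proof of Th.~\ref{finiteness} already shows (without using $q\geq 0$) that $|s|_{\rho}\in\ker(\Delta-q)\cap L^2(N,g)$ for every $s\in\ker(P)\cap L^2(N,E,g,\rho)$, and the triviality hypothesis then forces $s=0$. Your extra care in checking that the sign of $q$ is not needed for that step is exactly the right (and only) point to verify.
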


\begin{proof}
According to the proof of Th. \ref{finiteness} we know that if $s\in \ker(P)\cap L^2(N,E,g,\rho)$ then $|s|_{\rho}\in \ker(\Delta-q)\cap L^2(N,g)$. Now the conclusion follows immediately.
\end{proof}

\begin{rem}
\label{yyy}
Clearly the conclusion \eqref{xxx} holds also if we assume $q\leq 0$. Indeed this assumption forces $L$ to be non-negative and thus every section in $\ker(P)\cap L^2(N,E,g,\rho)$ is parallel and lies in $W^{1,2}(M,E,g,\rho)$, see \cite[Th. 1]{Dod}.
\end{rem}
We consider now the case of a Dirac operator. As we will see in a moment, in this setting we have a stronger result. For the sake of brevity we recall only what is strictly necessary for our purpose and we invite the unfamiliar reader to consult excellent monographs on this topic such as \cite{Lawson}.
Let $(N,g)$ be a Riemannian manifold endowed with a Clifford module $(E,\rho)\rightarrow N$. Let $\nabla:C^{\infty}(N,E)\rightarrow C^{\infty}(N,E\otimes T^*N)$ be a Clifford connection and let  $D:C^{\infty}(N,E)\rightarrow C^{\infty}(N,E)$ be the corresponding Dirac operator. We recall that $D$ is a formally self-adjoint first order elliptic differential operator. Moreover its square is a Schr\"oedinger-type operator. More precisely, thanks to the Weitzenb\"ock formula for Dirac operators, see e.g. \cite[Th. 8.2]{Lawson},   we know that there exists $L\in C^{\infty}(N,\mathrm{End}(E))$ such that $$\Delta_D=\nabla^t\circ \nabla +L$$ with $\Delta_{D}=D^2$. As in Th. \ref{finiteness}, let  $\Delta:C^{\infty}(N,g)\rightarrow C^{\infty}(N,g)$ be the Laplace-Beltrami operator on $(N,g)$ and let $q\in C^{\infty}(N,\mathbb{R})$.

\begin{teo}
\label{c-length}
In the above setting assume that  $(N,g)$ is complete and that 
\begin{equation}
\label{sign}
L+q\geq 0\quad \mathrm{and}\quad\quad \langle(\Delta-q)\phi,\phi\rangle_{L^2(N,g)}\geq 0
\end{equation}
for every $\phi\in C^{\infty}_c(N)$. Then every $s\in \ker(D)\cap L^2(N,E,g,\rho)$ has constant length. Consequently $\ker(D)\cap L^2(N,E,g,\rho)$ is finite dimensional and satisfies $$\dim(\ker(D)\cap L^2(N,E,g,\rho))\leq \mathrm{rnk}(E).$$
\end{teo}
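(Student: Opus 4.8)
The plan is to use the Weitzenb\"ock formula to reduce the statement to Theorem~\ref{finiteness} applied to the Schr\"odinger operator $P:=\nabla^t\nabla+L=\Delta_D$, and then to upgrade the information on $|s|_\rho$ coming out of that proof --- by means of Kato's inequality together with the invertibility of Clifford multiplication --- to the conclusion that every $L^2$-harmonic spinor is \emph{parallel}. Concretely, let $s\in\ker(D)\cap L^2(N,E,g,\rho)$; by elliptic regularity $s\in C^\infty(N,E)$ and $D^2s=0$, so $Ps=\nabla^t\nabla s+Ls=\Delta_D s=0$. Since the hypotheses \eqref{sign} are exactly the hypotheses \eqref{1sign} of Theorem~\ref{finiteness} for $P$, that theorem shows $P$ is essentially self-adjoint and $s\in\ker(P)\cap L^2(N,E,g,\rho)$. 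Running the argument in the proof of Theorem~\ref{finiteness} for this $s$ --- i.e. choosing $\{s_j\}_{j\in\mathbb N}\subset C^\infty_c(N,E)$ with $s_j\to s$ and $Ps_j\to 0$ in $L^2(N,E,g,\rho)$ and applying Lemma~\ref{solution} to $\{|s_j|_\rho\}_{j\in\mathbb N}$ --- shows that $|s|_\rho$ is a smooth $L^2$ solution of $(\Delta-q)|s|_\rho=0$ on $N$; note that this part requires no sign assumption on $q$.

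Next I would combine a Bochner identity with Kato's inequality. Since $\nabla^t\nabla s=\Delta_D s-Ls=-Ls$, one has on all of $N$
$$\Delta|s|_\rho^2=2\,\rho(\nabla^t\nabla s,s)-2\,|\nabla s|^2_{g\otimes\rho}=-2\,\rho(Ls,s)-2\,|\nabla s|^2_{g\otimes\rho},$$
while, $|s|_\rho$ being smooth with $\Delta|s|_\rho=q|s|_\rho$, the Leibniz rule gives $\Delta|s|_\rho^2=2|s|_\rho\,\Delta|s|_\rho-2\,|d|s|_\rho|^2_g=2q|s|_\rho^2-2\,|d|s|_\rho|^2_g$. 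Equating the two expressions yields
$$\rho((L+q)s,s)=|d|s|_\rho|^2_g-|\nabla s|^2_{g\otimes\rho}\leq 0,$$
the inequality being Kato's. Since $L+q\geq 0$ the left-hand side is $\geq 0$, so both sides vanish identically; in particular Kato's inequality $|d|s|_\rho|^2_g=|\nabla s|^2_{g\otimes\rho}$ holds at every point of $N$.

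Finally I would exploit the Clifford structure. At a point where $s\neq 0$, equality in Kato's inequality forces $\nabla_X s=\eta(X)\,s$ for all $X$, where $\eta:=d\log|s|_\rho$ is a real $1$-form; applying $D$ gives $0=Ds=\eta\cdot s$, and Clifford-multiplying once more by $\eta$ and using $\eta\cdot\eta\cdot s=-|\eta|^2_g\,s$ we obtain $|\eta|^2_g\,s=0$, hence $d|s|_\rho=0$ and $\nabla s=0$ there. At a point where $s=0$, the nonnegative smooth function $|s|_\rho$ attains its minimum, so $d|s|_\rho=0$, and Kato equality forces $\nabla s=0$ there as well. Thus $\nabla s\equiv 0$, so $s$ is parallel and $|s|_\rho$ is constant. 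The dimension bound then follows as in Theorem~\ref{finiteness}: if $s_1,\dots,s_k\in\ker(D)\cap L^2(N,E,g,\rho)$ with $k>\mathrm{rnk}(E)$, then at any fixed $x\in N$ the vectors $s_1(x),\dots,s_k(x)$ are linearly dependent in $E_x$, so a nontrivial combination $\sum_i\lambda_is_i$ lies in $\ker(D)\cap L^2(N,E,g,\rho)$, has constant length, and vanishes at $x$, hence is identically zero. The main obstacle is precisely this last step: it is the invertibility of Clifford multiplication by a nonzero cotangent vector that turns the (a priori weaker) Kato equality $\nabla_Xs=\eta(X)s$ into $\nabla s=0$, and hence gives a conclusion strictly stronger than the one of Theorem~\ref{finiteness}; everything else is bookkeeping layered on top of that theorem.
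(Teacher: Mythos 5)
Your proof is correct, and it takes a genuinely different route from the paper's. The paper never passes through the scalar equation $\Delta|s|_\rho=q|s|_\rho$: it reruns the cutoff computation of Theorem \ref{finiteness} for $\Delta_D$ and, on each relatively compact $V_k$, replaces the classical Kato inequality by the \emph{refined} Kato inequality of \cite{Refined}, $|\nabla s|_{\rho\otimes g}\geq(1+c)\,|d|s|_\rho|_g$ with $c>0$ for $s\in\ker D$; the surplus term $c\int_{V_k}|d|s|_\rho|_g^2$ is then squeezed to zero between the two nonnegative quantities coming from \eqref{sign}, giving $|s|_\rho$ locally constant. You instead reuse the machinery of Theorem \ref{finiteness} verbatim to obtain that $|s|_\rho$ is smooth with $\Delta|s|_\rho=q|s|_\rho$ (legitimate: the hypotheses coincide, $\Delta_D=\nabla^t\nabla+L$ is a Schr\"odinger operator of the required form, and the paper itself records exactly this consequence in the corollary following Theorem \ref{finiteness}), derive the pointwise identity $\rho((L+q)s,s)=|d|s|_\rho|^2_g-|\nabla s|^2_{g\otimes\rho}$ by comparing the Bochner formula with the product rule, and conclude that the classical Kato inequality is an equality everywhere; the rigidity of the equality case --- $\nabla_Xs=\eta(X)s$ with $\eta$ real, hence $0=Ds=\eta^\sharp\cdot s$, and invertibility of Clifford multiplication by a nonzero vector --- then forces $\nabla s\equiv 0$, with the zero set of $s$ handled by minimality of $|s|_\rho$ there. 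Your version avoids importing the refined Kato inequality (you in effect prove, by hand, the only consequence of it that is needed here) and yields the strictly stronger conclusion that every $L^2$-harmonic section is \emph{parallel}, not merely of constant length, which would sharpen several downstream statements such as Theorem \ref{spin}. The paper's version is shorter once the refined inequality is granted and needs neither the smoothness of $|s|_\rho$ nor the equality analysis; both arguments rest on the same global ingredient, namely the cutoff and essential self-adjointness argument of Theorem \ref{finiteness}.
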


\begin{proof}
Let $\{V_n\}_{n\in \mathbb{N}}$ be an exhaustion of $N$ given by relatively compact open subset $V_n$. As in the proof of Th. \ref{finiteness} let us consider a sequence of smooth functions with compact support, $\{\phi_n\}_{n\in \mathbb{N}}\subset C^{\infty}_c(N)$, such that 
\begin{enumerate}
\item $0\leq \phi_n\leq 1$
\item $\phi_n|_{V_n}\equiv 1$
\item $\|d\phi_n\|_{L^{\infty}\Omega^1(N,g)}\rightarrow 0$ as $n\rightarrow +\infty$.
\end{enumerate}
Let now $s\in \ker(D)\cap L^2(N,E,g,\rho)$ and $V_k\in \{V_n\}_{n\in \mathbb{N}}$ be arbitrarily fixed. We have 
$$
\begin{aligned}
0&=\langle Ds,Ds\rangle_{L^2(N,E,g,\rho)}=\lim_{n\rightarrow \infty}\langle D(\phi_ns),D(\phi_ns)\rangle_{L^2(N,E,g,\rho)}\\
&=\lim_{n\rightarrow \infty}\langle\Delta_D(\phi_ns),\phi_ns\rangle_{L^2(N,E,g,\rho)}=\lim_{n\rightarrow \infty}\langle(\nabla^t\circ \nabla+L)(\phi_ns),\phi_ns\rangle_{L^2(N,E,g,\rho)}\\
&=\lim_{n\rightarrow \infty}\int_N\langle(\nabla^t\circ \nabla+L)(\phi_ns),\phi_ns\rangle_{\rho}\dvol_g=\lim_{n\rightarrow \infty}\int_N| \nabla(\phi_ns)|_{\rho\otimes g}^2+\phi_n^2\rho(Ls,s)\dvol_g\\
&= \lim_{n\rightarrow \infty}\left(\int_{N\setminus V_k}| \nabla(\phi_ns)|_{\rho\otimes g}^2+\phi_n^2\rho(Ls,s)\dvol_g+\int_{V_k}| \nabla s|_{\rho}^2+\rho(Ls,s)\dvol_g\right)
\end{aligned}
$$

Note that on $V_k$ we can use the refined Kato inequality \cite{Refined}: there exists a constant $c>0$ (possibly depending also on $V_k$) such that $|\nabla s|_{\rho\otimes g}\geq (1+c)|d|s|_{\rho}|_g$ pointwise over $V_k$. Hence, by using the classical Kato inequality on $N\setminus V_k$ we get
$$
\begin{aligned}
0&\geq \lim_{n\rightarrow \infty}\left(\int_{N\setminus V_k}| \nabla(\phi_ns)|_{\rho\otimes g}^2+\phi_n^2\rho(Ls,s)\dvol_g+\int_{V_k}| \nabla s|_{\rho}^2+\rho(Ls,s)\dvol_g\right)\\
& \geq\lim_{n\rightarrow \infty}\left(\int_{N\setminus V_k}|\overline{d}|\phi_ns|_{\rho}|_g^2+\phi_n^2\rho(Ls,s)\dvol_g+\int_{V_k}(1+c)^2|\overline{d}|s|_{\rho}|_g^2+\rho(Ls,s)\dvol_g\right)\\
& \geq\lim_{n\rightarrow \infty}\left(\int_{N\setminus V_k}|\overline{d}|\phi_ns|_{\rho}|_g^2+\phi_n^2\rho(Ls,s)\dvol_g+\int_{V_k}(1+c)|\overline{d}|s|_{\rho}|_g^2+\rho(Ls,s)\dvol_g\right)\\
& \geq \lim_{n\rightarrow \infty}\left(\int_{N\setminus V_k}|\overline{d}|\phi_ns|_{\rho}|_g^2+\phi_n^2\rho(Ls,s)\dvol_g+\int_{V_k}c|\overline{d}|s|_{\rho}|_g^2\dvol_g+\int_{V_k}|\overline{d}|\phi_ns|_{\rho}|_g^2\dvol_g+\int_{V_k}\phi_n^2\rho(Ls,s)\dvol_g\right)\\
&=\lim_{n\rightarrow \infty}\left(\int_{N}|\overline{d}|\phi_ns|_{\rho}|_g^2+\rho(L(\phi_ns),\phi_ns)\dvol_g+\int_{V_k}c|\overline{d}|s|_{\rho}|_g^2\dvol_g\right)\\
&=\lim_{n\rightarrow \infty}\left(\int_{N}|\overline{d}|\phi_ns|_{\rho}|_g^2+\rho(L(\phi_ns)-q\phi_ns+q\phi_ns,\phi_ns)\dvol_g+\int_{V_k}c|\overline{d}|s|_{\rho}|_g^2\dvol_g\right)\\
&=\lim_{n\rightarrow \infty}\left(\int_{N}|\overline{d}|\phi_ns|_{\rho}|_g^2-q|\phi_ns|^2_{\rho}+\rho(L(\phi_ns)+q\phi_ns,\phi_ns)\dvol_g+\int_{V_k}c|\overline{d}|s|_{\rho}|_g^2\dvol_g\right)
\end{aligned}
$$

By \eqref{sign} we know that 
\begin{equation}
\label{non-neg} \int_{N}(|\overline{d}|\phi_ns|_{\rho}|_g^2-q|\phi_ns|^2_{\rho})\dvol_g\geq 0\quad\quad \mathrm{and}\quad\quad \int_N(\rho(L(\phi_ns)+q\phi_ns,\phi_ns))\dvol_g\geq 0.
\end{equation}

Hence we have 

$$0\geq \lim_{n\rightarrow \infty}\left(\int_{N}|\overline{d}|\phi_ns|_{\rho}|_g^2-q|\phi_ns|^2_{\rho}+\rho(L(\phi_ns)+q\phi_ns,\phi_ns)\dvol_g+\int_{V_k}c|\overline{d}|s|_{\rho}|_g^2\dvol_g\right)\geq 0$$ and thus, using \eqref{non-neg}
we can conclude that 
\begin{equation}
\label{zerozeroo}
\lim_{n\rightarrow \infty}\int_{N}(|\overline{d}|\phi_ns|_{\rho}|_g^2-q|\phi_ns|^2_{\rho})\dvol_g=0,\quad\quad \lim_{n\rightarrow \infty}\int_N(\rho(L(\phi_ns)+q\phi_ns,\phi_ns))\dvol_g=0
\end{equation}
and 
\begin{equation}\label{zerozero}
\int_{V_k}c|\overline{d}|s|_{\rho}|_g^2\dvol_g=0.
\end{equation}
We can thus deduce that $|s|_{\rho}$ is constant on $V_k$ and since $V_k$ is arbitrarily fixed and $|s|_{\rho}$ is continuous, we can conclude that $|s|_{\rho}$ is constant on $N$, as required. Finally arguing as in Th. \ref{finiteness} we get the desired upper bound on $\dim(\ker(D)\cap L^2(N,E,g,\rho))$. 
\end{proof}

We have the following consequences:

\begin{cor}
\label{immediately}
In the setting of Th. \ref{c-length}:
\begin{enumerate}
\item If $F$ is a subbundle of $E$ then $$\dim(\ker(D)\cap L^2(N,F,g,\rho))\leq \mathrm{rnk}(F).$$
\item If $(N,g)$ has infinite volume then $$\ker(D)\cap L^2(N,E,g,\rho)=\{0\}.$$
\item If $L+q$ is positive in (at least) one point then $$\ker(D)\cap L^2(N,E,g,\rho)=\{0\}.$$
\item If $q\geq 0$ and $q(x)>0$ for some $x\in N$ then $\vol_g(N)=+\infty$ and consequently  $$\ker(D)\cap L^2(N,E,g,\rho)=\{0\}.$$
\end{enumerate}
\end{cor}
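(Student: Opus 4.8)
The plan is to prove Corollary \ref{immediately} by reading off each point from Theorem \ref{c-length}, whose conclusion is that \emph{every} $s\in\ker(D)\cap L^2(N,E,g,\rho)$ has constant length. Since this is a corollary, I expect each point to be short; the only genuine work is assembling the right ingredients, and the main obstacle — such as it is — lies in point 4, where one has to turn a pointwise positivity of $q$ into an infinite-volume statement and then invoke point 2.

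\textbf{Point 1.} First I would observe that $\ker(D)\cap L^2(N,F,g,\rho)\subset\ker(D)\cap L^2(N,E,g,\rho)$, and that on $F$ we may run exactly the same dimension-counting argument used at the end of the proof of Theorem \ref{finiteness} (and again at the end of Theorem \ref{c-length}): given sections $s_1,\dots,s_q\in\ker(D)\cap L^2(N,F,g,\rho)$ with $q>\mathrm{rnk}(F)$ and a fixed point $x\in N$, linear dependence of $s_1(x),\dots,s_q(x)$ in the fibre $F_x$ produces a nontrivial combination $\sum\lambda_is_i\in\ker(D)$ vanishing at $x$; since it has constant length by Theorem \ref{c-length}, it vanishes identically. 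Hence $\dim(\ker(D)\cap L^2(N,F,g,\rho))\leq\mathrm{rnk}(F)$.

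\textbf{Points 2 and 3.} For point 2, if $s\in\ker(D)\cap L^2(N,E,g,\rho)$ then $|s|_\rho\equiv c$ is constant, so $\int_N|s|_\rho^2\,\dvol_g=c^2\vol_g(N)$; finiteness of the $L^2$-norm together with $\vol_g(N)=+\infty$ forces $c=0$, i.e. $s\equiv 0$. For point 3, I would go back into the proof of Theorem \ref{c-length}: from \eqref{zerozeroo} one has $\lim_{n}\int_N\rho((L+q)(\phi_ns)+ \text{\dots})$ — more precisely $\lim_n\int_N\rho((L+q)(\phi_ns),\phi_ns)\,\dvol_g=0$, with the integrand pointwise $\geq 0$ since $L+q\geq 0$. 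As $\phi_n\nearrow 1$ and $|s|_\rho$ is constant, if $L+q$ were positive at a point $x_0$ (hence, by continuity, on a neighbourhood $U$ of positive measure on which $\rho((L+q)s,s)\geq\epsilon|s|_\rho^2$), the limit would be at least $\epsilon\,c^2\,\vol_g(U)>0$ unless $c=0$; so $s\equiv 0$. Alternatively, and more cleanly, since $|s|_\rho$ is constant, $s$ is parallel (this is the standard consequence of the constant-length/Kato equality in the Bochner argument), whence $\rho((L+q)s,s)\geq 0$ everywhere and vanishes in the $L^2$-sense, forcing $s(x_0)=0$ and then $s\equiv 0$ by constant length. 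I would present whichever of these is shortest given what is cited.

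\textbf{Point 4.} Here I would first record the elementary Riemannian fact that a complete manifold admitting a nonconstant, nonnegative subharmonic... — more precisely, that $q\geq 0$ with $q(x)>0$ somewhere, together with the hypothesis $\langle(\Delta-q)\phi,\phi\rangle_{L^2}\geq 0$ for all $\phi\in C^\infty_c(N)$, forces $\vol_g(N)=+\infty$. Indeed, if $\vol_g(N)<+\infty$ then $1\in L^2(N,g)$ and, by completeness, one can approximate the constant function $1$ by the cutoffs $\phi_n$ of the proof of Theorem \ref{c-length} so that $\int_N|\overline d\phi_n|_g^2\to 0$ while $\int_N q\phi_n^2\,\dvol_g\to\int_N q\,\dvol_g>0$; this contradicts $\int_N(|\overline d\phi_n|_g^2-q\phi_n^2)\,\dvol_g\geq 0$ (Lemma \ref{W1}). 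Hence $\vol_g(N)=+\infty$, and the vanishing $\ker(D)\cap L^2(N,E,g,\rho)=\{0\}$ follows immediately from point 2. The only subtlety to be careful about is justifying $\int_N q\phi_n^2\,\dvol_g\to\int_N q\,\dvol_g$ when $q$ is merely smooth and possibly unbounded; but since $q\geq 0$ this follows from the monotone convergence theorem applied to $q\phi_n^2\nearrow q$ (choosing the $\phi_n$ monotone increasing, which the standard construction allows), so no integrability of $q$ need be assumed. This last point is the place where I expect to have to be slightly more careful than elsewhere.
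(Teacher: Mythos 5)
Your proposal is correct and follows essentially the same route as the paper: points 1 and 2 are the same immediate consequences of the constant-length conclusion, point 3 extracts the vanishing of $\lim_n\int_N\rho((L+q)(\phi_ns),\phi_ns)\,\dvol_g$ from the proof of Th.~\ref{c-length} exactly as the paper does (your lower bound $\epsilon c^2\vol_g(U)$ is if anything cleaner than the paper's passage to an a.e.\ convergent subsequence), and point 4 is the paper's cutoff computation $0\leq\int_Nq\phi_n^2\leq\int_N|d\phi_n|^2\leq\|d\phi_n\|_{L^\infty}^2\vol_g(\supp\phi_n)$ merely rephrased as a contradiction. The only caveat is that your ``alternative'' in point 3 (that constant length forces $s$ to be parallel) is not justified by what is proved in Th.~\ref{c-length}, so you should keep the first argument as the actual proof.
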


\begin{proof}
The first two statements follow immediately by Th. \ref{c-length}. Concerning the third one, by \eqref{zerozeroo} we get $$\lim_{n\rightarrow \infty}\int_N\left(\rho(L(\phi_ns)+q\phi_ns,\phi_ns)\right)\dvol_g=0.$$ Since $\rho(L(\phi_ns)+q\phi_ns,\phi_ns)\geq0$ the above limit means that $$\left\|\rho(L(\phi_ns)+q\phi_ns,\phi_ns)\|_{\rho}^2\right\|_{L^1(N,g_N)}\rightarrow 0$$ as $n\rightarrow 0$ and thus, possibly by replacing $\{\phi_n\}_{n\in \mathbb{N}}$ with a subsequence if necessary, we have $$\rho(L(\phi_ns)+q\phi_ns,\phi_ns)\rightarrow 0$$ pointwise a.e. on $N$ as $n\rightarrow \infty$. On the other hand $$\rho(L(\phi_ns)+q\phi_ns,\phi_ns)\rightarrow \rho(Ls+qs,s)$$ pointwise a.e. on $N$ as $n\rightarrow \infty$. Since $L+q\geq 0$ on $N$, it is positive in at least one point and $|s|_{\rho}$ is constant, we can conclude that $s$ vanishes, as required. 
 Finally let us consider the fourth point above. Let us pick a sequence of function $\{\phi_n\}_{n\in \mathbb{N}}$ as in the proof of Th. \ref{c-length}. We have 
$$0\leq \int_N q|\phi_n|^2\dvol_g\leq \int_N |d\phi_n|^2_g\dvol_g\leq \|d\phi_n\|^2_{L^{\infty}\Omega^1(N,g)}\vol_g(\mathrm{supp}(\phi_n)).$$ By the fact that $$\liminf_{n\rightarrow +\infty}\int_N q|\phi_n|^2\dvol_g>0\quad\quad \mathrm{and}\quad\quad \lim_{n\rightarrow +\infty} \|d\phi_n\|_{L^{\infty}\Omega^1(N,g)}=0$$ we deduce that $$\liminf_{n\rightarrow +\infty}\vol_g(\mathrm{supp}(\phi_n))=+\infty\quad\quad \mathrm{and\ thus}\quad\quad \vol_g(N)=+\infty.$$
We can thus conclude that $\ker(D)\cap L^2(N,E,g,\rho)=\{0\}$ since each $s\in \ker(D)\cap L^2(N,E,g,\rho)$ has constant length.
\end{proof}

\section{Applications to stable minimal hypersurfaces}\label{sez2}

In this section we collect various applications of Th. \ref{finiteness} and Th. \ref{c-length} to stable minimal hypersurfaces. Let $(M,g)$ be a Riemannian manifold. We denote with $\mathcal{R}_g$, $R_g$, $\mathrm{sec}_g$, $\mathrm{Ric}_g$ and $s_g$ the curvature operator, curvature tensor, sectional curvature, Ricci tensor and scalar curvature of $(M,g)$, respectively. Moreover, given an immersed submanifold $f:N\rightarrow (M,g)$, we denote with $g_N$, $\mathcal{N}$ and $A$ the metric induced on $N$ by $g$ through $f$, the corresponding unit normal vector field and  second fundamental form, respectively. Given a Riemannian manifold $(M,g)$, an immersed submanifold $f:N\rightarrow (M,g)$ is said to be {\em minimal} if the first derivative of the area functional w.r.t. any smooth normal variation with compact support is zero. Geometrically this means that the mean curvature  of $f:N\rightarrow (M,g)$ vanishes identically. A minimal immersion $f:N\rightarrow (M,g)$ is said to be {\em stable} if in addition the second derivative of the area functional w.r.t. any smooth normal variation with compact support is non-negative. When $f:N\rightarrow M$  has codimension one and is two-sided, the stability condition is equivalent to require that the operator $P:=\Delta-|A|-\mathrm{Ric}_g(\mathcal{N},\mathcal{N})$ satisfies the condition $$\Delta-|A|-\mathrm{Ric}_g(\mathcal{N},\mathcal{N})\geq 0$$ on $C^{\infty}_c(N)$, see e.g. \cite[Eq. (1.147)]{ColMin}. The above operator $P$ is usually named the {\em stability operator} of the immersion $f:N\rightarrow (M,g)$. We come now to the first relevant application of our previous results, which  is concerned with the nullity of a two-sided stable minimal immersed hypersurface. We recall that  the {\em nullity} of a two-sided, immersed, minimal hypersurface $f:N\rightarrow (M,g)$, here denoted by $\mathrm{nullity}(N,f,M,g)$,  is defined as the (possibly infinite) dimension of $\ker(P)\cap L^2(N,g_N)$, with $P$ the corresponding stability operator, see e.g. \cite{CLi}.

\begin{prop}
Let $(M,g)$ be a Riemannian manifold of dimension $m$ and let $f:N\rightarrow (M,g)$ be a complete, two-sided, stable, minimal immersed hypersurface.  If either $|A|^2+\mathrm{Ric}_g(\mathcal{N},\mathcal{N})\leq 0$  or $|A|^2+\mathrm{Ric}_g(\mathcal{N},\mathcal{N})\geq 0$  then  $$ \mathrm{nullity}(N,f,M,g)\in \{0,1\}.$$
\end{prop}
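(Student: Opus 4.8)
The plan is to recognize the stability operator as a special instance of the Schr\"odinger operator $P$ from Theorem \ref{finiteness}, taken on the trivial real line bundle. Concretely, I would set $E := N\times\mathbb{R}$ with its standard fiber metric $\rho$, take $\nabla := d$ (so that $\nabla^t\circ\nabla = \Delta$, the Laplace--Beltrami operator in the $d^t\circ d$ convention used in the paper), put $L := -\big(|A|^2+\mathrm{Ric}_g(\mathcal{N},\mathcal{N})\big)\in C^\infty(N,\mathrm{End}(E))$, and choose $q := |A|^2+\mathrm{Ric}_g(\mathcal{N},\mathcal{N})\in C^\infty(N,\mathbb{R})$. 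With these choices $P = \nabla^t\circ\nabla+L = \Delta-|A|^2-\mathrm{Ric}_g(\mathcal{N},\mathcal{N})$ is exactly the stability operator of the immersion $f$, hence $\ker(P)\cap L^2(N,g_N)$ is by definition the nullity space, and $\mathrm{rnk}(E)=1$.

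Next I would verify the hypotheses \eqref{1sign} of Theorem \ref{finiteness}. The inequality $L+q\geq 0$ is immediate since $L+q\equiv 0$ by construction. The inequality $\langle(\Delta-q)\phi,\phi\rangle_{L^2(N,g_N)}\geq 0$ for all $\phi\in C^\infty_c(N)$ is precisely the stability inequality for $f$, because $\Delta-q=\Delta-|A|^2-\mathrm{Ric}_g(\mathcal{N},\mathcal{N})$. (Completeness of $(N,g_N)$, which is needed in order to invoke Theorem \ref{finiteness}, is part of the standing assumptions on the hypersurfaces considered here.)

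Finally I would split into the two cases. If $|A|^2+\mathrm{Ric}_g(\mathcal{N},\mathcal{N})\geq 0$ on $N$, then $q\geq 0$ and Theorem \ref{finiteness} applies directly, yielding $$\mathrm{nullity}(N,f,M,g)=\dim\big(\ker(P)\cap L^2(N,g_N)\big)\leq \mathrm{rnk}(E)=1.$$ If instead $|A|^2+\mathrm{Ric}_g(\mathcal{N},\mathcal{N})\leq 0$ on $N$, then $q\leq 0$ and the same bound \eqref{xxx} follows from Remark \ref{yyy} (here $L\geq 0$, so any $L^2$ element of $\ker(P)$ is parallel, hence determined by its value at a single point). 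In both cases the nullity is at most $1$, so it lies in $\{0,1\}$.

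I do not anticipate a substantive obstacle: all the analytic content is already packaged in Theorem \ref{finiteness} and Remark \ref{yyy}, and this proposition is a direct specialization of them. The only points requiring a little care are bookkeeping matters — matching sign conventions (the $d^t\circ d$ normalization of $\Delta$, and the fact that the stability operator must be taken with $|A|^2$ rather than $|A|$ to be consistent with the hypotheses), and confirming that completeness of $(N,g_N)$ is in force so that Theorem \ref{finiteness} is applicable.
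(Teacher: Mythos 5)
Your proof is correct and is essentially identical to the paper's own argument: both apply Theorem \ref{finiteness} (and Remark \ref{yyy} in the case $q\leq 0$) to the stability operator on the trivial line bundle with $L=-q=-|A|^2-\mathrm{Ric}_g(\mathcal{N},\mathcal{N})$. Your side remarks on the sign conventions and on the implicit completeness assumption are accurate and do not change the substance.
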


\begin{proof}
Assume first that $|A|^2+\mathrm{Ric}_g(\mathcal{N},\mathcal{N})\geq 0$. Then the conclusion follows by applying  Th. \ref{finiteness} to $P=\Delta-|A|^2-\mathrm{Ric}_g(\mathcal{N},\mathcal{N})$ with  $L=-|A|^2-\mathrm{Ric}_g(\mathcal{N},\mathcal{N})$ and $q=|A|^2+\mathrm{Ric}_g(\mathcal{N},\mathcal{N})$. If $|A|^2+\mathrm{Ric}_g(\mathcal{N},\mathcal{N})\leq 0$  the conclusion  follows  by applying Th. \ref{finiteness} and Rmk \ref{yyy} to $P=\Delta-|A|^2-\mathrm{Ric}_g(\mathcal{N},\mathcal{N})$ with  $L=-|A|^2-\mathrm{Ric}_g(\mathcal{N},\mathcal{N})$ and $q=|A|^2+\mathrm{Ric}_g(\mathcal{N},\mathcal{N})$.
\end{proof}

We have the following immediate consequence:

\begin{cor}
\label{nullity}
Let $(M,g)$ be a Riemannian manifold of dimension $m$ and let $f:N\rightarrow (M,g)$ be a complete, two-sided, stable, minimal immersed hypersurface. If either $\mathrm{Ric}_g\geq 0$ or  $\mathrm{Ric}_g\leq -b^2<0$ and $|A|\leq b^2$ then $$\mathrm{nullity}(N,f,M,g)\in \{0,1\}.$$ 
\end{cor}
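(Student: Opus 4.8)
The plan is to reduce Corollary \ref{nullity} to the Proposition that immediately precedes it, so essentially nothing new needs to be done beyond checking that the two hypotheses of the Corollary are special cases of the alternative hypotheses of the Proposition. Recall that the Proposition says: if either $|A|^2+\mathrm{Ric}_g(\mathcal{N},\mathcal{N})\le 0$ everywhere on $N$, or $|A|^2+\mathrm{Ric}_g(\mathcal{N},\mathcal{N})\ge 0$ everywhere on $N$, then the nullity lies in $\{0,1\}$. So I must verify that each of the two curvature assumptions in the Corollary forces one of these sign conditions on the scalar function $|A|^2+\mathrm{Ric}_g(\mathcal{N},\mathcal{N})$ along $f(N)$.

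First I would handle the case $\mathrm{Ric}_g\ge 0$. Since $\mathcal{N}$ is a unit normal vector field along the immersion, $\mathrm{Ric}_g(\mathcal{N},\mathcal{N})\ge 0$ pointwise on $f(N)$ by the assumption $\mathrm{Ric}_g\ge 0$ (as a quadratic form on $TM$). Also $|A|^2\ge 0$ trivially. Hence $|A|^2+\mathrm{Ric}_g(\mathcal{N},\mathcal{N})\ge 0$ on $N$, which is exactly the second alternative in the Proposition; applying it gives $\mathrm{nullity}(N,f,M,g)\in\{0,1\}$. Next I would handle the case $\mathrm{Ric}_g\le -b^2<0$ and $|A|\le b^2$. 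Here one must be slightly careful: the intended reading is that the inequality $|A|\le b^2$ should actually bound $|A|^2$, or rather, the hypothesis is presumably $|A|^2\le b^2$ so that $|A|^2+\mathrm{Ric}_g(\mathcal{N},\mathcal{N})\le b^2 + (-b^2) = 0$ on $N$. Under this reading, $\mathrm{Ric}_g(\mathcal{N},\mathcal{N})\le -b^2$ (again because $\mathrm{Ric}_g\le -b^2$ as a quadratic form and $\mathcal{N}$ is unit), combined with $|A|^2\le b^2$, gives $|A|^2+\mathrm{Ric}_g(\mathcal{N},\mathcal{N})\le 0$ on $N$, which is the first alternative in the Proposition; applying it again yields $\mathrm{nullity}(N,f,M,g)\in\{0,1\}$.

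In both cases the conclusion is immediate once the sign condition is established, so I would write the proof as two short paragraphs, each invoking the Proposition. The main (and essentially only) subtlety is the bookkeeping in the second case — making sure the constants line up so that the sum is genuinely $\le 0$, and being explicit that $\mathrm{Ric}_g\le -b^2$ is meant in the sense of quadratic forms evaluated on the unit vector $\mathcal{N}$. There is no analytic obstacle here at all; the real content is entirely in the Proposition (hence in Th. \ref{finiteness} and Rmk \ref{yyy}), and the Corollary is purely a matter of recognizing when the pointwise function $|A|^2+\mathrm{Ric}_g(\mathcal{N},\mathcal{N})$ has a definite sign.

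\begin{proof}
If $\mathrm{Ric}_g\ge 0$ then, since $\mathcal N$ is a unit vector field along $f$, we have $\mathrm{Ric}_g(\mathcal N,\mathcal N)\ge 0$ pointwise on $f(N)$, and therefore $|A|^2+\mathrm{Ric}_g(\mathcal N,\mathcal N)\ge 0$ on $N$. The conclusion then follows from the previous Proposition.

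If instead $\mathrm{Ric}_g\le -b^2<0$ and $|A|^2\le b^2$, then $\mathrm{Ric}_g(\mathcal N,\mathcal N)\le -b^2$ pointwise on $f(N)$ because $\mathcal N$ is a unit vector field, and hence $|A|^2+\mathrm{Ric}_g(\mathcal N,\mathcal N)\le b^2-b^2=0$ on $N$. Again the conclusion follows from the previous Proposition.
\end{proof}
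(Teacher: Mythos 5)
Your proof is correct and matches the paper's intent exactly: the paper presents this Corollary as an immediate consequence of the preceding Proposition and gives no separate argument, and your reduction (checking the sign of $|A|^2+\mathrm{Ric}_g(\mathcal N,\mathcal N)$ in each case) is precisely that intended reduction. Your remark about reading the hypothesis as $|A|^2\le b^2$ is also the right call, consistent with the paper's occasional conflation of $|A|$ and $|A|^2$ (e.g.\ in its statement of the stability operator).
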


We have now some applications to the conformal Laplacian, also known as the Yamabe operator. We point out that in the case $N$ is compact the results of the next two propositions are due to Schoen and Yau, see \cite{SchoenYau}.

\begin{prop}
\label{conformal}
Let $(M,g)$ be a Riemannian manifold of dimension $m$ and let $f:N\rightarrow (M,g)$ be a complete, two-sided, stable, minimal immersed hypersurface. If $s_g\geq 0$ then $$L:=\frac{4(n-1)}{n-2}\Delta+s_{g_N}:L^2(N,g_N)\rightarrow L^2(N,g_N)$$ is essentially self-adjoint with initial domain $C_c^{\infty}(N)$, non-negative and $$\dim(\ker(L)\cap L^2(N,g_N))\leq 1.$$
\end{prop}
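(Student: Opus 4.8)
The plan is to derive all three assertions from Theorem~\ref{finiteness} combined with a Schoen--Yau-type rearrangement of the stability inequality. Write $n=\dim(N)$ and observe that
$$L=\frac{4(n-1)}{n-2}\,\widetilde L,\qquad \widetilde L:=\Delta+V,\qquad V:=\frac{n-2}{4(n-1)}\,s_{g_N}\in C^\infty(N,\mathbb{R}).$$
Since the positive constant $\frac{4(n-1)}{n-2}$ changes neither essential self-adjointness, nor non-negativity, nor the kernel, it is enough to prove the three statements for $\widetilde L$, which we view as a Schr\"odinger operator $\nabla^t\circ\nabla+V$ on the trivial real line bundle $E=N\times\mathbb{R}$ with $\nabla=d$, so that $\nabla^t\circ\nabla=\Delta$ and $\mathrm{rnk}(E)=1$.

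The key computation is the identity, valid for every $\phi\in C^\infty_c(N)$,
$$\langle\widetilde L\phi,\phi\rangle_{L^2(N,g_N)}=\int_N\big(|d\phi|_{g_N}^2+V\phi^2\big)\,\dvol_{g_N}=\frac{n}{2(n-1)}\int_N|d\phi|_{g_N}^2\,\dvol_{g_N}+\frac{n-2}{2(n-1)}\langle P\phi,\phi\rangle_{L^2(N,g_N)}+\frac{n-2}{4(n-1)}\int_N(|A|^2+s_g)\phi^2\,\dvol_{g_N},$$
where $P=\Delta-|A|^2-\mathrm{Ric}_g(\mathcal N,\mathcal N)$ is the stability operator; it follows by a direct rearrangement once one inserts the Gauss equation $s_{g_N}=s_g-2\,\mathrm{Ric}_g(\mathcal N,\mathcal N)-|A|^2$ for the minimal immersion $f$. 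Because $f$ is stable we have $\langle P\phi,\phi\rangle\ge0$, and because $s_g\ge0$ and $|A|^2\ge0$ the last integral is $\ge0$; hence $\langle\widetilde L\phi,\phi\rangle\ge0$ (so $L$ is non-negative) and, dropping the last two non-negative terms, also
$$\langle\widetilde L\phi,\phi\rangle_{L^2(N,g_N)}\ \ge\ \frac{n}{2(n-1)}\int_N|d\phi|_{g_N}^2\,\dvol_{g_N}.$$

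With this in hand I would apply Theorem~\ref{finiteness} to $\widetilde L$ with the choice $q:=-V$: the requirement $L+q\ge0$ of that theorem reads $V+(-V)=0\ge0$, while $\langle(\Delta-q)\phi,\phi\rangle\ge0$ reads $\langle\widetilde L\phi,\phi\rangle\ge0$, which we just verified. Theorem~\ref{finiteness} then yields that $\widetilde L$, and hence $L$, is essentially self-adjoint on $C^\infty_c(N)$. The dimension clause of Theorem~\ref{finiteness} is not directly available, since $q=-V=-\frac{n-2}{4(n-1)}s_{g_N}$ need not be non-negative, so the bound $\dim\le1$ must come from the gradient estimate: let $u\in\ker(L)\cap L^2(N,g_N)=\ker(\widetilde L)\cap L^2(N,g_N)$; by essential self-adjointness pick $u_j\in C^\infty_c(N)$ with $u_j\to u$ and $\widetilde L u_j\to 0$ in $L^2(N,g_N)$, so that $\langle\widetilde L u_j,u_j\rangle\to0$, whence the gradient estimate forces $\int_N|du_j|_{g_N}^2\,\dvol_{g_N}\to0$. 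Then $\{u_j\}$ is Cauchy in $W^{1,2}(N,g_N)$, so $u_j\to u$ there and $\overline{d}u=0$; thus $u$ is locally constant, hence constant since $N$ is connected, and being in $L^2(N,g_N)$ it is either identically zero or $N$ has finite volume and $u$ is a nonzero constant. In particular $\ker(L)\cap L^2(N,g_N)$ lies in the one-dimensional space of constants, so $\dim(\ker(L)\cap L^2(N,g_N))\le1$.

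The main obstacle is the key computation, i.e.\ isolating the Dirichlet term with exactly the coefficient $\frac{n}{2(n-1)}$ (equivalently, writing $\frac{4(n-1)}{n-2}|d\phi|^2+s_{g_N}\phi^2$ as $2\big[\,|d\phi|^2-(|A|^2+\mathrm{Ric}_g(\mathcal N,\mathcal N))\phi^2\,\big]+\frac{2n}{n-2}|d\phi|^2+(|A|^2+s_g)\phi^2$ and then converting via the Gauss equation): it is this surplus Dirichlet term, rather than plain non-negativity, that confines the $L^2$-kernel to the constants and delivers the sharp bound $1$. Everything else is a routine application of Section~1.
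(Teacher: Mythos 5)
Your proof is correct, and the identity at its core checks out: with $V=\frac{n-2}{4(n-1)}s_{g_N}$ and the Gauss equation $s_{g_N}=s_g-2\mathrm{Ric}_g(\mathcal N,\mathcal N)-|A|^2$ for a minimal immersion, one indeed gets $\langle(\Delta+V)\phi,\phi\rangle=\frac{n}{2(n-1)}\int|d\phi|^2+\frac{n-2}{2(n-1)}\langle P\phi,\phi\rangle+\frac{n-2}{4(n-1)}\int(s_g+|A|^2)\phi^2$, each term non-negative. Your route differs from the paper's in a meaningful way. The paper sets $q:=\frac{n-2}{2(n-1)}(\mathrm{Ric}_g(\mathcal N,\mathcal N)+|A|^2)$ and verifies the two hypotheses of Theorem~\ref{finiteness} separately: stability plus the inequality $\frac{2(n-1)}{n-2}\geq 1$ gives $\Delta-q\geq 0$, and Gauss--Codazzi gives $V+q=\frac{n-2}{4(n-1)}(s_g+|A|^2)\geq 0$; it then reads off all three conclusions from that theorem. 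You instead take the degenerate choice $q=-V$, fold stability and Gauss--Codazzi into the single rearrangement identity to verify $\Delta+V\geq 0$, and extract the kernel bound not from the theorem's dimension clause but from the surplus Dirichlet term, which forces any $L^2$-kernel element to be constant. This buys you something real: the dimension clause of Theorem~\ref{finiteness} requires $q\geq 0$, and the paper's $q$ is proportional to $-s_{g_N}+s_g+|A|^2$, which is not obviously non-negative under the hypothesis $s_g\geq 0$ alone, so your quantitative argument is the more self-contained justification of $\dim(\ker L\cap L^2)\leq 1$. What the paper's route buys in exchange (when its sign condition does hold) is the stronger structural statement that non-trivial kernel elements are nowhere vanishing, which is what gets reused elsewhere in Section~2. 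Two minor points to make explicit in a final write-up: connectedness of $N$ (needed to pass from locally constant to constant, and implicitly assumed throughout the paper), and the standard fact that essential self-adjointness lets you approximate a distributional $L^2$-kernel element by $u_j\in C^\infty_c(N)$ with $\widetilde Lu_j\to 0$ in $L^2$ --- which is exactly the device used in the paper's own proof of Theorem~\ref{finiteness}.
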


\begin{proof}
We adapt to our framework the strategy used by Schoen-Yau in \cite{SchoenYau}. Let us consider the operator $\tilde{L}:=\frac{n-2}{4(n-1)}L$. Obviously the above statement holds true for $L$ if and only if holds true for $\tilde{L}$. Let $q:N\rightarrow \mathbb{R}$ be defined as  $q:=\frac{n-2}{2(n-1)}(\mathrm{Ric_g(\mathcal{N},\mathcal{N})}+|A|^2)$. Since $f:N\rightarrow (M,g)$ is stable and two-sided, we have $\Delta-q\geq 0$ on $C_c^{\infty}(N)$. Indeed given any $\phi\in C_c^{\infty}(N)$ we have
$$
\begin{aligned}
0&\leq \langle d\phi,d\phi\rangle_{L^2\Omega^1(N,g_N)}-\langle (\mathrm{Ric_g(\mathcal{N},\mathcal{N})}+|A|^2)\phi,\phi\rangle_{L^2(N,g_N)}\\
&\leq \frac{2(n-1)}{n-2}\langle d\phi,d\phi\rangle_{L^2\Omega^1(N,g_N)}-\langle (\mathrm{Ric_g(\mathcal{N},\mathcal{N})}+|A|^2)\phi,\phi\rangle_{L^2(N,g_N)}
\end{aligned}
$$ and consequently by multiplying both sides with $\frac{n-2}{2(n-1)}$ we get $$0\leq \langle d\phi,d\phi\rangle_{L^2\Omega^1(N,g_N)}-\langle q\phi,\phi\rangle_{L^2(N,g_N)}.$$ Now using the minimality and the Gauss-Codazzi equation we have 
{ \begin{equation}\label{GC}
\mathrm{Ric}_g(\mathcal{N},\mathcal{N})+|A|^2=\frac{1}{2}(-s_{g_N}+s_g+|A|^2).
\end{equation}}
Thus we get $$\frac{n-2}{4(n-1)}s_{g_N}+q=\frac{n-2}{4(n-1)}s_{g_N}+\frac{n-2}{4(n-1)}(-s_{g_N}+s_g+|A|^2)=\frac{n-2}{4(n-1)}(s_g+|A|^2)\geq 0.$$
The conclusion now follows by applying Th. \ref{finiteness} to $\tilde{L}$.
\end{proof}

Endowed with the previous proposition we can show that the conformal class $[g_N]$ always contains a scalar-flat metric and, under additional assumptions, it contains also a positive scalar curvature metric. More precisely we have:

\begin{prop}
\label{scalarflat}
In the setting of Prop. \ref{conformal} assume that $m>3$. Then $[g_N]$, the conformal class of $g_N$, contains a (possibly incomplete) scalar-flat metric. If $s_g|_{f(N)}$ is uniformly positive then $[g_N]$ contains a (possibly incomplete) metric with positive scalar curvature.
\end{prop}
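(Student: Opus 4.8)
The plan is to use the standard conformal change of scalar curvature in dimension $n = \dim(N) \geq 3$. Recall that if $\tilde g_N = u^{4/(n-2)} g_N$ for a positive function $u$, then the scalar curvature of $\tilde g_N$ is given by $s_{\tilde g_N} = u^{-(n+2)/(n-2)} L u$, where $L = \frac{4(n-1)}{n-2}\Delta + s_{g_N}$ is the conformal Laplacian appearing in Prop. \ref{conformal}. Hence $\tilde g_N$ is scalar-flat precisely when $Lu = 0$, and $\tilde g_N$ has positive scalar curvature precisely when $Lu > 0$ everywhere (with $u > 0$). So the whole matter reduces to producing a positive solution $u$ of $Lu = 0$ (for the scalar-flat case) or a positive function $u$ with $Lu > 0$ (for the PSC case); completeness of $\tilde g_N$ is not claimed and need not be addressed.

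For the scalar-flat statement, first I would apply Prop. \ref{conformal}: since $s_g \geq 0$, the operator $L$ is essentially self-adjoint, non-negative, and its $L^2$-kernel is at most one-dimensional. I then split into two cases. If $\ker(L)\cap L^2(N,g_N)$ contains a non-trivial element $u$, then by (the argument behind) Th. \ref{finiteness} applied with $q = \frac{n-2}{2(n-1)}(\mathrm{Ric}_g(\mathcal N,\mathcal N)+|A|^2)$ — exactly as in the proof of Prop. \ref{conformal}, where it is shown that $\frac{n-2}{4(n-1)}s_{g_N} + q \geq 0$ so that $q$ may be taken non-negative only after a shift; more carefully, one checks that the relevant hypothesis of Th. \ref{finiteness} giving ``nowhere vanishing'' applies — $u$ is smooth and nowhere zero, so after replacing $u$ by $-u$ if needed we get a positive smooth solution of $Lu=0$, and $\tilde g_N = u^{4/(n-2)}g_N$ is the desired scalar-flat metric. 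If instead $\ker(L)\cap L^2 = \{0\}$, then $0$ is not an $L^2$-eigenvalue, but one still needs a positive (not necessarily $L^2$) solution of $Lu = 0$. Here I would invoke the general principle that a Schrödinger operator $L = c\Delta + V$ on a complete manifold which is non-negative on $C_c^\infty(N)$ always admits a positive solution of $Lu = 0$ on $N$ — this is a classical fact (Fischer-Colbrie–Schoen type, via exhaustion by Dirichlet problems on $V_k$ and a Harnack/compactness argument to extract a positive limit solution). That positive $u$ gives the scalar-flat metric in the conformal class.

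For the positive scalar curvature statement, assume in addition $s_g|_{f(N)} \geq \sigma_0 > 0$. I would take the positive solution $u$ of $Lu = 0$ produced above and perturb it: look for $u_\varepsilon = u + \varepsilon v$ with $v > 0$ a fixed positive function and $\varepsilon > 0$ small, or more simply solve $L u = \lambda_1 u$-type perturbations. Actually the cleanest route is: since $\frac{n-2}{4(n-1)}s_{g_N} + q = \frac{n-2}{4(n-1)}(s_g + |A|^2)$ by \eqref{GC}, and this quantity is now bounded below by a positive constant on $f(N)$, the operator $L$ dominates $\Delta + (\text{positive constant})$ in the sense that $\langle Lu, u\rangle \geq \langle(\tfrac{4(n-1)}{n-2}\Delta + c_0)u,u\rangle$ on $C_c^\infty$; using the positive solution of $Lu=0$ as a barrier and solving $Lw = f$ for a suitable positive compactly-supported $f$ (possible since $0$ lies below the spectrum once $s_g$ is uniformly positive, so $L$ is invertible), one obtains $u$ with $Lu > 0$. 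Then $\tilde g_N = u^{4/(n-2)}g_N$ has $s_{\tilde g_N} = u^{-(n+2)/(n-2)}Lu > 0$.

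The main obstacle is the step for the case $\ker(L)\cap L^2 = \{0\}$ in the scalar-flat part, and the invertibility/barrier argument in the PSC part: one must produce a \emph{positive} solution (or subsolution) of $L$ that need not be $L^2$, which is not handled by Th. \ref{finiteness} directly. I expect to handle this by the Fischer-Colbrie–Schoen technique — solving Dirichlet eigenvalue problems on an exhaustion $\{V_k\}$, normalizing the first positive eigenfunctions at a base point, and passing to a limit via interior elliptic estimates and the Harnack inequality — together with the positivity of $\frac{n-2}{4(n-1)}s_{g_N} + q$ established via the Gauss–Codazzi identity \eqref{GC} and the stability inequality, exactly as in the proof of Prop. \ref{conformal}.
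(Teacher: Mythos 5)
Your scalar-flat argument is essentially the paper's: the key input is the Fischer-Colbrie--Schoen principle (\cite[Th.~1]{Fischer}, or \cite[Lemma 3.10]{Pigola}) that a Schr\"odinger operator which is non-negative on $C_c^{\infty}(N)$ over a complete manifold admits a smooth positive solution of $Lu=0$, and the paper invokes exactly this, with no case split. Your first branch (a non-trivial element of $\ker(L)\cap L^2$) is redundant and, as stated, not justified: the nowhere-vanishing conclusion of Th.~\ref{finiteness} requires $q\geq 0$, and the $q=\frac{n-2}{2(n-1)}(\mathrm{Ric}_g(\mathcal{N},\mathcal{N})+|A|^2)$ used in Prop.~\ref{conformal} need not be non-negative when one only assumes $s_g\geq 0$; only the combination $\frac{n-2}{4(n-1)}s_{g_N}+q$ is controlled via \eqref{GC}. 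Since your second branch covers all cases anyway, this does not damage the scalar-flat half.

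The positive-scalar-curvature half, in the form you finally commit to, has a genuine gap. Solving $Lw=f$ with $f$ positive and \emph{compactly supported} gives $Lw=0$ outside $\mathrm{supp}(f)$, so the conformal metric $w^{4/(n-2)}g_N$ is merely scalar-flat, not of positive scalar curvature, off a compact set; moreover the positivity of $w$ is not addressed. The route you mention in passing and then abandon is the correct one, and it is what the paper does: from the stability inequality and \eqref{GC} one gets $\langle L\phi,\phi\rangle_{L^2}\geq \tfrac{4(n-1)}{n-2}c\,\|\phi\|^2_{L^2}$ for all $\phi\in C_c^{\infty}(N)$, so for any $\lambda\in\bigl(0,\tfrac{4(n-1)}{n-2}c\bigr)$ the shifted operator $\hat{L}:=L-\lambda$ is still non-negative on $C_c^{\infty}(N)$; applying Fischer-Colbrie--Schoen to $\hat{L}$ yields a smooth $\beta>0$ with $L\beta=\lambda\beta$, and then $s_{\tilde{g}}=\beta^{-\frac{n+2}{n-2}}L\beta=\lambda\beta^{-\frac{4}{n-2}}>0$ everywhere. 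Replace your barrier/invertibility sketch by this shift argument and the proof is complete.
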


\begin{proof}
By Prop. \ref{conformal} we know that $L$ is non-negative on $C_{c}^{\infty}(N)$. Then for every open set $\Omega\subset N$ we have $$0\leq \inf\{\langle Lu,u\rangle_{L^2(\Omega,g_N|_{\Omega})},\ u\in C_c^{\infty}(\Omega),\ \|u\|_{L^2(\Omega,g_N|_{\Omega})}^2=1\}.$$ We can thus apply \cite[Th. 1]{Fischer} (see also \cite[Lemma 3.10]{Pigola}) to conclude that there exists a smooth function $\psi$ such  that  $\psi(x)>0$ for each $x\in N$ and $L\psi=0$.  Let $\gamma:= \psi^{4/(n-2)}$. Then the scalar curvature of the metric $\tilde{g}:=\gamma g_N$ is given by $$s_{\tilde{g}}=\psi^{-\frac{n+2}{n-2}}L\psi=0.$$
Consider now the case where $s_g|_{f(N)}$ is uniformly positive. This means that there exists a positive constant $c$ such that $s_g(f(x))\geq c$ for each $x\in N$. Arguing as in the proof of Prop. \ref{conformal} we can show easily that $\langle \tilde{L}\phi, \phi\rangle_{L^2(N,g_N)}\geq c\|\phi\|^2_{L^2(N,g_N)}$ for every $\phi\in C_c^{\infty}(N)$ and thus $$\langle L\phi, \phi\rangle_{L^2(N,g_N)}\geq 4c\frac{n-1}{n-2}\|\phi\|^2_{L^2(N,g_N)}$$ for every $\phi\in C_c^{\infty}(N)$. Let $\lambda\in (0,4c\frac{n-1}{n-2})$ be arbitrarily fixed. Then the operator $\hat{L}:=L-\lambda$ satisfies  $$\langle \hat{L}\phi, \phi\rangle_{L^2(N,g_N)}\geq \left(4c\frac{n-1}{n-2}-\lambda\right)\|\phi\|^2_{L^2(N,g_N)}$$ for every $\phi\in C_c^{\infty}(N)$. Then, by repeating the above argument, we have $$0\leq \inf\{\langle \hat{L}u,u\rangle_{L^2(\Omega,g_N|_{\Omega})},\ u\in C_c^{\infty}(\Omega),\ \|u\|_{L^2(\Omega,g_N|_{\Omega})}^2=1\}$$ for every open set $\Omega\subset N$. We can thus apply again \cite[Th. 1]{Fischer} or \cite[Lemma 3.10]{Pigola}) to conclude that there exists a smooth function $\beta$ such  that  $\beta(x)>0$ for each $x\in N$ and $\hat{L}\beta=0$, that is $L\beta=\lambda \beta$.  Let $\upsilon:= \beta^{4/(n-2)}$. Then the scalar curvature of the metric $\tilde{g}:=\upsilon g_N$ is given by $$s_{\tilde{g}}=\beta^{-\frac{n+2}{n-2}}L\beta=\beta^{-\frac{n+2}{n-2}}\lambda\beta$$ which is a positive function on $N$.
\end{proof}

We collect now various applications of Th. \ref{c-length}. We are particularly interested in two Dirac operators that appear naturally in many problems of Riemannian geometry and geometric analysis: the spin-Dirac operator $\eth$ and the Hodge-de Rham operator $d+d^t$. We start with the first of the two and as a first step we provide a very succinct introduction to the spin-Dirac operator. We invite the unfamiliar reader to consult \cite{Lawson} for the background material in spin geometry.\\

A Riemannian spin structure over an oriented Riemannian manifold $(M,g)$ of dimension $m$ is a pair $(P_{\mathrm{spin}}(M),\overline{\rho})$, consisting of 
\begin{enumerate}
\item a $\mathrm{Spin}(m)$-principal bundle $P_{\mathrm{Spin}}(M)\rightarrow M$;
\item a twofold covering $\overline{\rho}:P_{\mathrm{Spin}}(M)\rightarrow P_{\mathrm{SO}}(M)$ such that the diagram 
$$\begin{tikzcd} 
P_{\mathrm{Spin}}(M)\times \mathrm{Spin}(m)\arrow{d}{(\overline{\rho},\rho)} \arrow{r} & P_{\mathrm{Spin}}(M) \arrow{d}{\rho} \arrow{dr}\\
P_{\mathrm{SO}}(M)\times \mathrm{SO}(m)\arrow{r} & P_{\mathrm{SO}}(M)
 \arrow{r} &M 
\end{tikzcd}$$
commutes. 
\end{enumerate}
Note that in the above diagram $P_{SO}(M)$ denotes the principal $SO(m)$-bundle over $M$ induced by $g$, the horizontal maps $P_{\mathrm{Spin}}(M)\times \mathrm{Spin}(m) \rightarrow P_{\mathrm{Spin}}(M)$ and $
P_{\mathrm{SO}}(M)\times \mathrm{SO}(m)\rightarrow  P_{\mathrm{SO}}(M)$ are the right group actions of $\mathrm{Spin}(m)$ and $SO(m)$, respectively, and finally the map $\rho: \mathrm{Spin}(m)\rightarrow SO(m)$ is the twofold covering homomorphism which fits into the short exact sequence of groups $$1\rightarrow \{1,-1\}\rightarrow \mathrm{Spin}(m)\stackrel{\rho}{\rightarrow} SO(m)\rightarrow 1.$$ We refer to \cite[Th. 2.10]{Lawson} for a precise definition of $\rho$. Riemannian spin structures may fail to exist or to be unique. However their existence and uniqueness are purely topological questions; more precisely they exist if and only if $w_2(M)=0$, that is the second Stiefel-Whitney class of $M$ vanishes and, if $w_2(M)=0$, the distinct Riemannian spin structures on $(M,g)$ are in one-to-one correspondence with the elements of $H^1(M,\mathbb{Z}_2)$, see i.e. \cite[Th. 1.7]{Lawson}.
\begin{defi}
An oriented Riemannian manifold $(M,g)$ is said to be spinnable if it admits Riemannian spin structures. A Riemannian spin manifold is a spinnable Riemannian manifold $(M,g)$ with a fixed Riemannian spin structure $P_{\mathrm{Spin}(m)}(M)\rightarrow M$.
\end{defi}
Let us now consider a Riemannian spin manifold $(M,g)$ and let $P_{\mathrm{Spin}}(M)\rightarrow M$ be a fixed Riemannian spin structure on $M$. Using the unitary representation of the group $\mathrm{Spin}(m)$ into the spinor space, the principal bundle $P_{\mathrm{Spin}}(M)\rightarrow M$ canonically induces a complex Hermitian vector bundle $(\Sigma M,\tau)\rightarrow M$, which is called the spinor bundle. Moreover the Levi-Civita connection of $TM$ lifts to a Hermitian connection on $\Sigma M$, $$\nabla^{\Sigma}:C^{\infty}(M,\Sigma M)\rightarrow C^{\infty}(M,T^*M\otimes \Sigma M)$$ called the spinor connection. The spin-Dirac operator $$\eth:C^{\infty}(M,\Sigma M)\rightarrow C^{\infty}(M,\Sigma M)$$ is the first order, formally self-adjoint, elliptic differential operator obtained by composing the spinor connection $\nabla^{\Sigma}$ with the isomorphism induced by $g$ $$T^*M\otimes \Sigma M\cong TM \otimes \Sigma$$ and the Clifford multiplication $$C^{\infty}(M,TM\otimes \Sigma M)\rightarrow C^{\infty}(M,\Sigma M)$$ see \cite[Ch. II]{Lawson} for details. Locally $\eth$ has the following description: let $U\subset M$ be an open neighbourhood such that $TM|_U$ is trivial and let $\{e_1,...,e_m\}$ be an orthonormal frame for $TM|_U$. Given any $s\in \Sigma M|_U$ we have $$\eth s=\sum_{k=1}^me_k\cdot \nabla^{\Sigma}_{e_k}s$$ with $\cdot$ denoting the Clifford multiplication, see \cite[Ch II, \S 5]{Lawson}.  The space of $L^2$-harmonic spinors of the spinor bundle $(\Sigma,\tau)\rightarrow M$ is defined as $$\ker(\eth)\cap L^2(M,\Sigma M,g,\tau).$$ At this point we can give the following
\begin{defi}
We say that a spinnable Riemannian manifold $(M,g)$ carries no non-trivial $L^2$-harmonic spinors if for every arbitrarily fixed Riemannian spin structure on $M$ the corresponding spinor bundle carries no non-trivial $L^2$-harmonic spinors.
\end{defi}

We have now all the ingredients for the next

\begin{teo}
\label{spin}
Let $(M,g)$ be a Riemannian manifold with $s_g\geq 0$. Let $N$ be a spinnable manifold with $\dim(N)+1=\dim(M)$ such that there exists a two-sided, stable minimal immersion $$f:N\rightarrow (M,g)$$ with $(N,g_N)$  complete. Let $P_{\mathrm{Spin(n)}}(N)\rightarrow N$ be an arbitrarily fixed Riemannan spin structure on $N$ and let $(\Sigma N,\tau)\rightarrow N$ and $\eth$ be the corresponding spinor bundle and spin-Dirac operator. Then every $L^2$-harmonic spinor of $(\Sigma N, \tau)\rightarrow N$ has constant length and consequently 
$$\dim\left(\ker(\eth)\cap L^2(N,\Sigma N,g_N,\tau)\right)\leq 2^{n/2}$$ if $n$ is even whereas
$$\dim\left(\ker(\eth)\cap L^2(N,\Sigma N,g_N,\tau)\right)\leq 2^{(n-1)/2}$$ if $n$ is odd.
\end{teo}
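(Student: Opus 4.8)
The plan is to apply Theorem \ref{c-length} directly, with Clifford module $(E,\rho)=(\Sigma N,\tau)$, Dirac operator $D=\eth$, and a carefully chosen potential $q\in C^\infty(N,\mathbb{R})$. First I would recall the Lichnerowicz formula for the spin-Dirac operator: $\eth^2=(\nabla^{\Sigma})^t\circ\nabla^{\Sigma}+\tfrac14 s_{g_N}\,\mathrm{Id}_{\Sigma N}$ (see e.g. \cite{Lawson}), so that in the notation of Theorem \ref{c-length} the zeroth-order term is $L=\tfrac14 s_{g_N}\,\mathrm{Id}$.

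The crucial step is the choice $q:=\tfrac12\bigl(\mathrm{Ric}_g(\mathcal{N},\mathcal{N})+|A|^2\bigr)$. The second condition in \eqref{sign}, namely $\langle(\Delta-q)\phi,\phi\rangle_{L^2(N,g_N)}\geq 0$ for every $\phi\in C^\infty_c(N)$, follows from the stability of $f$: since $\Sigma=N$ is codimension one and oriented (being spinnable), the stability operator satisfies $\Delta-\bigl(\mathrm{Ric}_g(\mathcal{N},\mathcal{N})+|A|^2\bigr)\geq 0$ on $C^\infty_c(N)$, and writing $\langle(\Delta-q)\phi,\phi\rangle_{L^2}=\tfrac12\langle(\Delta-2q)\phi,\phi\rangle_{L^2}+\tfrac12\langle\Delta\phi,\phi\rangle_{L^2}$ exhibits it as a sum of two non-negative terms (the first by stability with $2q=\mathrm{Ric}_g(\mathcal{N},\mathcal{N})+|A|^2$, the second since $\langle\Delta\phi,\phi\rangle_{L^2}=\|d\phi\|^2_{L^2}\geq 0$). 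For the first condition, $L+q\geq 0$, I would invoke the Gauss equation in the form \eqref{GC}, valid because $f$ is minimal: $\mathrm{Ric}_g(\mathcal{N},\mathcal{N})+|A|^2=\tfrac12(-s_{g_N}+s_g+|A|^2)$. Hence $L+q=\tfrac14 s_{g_N}+\tfrac14(-s_{g_N}+s_g+|A|^2)=\tfrac14(s_g+|A|^2)\geq 0$, where non-negativity uses the hypothesis $s_g\geq 0$. The whole point of the factor $\tfrac12$ in the definition of $q$ is precisely to make the $s_{g_N}$ contributions cancel in this computation.

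With both hypotheses of Theorem \ref{c-length} verified — completeness of $(N,g_N)$ being assumed — that theorem yields at once that every $s\in\ker(\eth)\cap L^2(N,\Sigma N,g_N,\tau)$ has constant length and that $\dim_{\mathbb{C}}\bigl(\ker(\eth)\cap L^2(N,\Sigma N,g_N,\tau)\bigr)\leq\mathrm{rnk}_{\mathbb{C}}(\Sigma N)$. To finish I would recall that the complex rank of the spinor bundle over an $n$-dimensional manifold equals $2^{\lfloor n/2\rfloor}$, i.e. $2^{n/2}$ when $n$ is even and $2^{(n-1)/2}$ when $n$ is odd, which gives the stated bounds.

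I expect no serious obstacle here: once $q$ is chosen correctly the argument is a short verification together with the application of Theorem \ref{c-length}. The only genuinely delicate point is recognizing that one should take $q=\tfrac12\bigl(\mathrm{Ric}_g(\mathcal{N},\mathcal{N})+|A|^2\bigr)$, half of the stability potential, rather than the full potential; with the full potential the condition $L+q\geq 0$ would instead read $-\tfrac14 s_{g_N}+\tfrac12 s_g+\tfrac12|A|^2\geq 0$, which need not hold in general.
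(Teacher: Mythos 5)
Your proposal is correct and follows essentially the same route as the paper: Lichnerowicz formula giving $L=\tfrac14 s_{g_N}$, the choice $q=\tfrac12(\mathrm{Ric}_g(\mathcal{N},\mathcal{N})+|A|^2)$ verified against stability, the Gauss--Codazzi identity \eqref{GC} to get $L+q=\tfrac14(s_g+|A|^2)\geq 0$, and then Theorem \ref{c-length} together with the rank of the spinor bundle. Your convex-combination argument for $\Delta-q\geq 0$ is just a rephrasing of the scaling step the paper borrows from Prop.~\ref{conformal}.
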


\begin{proof}
Let $(\Sigma N, \tau)\rightarrow N$ be the spinor bundle of $(N,g_N)$ with respect to an arbitrarily fixed Riemannian spin structure and let $\eth:C^{\infty}(N,\Sigma N)\rightarrow C^{\infty}(N,\Sigma N)$ be the corresponding spin-Dirac operator. Let $\Delta^2_{\eth}:=\eth\circ \eth$ be the spin Laplacian and let $$\nabla^{\Sigma,t}:C^{\infty}(N,T^*N\otimes \Sigma N)\rightarrow C^{\infty}(N,\Sigma N)$$ be the formal adjoint w.r.t. $g$ and $\tau$ of the spinor connection $\nabla^{\Sigma}$. The Lichnerowicz formula  \cite[Th. 8.8]{Lawson} tells us that $$\Delta^2_{\eth}=\nabla^{\Sigma,t}\circ \nabla^{\Sigma}+\frac{1}{4}s_{g_N}$$ with  $s_{g_N}$ the scalar curvature of $(N,g_N)$. By arguing as in the proof of Prop. \ref{conformal}  we get that the second inequality in \eqref{sign} holds true for $q=\frac{\mathrm{Ric}_g(\mathcal{N},\mathcal{N})+|A|^2}{2}$, namely $$\Delta-\frac{1}{2}(\mathrm{Ric}_g(\mathcal{N},\mathcal{N})+|A|^2)$$ is non-negative on $C_c^{\infty}(N)$. Concerning the first  inequality in \eqref{sign}, {by \eqref{GC}},  we have 

\begin{align}
\label{wer}
\frac{1}{4}s_{g_N}+\frac{1}{2}(\mathrm{Ric}_g(\mathcal{N},\mathcal{N})+|A|^2)&=\frac{1}{4}s_{g_N}+\frac{1}{4}(s_g+|A|^2-s_{g_N})\\
\nonumber &=\frac{1}{4}(s_g+|A|^2)\geq 0 
\end{align}
 as $s_g\geq 0$. The conclusion now follows from Th. \ref{c-length} and the fact that the rank of $\Sigma N$ is $2^{n/2}$ if $n$ is even and $2^{(n-1)/2}$ if $n$ is odd, see e.g \cite[Prop. 1.2.1]{Gino}.
\end{proof}

\begin{cor}
\label{imm2}
In the setting of Th. \ref{spin} if in addition $(N,g_N)$ has infinite volume  then $(N,g_N)$ carries no non-trivial $L^2$-harmonic spinors. In particular this is the case if there exists a constant $0\neq \delta\in \mathbb{R}$ such that $s_g\geq \delta^2>0$.
\end{cor}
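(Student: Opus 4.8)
The plan is to deduce both parts of the statement directly from Theorem~\ref{spin} and from the consequences of Theorem~\ref{c-length} recorded in Corollary~\ref{immediately}, with no new computation.

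For the first part: by Theorem~\ref{spin} every $s\in\ker(\eth)\cap L^2(N,\Sigma N,g_N,\tau)$ has constant length, say $|s|_\tau\equiv c$ with $c\ge 0$. Hence $\|s\|^2_{L^2(N,\Sigma N,g_N,\tau)}=c^2\,\vol_{g_N}(N)$, which is finite only if $c=0$ when $\vol_{g_N}(N)=+\infty$; therefore $s\equiv 0$. (Equivalently, this is Corollary~\ref{immediately}(2) applied to the Dirac operator $D=\eth$.)

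For the ``in particular'' clause the plan is to recall which potentials were used in the proof of Theorem~\ref{spin}: there Theorem~\ref{c-length} was invoked for $D=\eth$ with the Lichnerowicz potential $L=\tfrac14 s_{g_N}$ and with $q=\tfrac12\bigl(\mathrm{Ric}_g(\mathcal N,\mathcal N)+|A|^2\bigr)$, the second inequality in \eqref{sign} being a rescaled form of stability (as in the proof of Prop.~\ref{conformal}), while the first, $L+q\ge 0$, followed from $s_g\ge 0$ via the Gauss--Codazzi identity \eqref{GC} --- which, as displayed in \eqref{wer}, in fact gives the sharper equality $L+q=\tfrac14\bigl(s_g+|A|^2\bigr)$. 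Thus, if $s_g\ge\delta^2>0$, then $L+q\ge\tfrac14\delta^2>0$ on all of $N$; in particular $L+q$ is positive at (at least) one point, so Corollary~\ref{immediately}(3) yields $\ker(\eth)\cap L^2(N,\Sigma N,g_N,\tau)=\{0\}$. (Alternatively, since $L+q$ is then bounded below by a positive constant, the cutoff argument from the proof of Corollary~\ref{immediately}(4) forces $\vol_{g_N}(N)=+\infty$, reducing everything to the first part.)

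I do not foresee a real difficulty; the only thing needing care is the bookkeeping --- checking that the hypotheses of Theorem~\ref{c-length}/Corollary~\ref{immediately} are exactly those established inside the proof of Theorem~\ref{spin}, and noting that passing from $s_g\ge 0$ to $s_g\ge\delta^2>0$ does nothing more than upgrade $L+q\ge 0$ to $L+q$ strictly positive.
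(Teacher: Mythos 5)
Your proposal is correct and takes essentially the same route as the paper: the first claim is exactly Corollary \ref{immediately}(2), and for the second the paper performs the same computation $L+q=\tfrac14\bigl(s_g+|A|^2\bigr)\geq \tfrac{\delta^2}{4}>0$ and then invokes Corollary \ref{immediately} (its fourth point, to force infinite volume, whereas you primarily cite the third point for direct vanishing --- both work, and you even note the point-4 alternative).
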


\begin{proof}
The first statement follows immediately from Cor. \ref{immediately}. If $s_g\geq \delta^2>0$ then we have $$\frac{1}{4}s_{g_N}+\frac{1}{2}(\mathrm{Ric}_g(\mathcal{N},\mathcal{N})+|A|^2)=\frac{1}{4}(s_g+|A|^2)\geq \frac{\delta^2}{4}>0.$$ Therefore by the fourth point of Cor. \ref{immediately} we can conclude that $\vol_{g_N}(N)=+\infty$.
\end{proof}

Before stating the next result we recall some further properties of spin geometry. Let $N$ be an oriented manifold of dimension $n$ and let $(M,g)$ be a Riemannian spinnable manifold of dimension $m=n+1$. If there exists an immersion $f:N\rightarrow M$ then $N$ is also spinnable, see e.g. \cite[p. 20]{Gino}. Since $(\mathbb{R}^{n+1},g_{e})$, with $g_{e}$ denoting the standard Euclidean metric, is a Riemannian spinnable manifold, by the above property we can deduce that every oriented $n$-dimensional immersed manifold $f:N\rightarrow (\mathbb{R}^{n+1},g_{e})$ is also spinnable. We have now all the ingredients to prove the following {\em spinorial version of Tanno's problem}:

\begin{teo}\label{spin-cor}
Let $N$ be an oriented manifold with $\dim(N)=n$. If there exists a stable minimal immersion $$f:N\rightarrow (\mathbb{R}^{n+1},g_e)$$ such that $(N,g_N)$ is complete then $(N,g_N)$ carries no non-trivial $L^2$-harmonic spinors.
\end{teo}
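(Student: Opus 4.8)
The plan is to deduce the statement from Corollary \ref{imm2} by showing that $(N,g_N)$ has infinite volume. First, since $(\mathbb{R}^{n+1},g_e)$ is spinnable and $f$ is an immersion of the oriented manifold $N$ into it, $N$ is itself spinnable; I would fix an arbitrary Riemannian spin structure on $N$, form the associated spinor bundle $(\Sigma N,\tau)\to N$ and spin-Dirac operator $\eth$, and show $\ker(\eth)\cap L^2(N,\Sigma N,g_N,\tau)=\{0\}$ for this choice (by definition this is all that is needed). Since $s_{g_e}\equiv 0\geq 0$, Theorem \ref{spin} applies and every $L^2$-harmonic spinor has constant length, so by Corollary \ref{imm2} the proof reduces to checking $\vol_{g_N}(N)=+\infty$; one may argue connected component by connected component.

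For the infinite volume I would give two arguments, the quickest being the monotonicity formula for minimal submanifolds: a complete minimal $n$-dimensional submanifold of $\mathbb{R}^{n+1}$ passing through a point $p$ satisfies $\vol_{g_N}\big(B^N_r(p)\big)\geq \omega_n r^n$ for every $r>0$, where $\omega_n$ denotes the volume of the Euclidean unit $n$-ball, whence $\vol_{g_N}(N)=+\infty$ and Corollary \ref{imm2} concludes. A self-contained alternative, using only Section 1, is to recall the potential appearing in the proof of Theorem \ref{spin}, which here (the ambient being flat, so $\mathrm{Ric}_{g_e}=0$) reduces to $q=\frac{1}{2}|A|^2\geq 0$, with $\frac{1}{4}s_{g_N}+q=\frac{1}{4}|A|^2\geq 0$. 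If $f$ is not totally geodesic, then $q$ is somewhere positive and part (4) of Corollary \ref{immediately} gives $\vol_{g_N}(N)=+\infty$ (in fact directly $\ker(\eth)\cap L^2=\{0\}$). If $f$ is totally geodesic, then $f$ maps $g_N$-geodesics to straight lines; by completeness of $(N,g_N)$ and Hopf--Rinow, $f(N)$ is then an affine $n$-plane $\Pi\cong\mathbb{R}^n$ and $f:(N,g_N)\to\Pi$ is a local isometry from a complete manifold, hence a Riemannian covering, so $(N,g_N)$ is isometric to $\mathbb{R}^n$ since the latter is simply connected, and $\vol_{g_N}(N)=+\infty$ again.

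I expect no serious obstacle in this proof: the content is entirely carried by Theorem \ref{spin} and Corollary \ref{immediately}, and what remains is elementary. The only step deserving care is the totally geodesic case in the second argument --- one must rule out, for instance, a flat torus, and this is exactly where completeness is needed, to pass from ``geodesics go to lines'' to ``the image is a whole hyperplane'' and then to a covering of $\mathbb{R}^n$. Invoking the monotonicity formula makes even this point unnecessary, so I would present the monotonicity route as the main argument and keep the case analysis as a fallback requiring nothing beyond the paper's own results.
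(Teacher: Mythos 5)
Your proposal is correct and follows essentially the same route as the paper: establish that $N$ is spinnable, show that $(N,g_N)$ has infinite volume because it is a complete minimal submanifold of Euclidean space, and conclude by Corollary \ref{imm2}. The only difference is the justification of the infinite-volume step --- the paper cites the Michael--Simon Sobolev inequality together with Hebey's local volume estimate, while you invoke the (equally standard) monotonicity formula, with a correct self-contained fallback via the dichotomy ``not totally geodesic'' versus ``a covering of a hyperplane''.
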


\begin{proof}
Thanks to the above observation we know that $N$ is spinnable. Since $f:N\rightarrow (\mathbb{R}^{n+1},g_e)$ is minimal  and $(N,g_N)$ is complete we know that $(N,g_N)$ has infinite volume. This is well known and for instance follows easily by the fact that $(N,g_N)$ carries a Sobolev embedding, see \cite{Michael}, and the local volume estimate for complete manifold supporting a Sobolev embedding, see \cite[Lemma 2.2]{Hebey}.  Now the conclusion follows immediately by Cor. \ref{imm2}.
\end{proof}

\begin{cor}
\label{s-flat}
Let $(M,g)$ be a Riemannian manifold with $s_g\geq 0$. Let $N$ be spinnable manifold with $\dim(N)+1=\dim(M)$ such that there exists a two-sided, stable, minimal immersion $f:N\rightarrow (M,g).$ If $N$ carries a non-trivial $L^2$-harmonic spinor then $f:N\rightarrow (M,g)$ is totally geodesic and 
\begin{equation}
\label{RicN}s_{g_N}=-2\mathrm{Ric}_g(\mathcal{N},\mathcal{N}).
\end{equation}
\end{cor}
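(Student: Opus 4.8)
The plan is to re-examine the proof of Th. \ref{spin} and read off what must happen in the borderline case. From that proof, recall that if $L:=\frac14 s_{g_N}\cdot\id$ denotes the zero-order term of $\Delta^2_{\eth}=\eth\circ\eth$ furnished by the Lichnerowicz formula, and $q:=\frac12\big(\mathrm{Ric}_g(\mathcal{N},\mathcal{N})+|A|^2\big)$, then the pair $(L,q)$ satisfies both sign conditions in \eqref{sign}: the inequality $\langle(\Delta-q)\phi,\phi\rangle_{L^2(N,g_N)}\geq0$ holds for all $\phi\in C^{\infty}_c(N)$ by stability, while \eqref{wer} gives $L+q=\frac14\big(s_g+|A|^2\big)\cdot\id\geq0$, since $s_g\geq0$. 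Thus, $(N,g_N)$ being complete, we are in the setting of Th. \ref{c-length}, and its corollaries apply.

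Next I would invoke Cor. \ref{immediately}(3) contrapositively: since $\ker(\eth)\cap L^2(N,\Sigma N,g_N,\tau)\neq\{0\}$ by hypothesis, the endomorphism $L+q$ must be positive at no point of $N$. Now $L+q=\frac14\big(s_g+|A|^2\big)\cdot\id$ is a pointwise non-negative \emph{scalar} endomorphism, so ``positive at no point'' is the same as saying that the function $s_g\circ f+|A|^2$ vanishes identically on $N$. Because $s_g\geq0$ and $|A|^2\geq0$ everywhere, the two non-negative summands must vanish separately: $|A|\equiv0$, i.e. $f:N\rightarrow(M,g)$ is totally geodesic, and $s_g|_{f(N)}\equiv0$.

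It then remains to substitute $|A|\equiv0$ and $s_g|_{f(N)}\equiv0$ into the identity \eqref{GC}, namely $\mathrm{Ric}_g(\mathcal{N},\mathcal{N})+|A|^2=\frac12\big(-s_{g_N}+s_g+|A|^2\big)$; this collapses to $\mathrm{Ric}_g(\mathcal{N},\mathcal{N})=-\frac12 s_{g_N}$, which is \eqref{RicN}, finishing the argument.

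I do not anticipate a genuine obstacle here, since all the analytic work is already contained in Section~1; the only step that deserves a word of care is the observation that $L+q$ is a scalar multiple of the identity, so that the ``positivity at a point'' in Cor. \ref{immediately}(3) is equivalent to pointwise positivity of the function $s_g+|A|^2$. If one preferred to bypass Cor. \ref{immediately}, an equivalent route is available: by Th. \ref{spin} a non-trivial $L^2$-harmonic spinor $s$ has constant length $c>0$, and plugging $|s|_{\tau}\equiv c$ into \eqref{zerozeroo} from the proof of Th. \ref{c-length}, monotone convergence yields $\frac{c^2}{4}\int_N\big(s_g+|A|^2\big)\,\dvol_{g_N}=0$, from which the same two conclusions follow.
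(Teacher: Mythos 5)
Your argument is correct and coincides with the paper's own proof: both apply Cor. \ref{immediately}(3) contrapositively together with \eqref{wer} to force $s_g+|A|^2\equiv 0$ along $f(N)$, hence $|A|\equiv 0$ and $s_g|_{f(N)}\equiv 0$, and then conclude via the Gauss--Codazzi identity \eqref{GC}. Your remark that $L+q$ is a scalar endomorphism, and the alternative route through \eqref{zerozeroo}, are fine but add nothing beyond what the paper does.
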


\begin{proof}
By the third point of Cor. \ref{immediately} and \eqref{wer} we get that $|A|$ vanishes  and thus $f:N\rightarrow (M,g)$ is totally geodesic. For the same reason $s_g|_N$ vanishes. Now the Gauss-Codazzi equation for the scalar curvature tells us that 
$$s_{g_N}=-2\mathrm{Ric}_g(\mathcal{N},\mathcal{N})$$
as required.
\end{proof}

\begin{cor}
\label{s-flat}
Let $(M,g)$ be a Riemannian manifold with $s_g\geq 0$. Let $N$ be a compact spin manifold with $\dim(N)+1=\dim(M)$ such that there exists a two-sided, stable minimal immersion $f:N\rightarrow (M,g).$
\begin{enumerate}
\item If $n$ is even the $\hat{A}$-genus of $N$ satisfies the upper bound $$|\hat{A}(N)|\leq 2^{\frac{n-2}{2}}.$$ If in addition $\pi_1(N)$ is infinite then $$\hat{A}(N)=0.$$ 
\item If $\hat{A}(N)\neq 0$ and $\mathrm{Ric}_g(\mathcal{N},\mathcal{N})\geq 0$  then $(N,g_N)$ is Ricci-flat with $\pi_1(N)$ finite and $H^1(N,\mathbb{R})=\{0\}$. Moreover $(M,g)$ is also Ricci-flat.
\end{enumerate}
\end{cor}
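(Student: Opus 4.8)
The plan is to exploit the compactness of $N$ to feed everything back into the results already established for the spin--Dirac operator. On a compact manifold every $L^2$--harmonic spinor is a smooth harmonic spinor, so Th.~\ref{spin} applies verbatim and each $\psi\in\ker(\eth)$ has constant length. Recall also the $\mathbb{Z}_2$--splitting $\Sigma N=\Sigma^{+}N\oplus\Sigma^{-}N$ into half--spinor subbundles, each of rank $2^{(n-2)/2}$ for $n$ even, with $\eth$ interchanging $C^\infty(N,\Sigma^{\pm}N)$, so that $\ker(\eth)=\ker(\eth^{+})\oplus\ker(\eth^{-})$ where $\ker(\eth^{\pm}):=\ker(\eth)\cap C^\infty(N,\Sigma^{\pm}N)$; and recall that, in the notation of Th.~\ref{c-length}, the proof of Th.~\ref{spin} uses $L=\tfrac14 s_{g_N}$, $q=\tfrac12(\mathrm{Ric}_g(\mathcal N,\mathcal N)+|A|^2)$, with $L+q=\tfrac14(s_g+|A|^2)\ge 0$ and $\Delta-q\ge 0$ on $C^\infty_c(N)$ (this is \eqref{wer}). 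For the bound in (1), the Atiyah--Singer index theorem gives $\hat A(N)=\ind(\eth^{+})=\dim\ker(\eth^{+})-\dim\ker(\eth^{-})$, while Cor.~\ref{immediately}(1) applied to the subbundles $\Sigma^{\pm}N$ yields $\dim\ker(\eth^{\pm})\le\mathrm{rnk}(\Sigma^{\pm}N)=2^{(n-2)/2}$; since both ranks are non--negative, $|\hat A(N)|=|\dim\ker(\eth^{+})-\dim\ker(\eth^{-})|\le 2^{(n-2)/2}$.

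For the vanishing statement in (1) I argue by contraposition: suppose $\hat A(N)\ne 0$. Then there is a non--trivial harmonic spinor $\psi$, so by Cor.~\ref{immediately}(3) and \eqref{wer} the function $L+q=\tfrac14(s_g+|A|^2)$ vanishes identically, i.e.\ $f$ is totally geodesic and $s_g|_{f(N)}=0$; by \eqref{GC} this forces $s_{g_N}=-2\,\mathrm{Ric}_g(\mathcal N,\mathcal N)$. Integrating the Lichnerowicz formula over the compact $N$ and using that $|\psi|$ is constant,
\[
0=\|\nabla^{\Sigma}\psi\|^2_{L^2}+\tfrac14\int_N s_{g_N}|\psi|^2\,\dvol_{g_N}=\|\nabla^{\Sigma}\psi\|^2_{L^2}-\tfrac12|\psi|^2\int_N\mathrm{Ric}_g(\mathcal N,\mathcal N)\,\dvol_{g_N},
\]
whereas testing the stability inequality $\Delta-\tfrac12\mathrm{Ric}_g(\mathcal N,\mathcal N)\ge 0$ (note $|A|\equiv 0$) against the constant function $1$ gives $\int_N\mathrm{Ric}_g(\mathcal N,\mathcal N)\,\dvol_{g_N}\le 0$. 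Hence $\|\nabla^{\Sigma}\psi\|_{L^2}=0$, $\psi$ is parallel, and therefore $(N,g_N)$ is Ricci--flat. By the Cheeger--Gromoll structure theorem a compact Ricci--flat manifold has a finite cover isometric to a Riemannian product $T^k\times X$ with $X$ simply connected; since the $\hat A$--genus is multiplicative under products, $\hat A(T^k)=0$ for $k\ge 1$, and the $\hat A$--genus of a $d$--sheeted cover equals $d\,\hat A(N)$, the assumption $\hat A(N)\ne 0$ forces $k=0$, i.e.\ $\pi_1(N)$ is finite --- contradicting the hypothesis that $\pi_1(N)$ be infinite.

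For (2), assume $\hat A(N)\ne 0$. The previous paragraph already gives that $f$ is totally geodesic, $(N,g_N)$ is Ricci--flat with $\pi_1(N)$ finite (hence $H^1(N,\mathbb{R})\cong\Hom(\pi_1(N),\mathbb{R})=0$), and that the displayed chain is an equality, so $\int_N\mathrm{Ric}_g(\mathcal N,\mathcal N)\,\dvol_{g_N}=0$. Adding now the hypothesis $\mathrm{Ric}_g(\mathcal N,\mathcal N)\ge 0$, this integral identity upgrades to $\mathrm{Ric}_g(\mathcal N,\mathcal N)\equiv 0$ along $f(N)$; the Codazzi equation for the totally geodesic $N$ then gives $\mathrm{Ric}_g(\mathcal N,X)\equiv 0$ along $f(N)$ for $X$ tangent, and the Gauss equation together with $\mathrm{Ric}_{g_N}=0$ gives $\mathrm{Ric}_g(X,Y)=R_g(X,\mathcal N,\mathcal N,Y)$ for $X,Y$ tangent. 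It remains to show that this term vanishes, i.e.\ that $(M,g)$ is Ricci--flat along $f(N)$, with a tubular neighbourhood of $f(N)$ splitting as a Riemannian product $N\times(-\varepsilon,\varepsilon)$. For this I would run a Geroch--type rigidity argument: since $|A|^2+\mathrm{Ric}_g(\mathcal N,\mathcal N)\equiv 0$, the positive function $1$ is a Jacobi field on $N$, so the leaves $N_t$ of the normal--geodesic foliation of $f(N)$ have mean curvature $H_t=O(t^2)$; combining the Riccati identity $\partial_t H_t=|A_t|^2+\mathrm{Ric}_g(\mathcal N_t,\mathcal N_t)$, the Gauss identity $s_g|_{N_t}=s_{g_{N_t}}+2\,\mathrm{Ric}_g(\mathcal N_t,\mathcal N_t)+|A_t|^2-H_t^2$, and the first variation of the area element, the function $t\mapsto\int_{N_t}s_g\,\dvol_{g_{N_t}}$ is monotone near $t=0$, and $s_g\ge 0$ then forces each $N_t$ to be totally geodesic with $s_g|_{N_t}\equiv 0$ and $\mathrm{Ric}_g(\mathcal N_t,\mathcal N_t)\equiv 0$; the neighbourhood is then a Riemannian product over the Ricci--flat $N$, whence $\mathrm{Ric}_g=0$ on it.

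Everything drawn from the first section (the subbundle estimate, constancy of $|\psi|$, total geodesy, $s_g|_{f(N)}=0$) and the index--theoretic input are immediate; the genuinely delicate step --- the only one not already contained in the quoted theorems --- is the last one, namely the passage from the infinitesimal rigidity $|A|^2+\mathrm{Ric}_g(\mathcal N,\mathcal N)\equiv 0$ to the actual local splitting of $(M,g)$ near $f(N)$, and hence to its Ricci--flatness. A potentially cleaner route, worth checking, would be to deduce $R_g(\,\cdot\,,\mathcal N,\mathcal N,\,\cdot\,)\equiv 0$ along $f(N)$ directly from the parallel spinor $\psi$ via the Clifford--contraction identity $\sum_{\alpha}e_{\alpha}\cdot R^{\Sigma}(e_{\alpha},\,\cdot\,)\psi=-\tfrac12\,\mathrm{Ric}_g(\,\cdot\,)\cdot\psi$ for an orthonormal frame $\{e_{\alpha}\}$ of $TM$ along $f(N)$, which would bypass the foliation argument entirely.
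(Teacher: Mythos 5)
Your part~(1) is correct. The index bound is exactly the paper's argument (Atiyah--Singer plus the subbundle rank estimate of Cor.~\ref{immediately}). For the vanishing when $\pi_1(N)$ is infinite you take a genuinely different route: the paper lifts $f$ to the universal cover, applies Cor.~\ref{imm2} there (the cover is non-compact, hence of infinite volume, so it carries no non-trivial $L^2$-harmonic spinors) and concludes by Atiyah's $L^2$-index theorem \cite[Th. 3.8]{Atiyah}; you instead run the rigidity chain (total geodesy, $s_g|_{f(N)}=0$, integrated Lichnerowicz against the stability inequality tested on the constant $1$, parallel spinor, hence $\mathrm{Ric}_{g_N}=0$) and then invoke Cheeger--Gromoll together with multiplicativity of $\hat A$ under finite covers and products. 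That chain is valid and in fact yields Ricci-flatness of $(N,g_N)$ \emph{without} the hypothesis $\mathrm{Ric}_g(\mathcal{N},\mathcal{N})\ge 0$, which is slightly stronger than what the paper extracts at that stage (the paper only gets scalar-flatness of $N$ from \eqref{RicN} and then upgrades to Ricci-flatness via the obstruction $\hat A(N)\neq 0$ and \cite[Th. 2.30]{Lee}). The first half of your part~(2) ($\mathrm{Ric}_g(\mathcal{N},\mathcal{N})\equiv 0$ from the integral identity, finiteness of $\pi_1$, $H^1(N,\mathbb{R})=0$) is likewise fine.

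The genuine gap is the last claim of part~(2), that $(M,g)$ is Ricci-flat. What you offer is an announced but unexecuted Geroch-type foliation argument, and it cannot be repaired into a proof of the stated conclusion for two reasons. First, even granting the local splitting of a tubular neighbourhood of $f(N)$, you would only obtain $\mathrm{Ric}_g=0$ \emph{near} $f(N)$, whereas the corollary asserts Ricci-flatness of $(M,g)$ as a whole; no local analysis along the hypersurface can reach points of $M$ far from $f(\Sigma)$. Second, the intermediate steps you sketch ($H_t=O(t^2)$ from the constant Jacobi field, monotonicity of $t\mapsto\int_{N_t}s_g\,\dvol_{g_{N_t}}$) are the content of non-trivial infinitesimal-to-local rigidity theorems and are not justified here; your proposed shortcut via the Clifford contraction identity also fails, because the parallel spinor lives in the \emph{intrinsic} spinor bundle of $(N,g_N)$ and its curvature identity controls $\mathrm{Ric}_{g_N}$, not the ambient curvature in the normal direction. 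The paper's argument is global and topological: by \cite{SchoenYau}, if $M$ admitted a metric of positive scalar curvature then so would $N$, which is impossible since $\hat A(N)\neq 0$; hence $M$ carries no positive scalar curvature metric, and since $s_g\ge 0$, \cite[Th. 2.30]{Lee} forces $(M,g)$ to be Ricci-flat. You should replace your final step by this obstruction argument.
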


\begin{proof}
First of all we recall that $\hat{A}(N)$, the $\hat{A}$-genus of $N$, is a characteristic number of $N$ defined as the integral over $N$ of a certain combination of Pontryagin classes, see \cite[p. 138]{Lawson} for a precise definition.  The first assertion is clearly an immediate consequence of the celebrated   Atiyah-Singer index theorem for the spin-Dirac operator. We give a brief comment about it and we refer to \cite{Lawson} for details. Let $(\Sigma N,\tau)\rightarrow N$ be an arbitrarily fixed spinor bundle over $N$. Then $\Sigma N$ decomposes as $\Sigma N=\Sigma N^+\oplus \Sigma N^-$, with $\Sigma N^+$ and $ \Sigma N^-$ the spinor bundle of positive/negative chirality, and the spin-Dirac operator $\eth$ satisfies $$\eth|_{C^{\infty}(N,\Sigma N^{\pm})}:C^{\infty}(N,\Sigma N^{\pm})\rightarrow C^{\infty}(N,\Sigma N^{\mp}).$$ The Atiyah-Singer index theorem then shows that $$\hat{A}(N)=\dim\left(\ker(\eth|_{C^{\infty}(N,\Sigma N^+)})\right)-\dim\left(\ker(\eth|_{C^{\infty}(N,\Sigma N^-)})\right).$$
Consequently we have $$\hat{A}(N)\leq \dim\left(\ker(\eth|_{C^{\infty}(N,\Sigma N^+)})\right)\quad \mathrm{and}\quad  -\hat{A}(N)\leq \dim\left(\ker(\eth|_{C^{\infty}(N,\Sigma N^-)})\right).$$ Snce $\Sigma N^+$ and $\Sigma N^-$ are both complex vector bundles of complex rank $2^{\frac{n-2}{2}}$, we can conclude by Cor. \ref{immediately} that $|\hat{A}(N)|\leq 2^{\frac{n-2}{2}}$. 
Assume now that $\pi_1(N)$ is infinite. Let $\tilde{N}\rightarrow N$ be the universal covering of $N$ and let $\tilde{f}:\tilde{N}\rightarrow M$ be the lift of $f$. According to \cite[p. 46]{ColMin} we know that $\tilde{f}:\tilde{N}\rightarrow (M,g)$ is still a stable minimal immersion. Therefore, as $\tilde{N}$ is still spinnable, we can apply Cor. \ref{imm2} to conclude that there are no non-trivial $L^2$-harmonic spinors on $(\tilde{N},g_{\tilde{N}})$. Finally by Atiyah's $L^2$-index theorem \cite[Th. 3.8]{Atiyah}, we can conclude that $\hat{A}(N)=0$. We now deal with the { second} point. Since $\hat{A}(N)\neq 0$ we know that on $(N,g_N)$ there exists a  non-trivial harmonic spinor $s$. Note now that since $N$ is compact we can pick the constant sequence of functions $\phi_n=1$ in the proof of Th. \ref{c-length}. Thus \eqref{zerozeroo} gives in this case
$$\int_{N}(|d|s|_{\rho}|_g^2-(\mathrm{Ric}_g(\mathcal{N},\mathcal{N})+|A|^2)|s|^2_{\rho})\dvol_g=0,\quad\quad \int_N(\rho((\mathrm{Ric}_g(\mathcal{N},\mathcal{N})+|A|^2)s,s))\dvol_g=0.$$ By Th. \ref{spin} we know that $|s|_{\rho}$ is constant and by Cor. \ref{s-flat} we know that $|A|$ vanishes. Therefore the first equality above boils down to $$\int_{N}(\mathrm{Ric}_g(\mathcal{N},\mathcal{N}))|s|^2_{\rho})\dvol_g=0$$ and since $|s|_{\rho}\neq 0$ and $\mathrm{Ric}_g(\mathcal{N},\mathcal{N})\geq 0$ we can conclude that $\mathrm{Ric}_g(\mathcal{N},\mathcal{N})=0$. Hence by \eqref{RicN} we obtain that $(N,g_N)$ is scalar flat. Furthermore we know that $N$ carries no Riemannian metrics with positive scalar curvature since $\hat{A}(N)\neq 0$, see \cite[Cor. 8.9]{Lawson}. Therefore by \cite[Th. 2.30]{Lee} we can conclude that $(N,g_N)$ is actually Ricci-flat. Moreover by the first statement of this corollary we know that $\pi_1(N)$ is finite and by  \cite[Th. 4.1]{FWolf} we obtain that $H^1(N,\mathbb{R})=\{0\}$. Finally, according to \cite{SchoenYau}, if $M$ carries a metric with positive scalar curvature then also $N$ carries a metric with positive scalar curvature. However this is not possible since we assumed that $\hat{A}(N)\neq 0$. Therefore, by applying again  \cite[Th. 2.30]{Lee}, we can conclude that $(M,g)$ is Ricci flat, as well.
\end{proof}

We revolve now our attention to the vanishing of $L^2$-harmonic forms. Given a {\em complete} Riemannian manifold $(N,g)$ we recall that the space of $L^2$-harmonic forms of degree $k$ is defined as $$\mathcal{H}^k_2(N,g):=\ker(\Delta_k)\cap L^2\Omega^k(N,g)$$ where $$\Delta_k:\Omega^k(N,g)\rightarrow \Omega^k(N,g)$$ denotes the Hodge Laplacian. Since $$\Delta_k:L^2\Omega^k(N,g)\rightarrow L^2\Omega^k(N,g)$$ with initial domain $\Omega_c^k(N)$ is essentially self-adjoint, we get that 
\begin{equation}
\label{equivalent}
\mathcal{H}^k_2(N,g)=\ker(d_k)\cap \ker(d^t_{k-1})\cap L^2\Omega^k(N,g)
\end{equation}
with $d^t_{k-1}:\Omega^{k}(N)\rightarrow \Omega^{k-1}(N)$ the formal adjoint of $d_{k-1}:\Omega^{k-1}(N)\rightarrow \Omega^k(N)$ w.r.t. $g$. Let us consider now the Dirac operator $$d+d^t:\Omega^{\bullet}(N)\rightarrow \Omega^{\bullet}(N)$$ with $\Omega^{\bullet}(N)=\oplus_k\Omega^{k}(N)$ and $d+d^t|_{\Omega^k(N)}=d_k+d_{k-1}^t$, usually called the Hodge-de Rham operator or the Gauss-Bonnet operator. Then \eqref{equivalent} can be reformulated as $$\mathcal{H}^k_2(N,g)=\ker(d+d^t)\cap L^2\Omega^k(N,g).$$ The advantage of bringing $d+d^t$ into the description of the space of $L^2$-harmonic forms is that now we can use Th. \ref{c-length} to deal with their vanishing over a two-sided, stable, minimally immersed hypersurface. More precisely let $(M,g)$ be a Riemannian manifold of dimension $m+1$ and let $N$ be an oriented manifold of dimension $m$ which carries a two-sided, stable, minimal immersion $f:N\rightarrow (M,g)$ such that $(N,g_N)$ is complete. The vanishing of $\mathcal{H}^0_2(N,g_N)$ holds if and only if $(N,g_N)$ has infinite volume while the vanishing of $\mathcal{H}^1_2(N,g_N)$ has been already extensively studied, see e.g. \cite{Miyaoka}, \cite{Palmer} and \cite{Tanno}. In particular it is known that $\mathcal{H}^1_2(N,g_N)=\{0\}$, provided $(M,g)$ has positive bi-Ricci curvature, see \cite[Th. 4.1]{Tanno}. In this paper we are therefore interested in the behaviour of $\mathcal{H}^k_2(N,g_N)$ when $k\geq 2$. Before stating our main result we need to introduce some notation and recall some preliminary results from \cite{Savo}. Let $(M,g)$ be a Riemannian manifold of dimension $m$ and let $f:N\rightarrow M$ be an immersed submanifold of dimension $n$. According to the Weitzenb\"ock formula the Hodge Laplacian on $(N,g_N)$ acting  on $p$-forms decomposes as $$\Delta_p=\nabla^t\circ \nabla+\mathcal{B}_p$$ with $\nabla:\Omega^p(N)\rightarrow C^{\infty}(N,T^*N\otimes \Lambda^kT^*N)$ the connection induced by the Levi-Civita connection and $\mathcal{B}_p$ a suitable endomorphism of $\Lambda^kT^*M$ that can be expressed in term of the curvature operator of $(N,g_N)$ and the Clifford multiplication, see e.g. \cite[Th. 50]{Peter}. According to \cite[Th. 1]{Savo} $\mathcal{B}_p$ can be decomposed as $\mathcal{B}_p=\mathcal{B}_{p,\mathrm{ext}}+\mathcal{B}_{p,\mathrm{res}}$, with $\mathcal{B}_{p,\mathrm{ext}}$ and $\mathcal{B}_{p,\mathrm{res}}$ two further endomorphisms of $\Lambda^kT^*N$ with the following properties: $\mathcal{B}_{p,\mathrm{res}}$ depends only on $\mathcal{R}_g$, the curvature operator of $(M,g)$, and if $\mathcal{R}_g\geq \gamma$ with $\gamma\in \mathbb{R}$ then  $\mathcal{B}_{p,\mathrm{res}}\geq p(n-p)\gamma$. Conversely
$\mathcal{B}_{p,\mathrm{ext}}$ depends only on the second fundamental form of $f:N\rightarrow (M,g)$. In particular if we denote with $k_1,...,k_n$ the principal curvatures of $N$, then we have the estimate 
\begin{equation}
\label{pcc}
\mathcal{B}_{p,\mathrm{ext}}\geq \min_{\alpha\subseteq \{1,...,n\},\ |\alpha|=p} K_{\alpha}K_{\star\alpha}
\end{equation}
with $\alpha=\{j_1,...,j_p\}\subseteq \{1,...,n\}$, $K_{\alpha}=k_{j_1}+...+k_{j_p}$ and $K_{\star\alpha}$  defined analogously with respect to $\star\alpha:=\{1,...,n\}\setminus \alpha$. We are now in the position to state the next theorem.

\begin{teo}
\label{noL2forms}
Let $(M,g)$ be a Riemannian manifold of dimension $m+1$ and let $\Sigma$ be an oriented manifold of dimension $m$ with a two-sided, stable, minimal immersion $f:\Sigma\rightarrow (M,g)$ such that $(\Sigma,g_{\Sigma})$ is complete. { Let $2\leq p\leq \frac m2$}.
\begin{enumerate}
\item  If $m\geq 4$ and $\mathcal{R}_g\geq0$ (or more generally there exists $\gamma\in \mathbb{R}$ such that $\mathcal{R}_g\geq \gamma$ and $p(m-p)\gamma+\mathrm{Ric}_g(\mathcal{N},\mathcal{N})\geq 0$) and  $|A|^2-K^2_{\alpha}\geq 0$ for any $\alpha\subset \{1,...,m\}$ with $|\alpha|=p$, then every $L^2$-harmonic $p$-form on $(\Sigma,g_\Sigma)$ has constant length.  If in addition $\Sigma$ is not totally geodesic or $\mathrm{Ric}_g(\mathcal{N},\mathcal{N})$ is somewhere positive on $f(\Sigma)$, then $$\mathcal{H}^p_2(\Sigma,g_{\Sigma})=\mathcal{H}^{m-p}_2(\Sigma,g_{\Sigma})=\{0\}.$$
\item If $m\geq 6$ and $\mathrm{sec}_g\in [a,b]$ with $0<a\leq b\leq \varepsilon_{m,p}a$ with $\varepsilon_{m,p}$ the constant defined in \eqref{nonsharp} and $|A|^2-K^2_{\alpha}\geq 0$ for any $\alpha\subset \{1,...,m\}$ with $|\alpha|=p$, then $$\mathcal{H}^p_2(\Sigma,g_{\Sigma})=\mathcal{H}^{m-p}_2(\Sigma,g_{\Sigma})=\{0\}.$$
\item If $m\geq 6$ and $\mathrm{sec}_g\in [a,b]$ with $0<a\leq b\leq c_ma$, $c_m$ the constant defined in \eqref{sharp} and $|A|^2-K^2_{\alpha}\geq 0$ for any $\alpha\subset \{1,...,m\}$ with $|\alpha|=2$, then $$\mathcal{H}^2_2(\Sigma,g_{\Sigma})=\mathcal{H}^{m-2}_2(\Sigma,g_{\Sigma})=\{0\}.$$
\end{enumerate}
\end{teo}

\begin{proof}
First of all we point out that thanks to the $L^2$-Poincar\'e duality it is enough to prove that $\mathcal{H}^p_2(\Sigma,g_{\Sigma})=\{0\}$. Consider now the Weitzenb\"ock formula  for the Hodge Laplacian on $(\Sigma,g_\Sigma)$ acting  on $p$-forms  $$\Delta_p=\nabla^t\circ \nabla+\mathcal{B}_p.$$ 
Since $f:\Sigma\rightarrow (M,g)$ is two-sided and stable we know that the second inequality in \eqref{sign} holds true for $q=\mathrm{Ric}_g(\mathcal{N},\mathcal{N})+|A|^2$, that is $$\Delta-\mathrm{Ric}_g(\mathcal{N},\mathcal{N})-|A|^2$$ is non-negative on $C_c^{\infty}(\Sigma)$. Concerning the first  inequality in \eqref{sign}  we have $$\mathcal{B}_p+\mathrm{Ric}_g(\mathcal{N},\mathcal{N})+|A|^2=\mathcal{B}_{p,\mathrm{res}}+\mathcal{B}_{p,\mathrm{ext}}+\mathrm{Ric}_g(\mathcal{N},\mathcal{N})+|A|^2.$$
As recalled above, see \cite[Th. 1]{Savo}, if there exists $\gamma\in \mathbb{R}$ such that $\mathcal{R}_g\geq \gamma$ then $\mathcal{B}_{p,\mathrm{res}}\geq p(m-p)\gamma$, see also \cite{Gallot}. Therefore we obtain $$\mathcal{B}_{p,\mathrm{res}}+\mathrm{Ric}_g(\mathcal{N},\mathcal{N})\geq \gamma p(m-p)+\mathrm{Ric}_g(\mathcal{N},\mathcal{N})\geq 0.$$ In particular $\mathcal{B}_{p,\mathrm{res}}+\mathrm{Ric}_g(\mathcal{N},\mathcal{N})\geq0$ whenever $\mathcal{R}_g\geq 0$. Indeed in this case we get that both $$\mathrm{Ric}_g(\mathcal{N},\mathcal{N})\geq 0\quad \mathrm{and}\quad \mathcal{B}_{p,\mathrm{res}}\geq 0.$$ 
For the other term we have 
\begin{equation}
\label{ext}
\mathcal{B}_{p,\mathrm{ext}}+|A|^2\geq |A|^2+\min_{\alpha\subseteq \{1,...,m\},\ |\alpha|=p} K_{\alpha}K_{\star\alpha}\geq |A|^2+\min_{\alpha\subseteq \{1,...,m\},\ |\alpha|=p} -K^2_{\alpha}\geq 0
\end{equation}
as $K_{\star \alpha}=-K_{\alpha}$ given that $f:\Sigma\rightarrow (M,g)$ is minimal and $|A|^2-K^2_{\alpha}\geq 0$ for any $\alpha\subset \{1,...,m\}$ with $|\alpha|=p$, by assumptions. Thus by Th. \ref{c-length} we can conclude that every $L^2$-harmonic $p$-form on $(\Sigma,g_{\Sigma})$ has constant length. Note now that the inequality $\gamma p(m-p)+\mathrm{Ric}_g(\mathcal{N},\mathcal{N})\geq 0$ implies  $\mathrm{Ric}_g(\mathcal{N},\mathcal{N})\geq 0$ regardless the sign of $\gamma$. Indeed it is obvious that $
\mathrm{Ric}_g(\mathcal{N},\mathcal{N})\geq 0$ if $\gamma<0$ whereas if $\gamma\geq 0$  then $\mathcal{R}_g\geq 0$ and thus $
\mathrm{Ric}_g(\mathcal{N},\mathcal{N})\geq 0$. Therefore if  in addition we assume that $\Sigma$ is not totally geodesic or $\mathrm{Ric}_g(\mathcal{N},\mathcal{N})$ is somewhere positive on $f(\Sigma)$, we can then apply the fourth point of Cor. \ref{immediately} with $q=|A|^2+\mathrm{Ric}_g(\mathcal{N},\mathcal{N})$ to conclude that $$\vol_{g_{\Sigma}}(\Sigma)=+\infty\quad\quad \mathrm{and}\quad\quad \mathcal{H}^p_2(\Sigma,g_{\Sigma})=\{0\}.$$ 
The proof of the first point is thus complete. Let us tackle now the second point and let
\begin{equation}
\label{nonsharp}
\varepsilon_{m,p}:=\frac{p(m-p)(2\mu+1)+3m}{2p(m-p)(\mu-1)}
\end{equation}
with $\mu:=[(m+1)/2]$. As  explained above it is enough to prove that $\mathcal{H}^p_2(\Sigma,g_{\Sigma})=\{0\}$. Arguing as in \eqref{ext} we know that $\mathcal{B}_{p,\mathrm{ext}}+|A|^2\geq 0$. For the remaining term, $\mathcal{B}_{p,\mathrm{res}}+\mathrm{Ric}_g(\mathcal{N},\mathcal{N})$, we have
$$\mathcal{B}_{p,\mathrm{res}}+\mathrm{Ric}_g(\mathcal{N},\mathcal{N})\geq p(m-p)\left(\frac{a+b}{2}-\frac{b-a}{6}(4\mu-1)\right)+ma\geq 0$$ 
where the first inequality follows by applying first \cite[Prop. 3.8]{BoKa} and then \cite[Th. 1]{Savo} and the second one by the pinching condition $a\leq b\leq \varepsilon_{m,p}a$. We can thus conclude that  $$\mathcal{B}_p+\mathrm{Ric}_g(\mathcal{N},\mathcal{N})+|A|^2\geq 0$$ and so each $L^2$-harmonic $p$-form on $(\Sigma,g_{\Sigma})$ has constant length. Finally note that $|A|^2+\mathrm{Ric}_g(\mathcal{N},\mathcal{N})$ is a positive function on $\Sigma$, given that $(M,g)$ is positively pinched. Therefore by the fourth point of Cor. \ref{immediately} we can conclude that $\mathcal{H}^p_2(\Sigma,g_{\Sigma})=\{0\}$. Let us tackle now the third point. By arguing as in the proof of the second point it suffices to show that $\mathcal{B}_2+|A|^2+\mathrm{Ric}_g(\mathcal{N},\mathcal{N})\geq 0$. Let 
\begin{equation}
\label{sharp}
c_m:=\left\{
{ \begin{array}{lll} 
\frac{(11m-16)}{2(m-2)} &  \mathrm{if}\ m\ \mathrm{is\ even},\\
\\
\frac{(11m-18)}{2(m-3)} & \mathrm{if}\ m\ \mathrm{is\ odd}
\end{array}}
\right.\end{equation} By \eqref{ext} we know  that $\mathcal{B}_{2,\mathrm{ext}}+|A|^2\geq0$. In order to conclude we are left to prove that $$\mathcal{B}_{2,\mathrm{res}}+\mathrm{Ric}_g(\mathcal{N},\mathcal{N})\geq 0.$$ Let us consider an arbitrarily fixed $\omega\in \Omega^2(\Sigma)$. According to \cite[Lemma 2.2]{Zhu2} given any point $p\in \Sigma$ there
exists an open neighbourhood $U$ and an orthonormal trivialization $\{e_1,...,e_m\}$ of $T\Sigma|_U$ such that 
\begin{equation}
\label{localform}
\omega=\sum_{i=1}^{\ell}\alpha_i\theta^{2i-1}\wedge \theta^{2i}=\frac{1}{2}\sum_{i=1}^{\ell}\alpha_i(\theta^{2i-1}\wedge \theta^{2i}-\theta^{2i}\wedge \theta^{2i-1})
\end{equation}
where $2\ell\leq m$, $\{\theta_1,...,\theta_m\}$ is the dual basis of $\{e_1,...,e_m\}$ and $\alpha_i$ are smooth functions on $U$. Therefore by writing $$\omega=\sum_{r,s=1}^{\ell} \alpha_{r,s}\theta^{r}\wedge \theta^{s}$$ with $\alpha_{r,s}$ smooth functions on $U$, we get 
\begin{equation}\label{defalpha}\alpha_{r,s}=\left\{
\begin{array}{lll} 
\frac{\alpha_i}{2} & r=2i-1, s=2i \\
-\frac{\alpha_i}{2} & r=2i, s=2i-1\\
0 & \mathrm{otherwise}
\end{array}
\right.\end{equation}
Now, with a little abuse of notation, let us still denote with $g_{\Sigma}$ the metric on $\Lambda^2T^*\Sigma$ induced by $g_{\Sigma}$. By \cite[(2.2)]{Zhu}, see also \cite[\S 3]{Li}, we have $$g_{\Sigma}(\mathcal{B}_2\omega,\omega)=2\mathrm{Ric}_{i,j}\alpha_{i,l}\alpha_{j,l}+R_{i,j,k,l}\alpha_{l,i}\alpha_{k,j}$$ where in the above equality we used  Einstein convention about repeated indices. Therefore by \eqref{defalpha} we obtain
$$
\begin{aligned}
g_{\Sigma}(\mathcal{B}_2\omega,\omega)&=\sum_{i=1}^{\ell}(\mathrm{Ric}_{2i-1,2i-1}+ \mathrm{Ric}_{2i,2i})\alpha_i^2\\
&+ \frac{1}{2}\sum_{i,j=1}^{\ell}(R_{2i,2j,2j-1,2i-1}+R_{2i-1,2j-1,2j,2i}-R_{2i,2j-1,2j,2i-1}-R_{2i-1,2j,2j-1,2i})\alpha_i\alpha_j\\
&= \frac{1}{2}\sum_{i=1}^{\ell} (\mathrm{Ric}_{2i-1,2i-1}+\mathrm{Ric}_{2i,2i}- 2R_{2i,2i-1,2i,2i-1})\alpha_i^2\\
&+ \frac{1}{2}\sum_{i\neq j=}^{\ell}(R_{2i,2j,2j-1,2i-1}+R_{2i-1,2j-1,2j,2i}-R_{2i,2j-1,2j,2i-1}-R_{2i-1,2j,2j-1,2i})\alpha_i\alpha_j\\
&=\frac{1}{2}\sum_{i=1}^{\ell}(\mathrm{Ric}_{2i-1,2i-1}+\mathrm{Ric}_{2i,2i}- 2R_{2i,2i-1,2i,2i-1})\alpha_i^2-\sum_{i\neq j=1}^{\ell}R_{2i-1,2i,2j-1,2j}\alpha_i\alpha_j
\end{aligned}
$$
where in the last equality we used the Bianchi identity of $R_{g_\Sigma}$. By means of the Gauss-Codazzi equation and keeping in mind that $\mathcal{B}_{2,\mathrm{res}}$ depends only on the curvature tensor of $(M,g)$,  we can argue as in \cite[(2.6)]{Zhu} to obtain
$$
\begin{aligned}
&g_{\Sigma}(\mathcal{B}_{2,\mathrm{res}}\omega,\omega)+\mathrm{Ric}_g(\mathcal{N},\mathcal{N})g_{\Sigma}(\omega,\omega)=\\
&\frac{1}{2}\sum_{i=1}^{\ell}\left(\sum_{k=1}^m (\overline{R}_{2i-1,k,2i-1,k}+ \overline{R}_{2i,k,2i,k})-2\overline{R}_{2i,2i-1,2i,2i-1}+\mathrm{Ric}_g(\mathcal{N},\mathcal{N})\right)\alpha_i^2-\sum_{i\neq j=1}^{\ell}\overline{
R}_{2i-1,2i,2j-1,2j}\alpha_i\alpha_j=\\
&\frac{1}{2}\sum_{i=1}^{\ell}\left(\sum_{k\leq m,k\neq 2i-1,2i}(
\overline{R}_{2i-1,k,2i-1,k}+\overline{R}_{2i,k,2i,k})+ \mathrm{Ric}_g(\mathcal{N},\mathcal{N})\right)\alpha_i^2-\sum_{i\neq j=1}^{\ell}\overline{R}_{2i-1,2i,2j-1,2j}\alpha_i\alpha_j,
\end{aligned}
$$
where we denoted with $\overline{R}_{i,j,k,l}$ the components of the curvature tensor of $(M,g)$ and in the last equality we used the symmetries of $\overline{R}_{i,j,k,l}$  and some trivial cancellation of equal terms with opposite sign. Since we assumed that $0<a\leq \mathrm{sec}_g\leq b$ we can apply { Berger's inequality \cite[(7)]{Berger} to obtain the upper bound $|\overline{R}_{i,j,k,l}|\leq \frac{2}{3}(b-a)$. Together with }Young's inequality and the assumption on the pinching of   $\sec_g$, namely that $b\leq c_ma$ with $c_m$ defined in \eqref{sharp}, we get 
$$
\begin{aligned}
&g_{\Sigma}(\mathcal{B}_{2,\mathrm{res}}\omega,\omega)+\mathrm{Ric}_{g_N}(\mathcal{N},\mathcal{N})g_{\Sigma}(\omega,\omega)=\\
&\frac{1}{2}\sum_{i=1}^{\ell}\left(\sum_{k\leq m,k\neq 2i-1,2i}(a+a)+ma\right)\alpha_i^2 -\frac{1}{2}\sum_{i\neq j=1}^{\ell}|\overline{R}_{2i-1,2i,2j-1,2j}|(\alpha_i^2+\alpha_j^2)=\\
& \frac{(3m-4)a}{2}\sum_{i=1}^{\ell}\alpha_i^2-\frac{b-a}{3}\sum_{i\neq j=1}^{\ell}(\alpha_i^2+\alpha_j^2)=\left(\frac{(3m-4)a}{2}-\frac{{ 2(\ell-1)}(b-a)}{3}\right)\sum_{i=1}^{\ell}\alpha_i^2\geq 0,
\end{aligned}
$$
given that $${ \frac{2 (\ell-1)}{3}=} \left\{
\begin{array}{lll} 
\frac{m-2}{3} &  \mathrm{if}\ m\  \mathrm{is\ even},\\
\frac{m-3}{3} & \mathrm{if}\ m\ \mathrm{is\ odd}
\end{array}
\right.$$
 and thus $$\frac{(3m-4)a}{2}-\frac{{2(\ell-1)}(b-a)}{3}= \left\{
\begin{array}{lll} 
\frac{(11m-16)a}{6}-\frac{(m-2)b}{3} &  \mathrm{if}\ m\ \mathrm{is\ even},\\
\frac{(11m-18)a}{6}-\frac{(m-2)b}{3} & \mathrm{if}\ m\ \mathrm{is\ odd}
\end{array}
\right.$$ We can thus conclude that also in this case $$\mathcal{B}_2+\mathrm{Ric}_g(\mathcal{N},\mathcal{N})+|A|^2\geq 0$$ and thus $\mathcal{H}^2_{2}(\Sigma,g_{\Sigma})=\{0\}$, as required.
\end{proof}

We have now various comments and corollaries.

\begin{rem}
In Th. \ref{noL2forms}, as well as in the subsequent results, the vanishing of $\mathcal{H}^p_2(\Sigma,g_{\Sigma})$ does not require $\Sigma$ to be oriented. The orientability of $\Sigma$ is only needed to deduce the vanishing of $\mathcal{H}^{m-p}_2(\Sigma,g_{\Sigma})$ from the vanishing of $\mathcal{H}^p_2(\Sigma,g_{\Sigma})$ through the $L^2$-Poincar\'e inequality.
\end{rem}

\begin{rem}
The definition of the curvature operator adopted in \cite[p. 74]{BoKa} produces the double of the usual curvature operator, see e.g. \cite[p. 36]{Peter} . This is why in the proof of the second point of  Th. \ref{noL2forms} appears $(\frac{a+b}{2}-\frac{b-a}{6}(4\mu-1))$ rather than $(a+b-\frac{b-a}{3}(4\mu-1))$, see \cite[Prop. 3.8]{BoKa}.
\end{rem}

\begin{rem}
 The third point of the above theorem improves \cite[Th. 1.1]{Zhu} as we used a weaker pinching condition. In particular ${ {c_m}^{-1}}< \frac{2}{11}< \frac{1}{4}$ for any value of $m$, hence a complete simply connected Riemannian manifold $(M,g)$ satisfying the pinching of the third point  of Th. \ref{noL2forms} is not necessarily diffeomorphic to a sphere, contrary to what happens in \cite{Zhu} where the pinching constant is $\frac{5}{17}$. Note also that the pinching condition used in the third point of Th. \ref{noL2forms} is weaker than the one used in second point of the same theorem for $p=2$. Moreover the pinching and the condition $\mathcal R_g\geq 0$ are independent, see Remark \ref{rmkcond} below.
\end{rem}

\begin{rem}
\label{L21}
{ Here we collect some remarks about the condition $|A|^2-K_{\alpha}^2\geq 0$.}
\begin{itemize}
\item[i)]
When $p=1$ it is immediate to verify that the condition $|A|^2-K_{\alpha}^2\geq 0$ is always satisfied. Therefore to have $\mathcal{B}_1+|A|^2+\mathrm{Ric}_g(\mathcal{N},\mathcal{N})\geq 0$ it suffices to require $\mathcal{B}_{1,\mathrm{res}}+\mathrm{Ric}_g(\mathcal{N},\mathcal{N})\geq 0$. Note that the latter inequality is an equivalent reformulation of the condition used in \cite[Th. 4.1]{Tanno}, since for each $x\in f(\Sigma)$ and $v\in T_{f(x)}M$ we have $\mathcal{B}_{1,\mathrm{res}}v+\mathrm{Ric}_g(\mathcal{N}_x,\mathcal{N}_x)=\mathrm{biRic}_g(v,\mathcal{N}_x)$. We recall that $\mathrm{biRic}_g(v,\mathcal{N}_x)$ denotes the bi-Ricci curvature along $v$ and $\mathcal{N}_x$, that is $\mathrm{biRic}_g(v,\mathcal{N}_x)=\mathrm{Ric}_g(v,v)+\mathrm{Ric}_g(\mathcal{N}_x,\mathcal{N}_x)-\mathrm{sec}_g(v,\mathcal{N}_x)$, and that if $\mathrm{sec}_g\geq 0$ then $\mathrm{biRic}_g\geq 0$.
{ \item[ii)] The above condition is always satisfied for any $p$  if $m\leq 4$, and it is in general false for $m\geq 5$ and $p\in\{2,\dots m-2\}$, see \cite[Lemma 1]{Tanno} and the subsequent proposition.
\item[iii)] Given $m$ and $p\in\{2,\dots,m-2\}$, the conditions $H=0$ and $|A|^2-K_{\alpha}^2\geq 0$ for any multi-index $\alpha$ with $|\alpha|=p$ do not necessarily implie that the hypersurface is totally geodesic:  for example $(-k,0,\dots,0,k)\in\mathbb R^m$ satisfies this condition for any $k\in\mathbb R$.
\item[iv)] The example of the previous point shows also that the above condition does not necessarily implies that $|A|^2$ is bounded. However, if $|A|^2$ satisfies a suitable bounds, the hypothesis $|A|^2-K_{\alpha}^2\geq 0$ can be dropped, see Theorem \ref{Abound} below.
}
\end{itemize}
\end{rem}

The next proposition exhibits some sufficient conditions to have $|A|^2-K^2_{\alpha}\geq 0$. More precisely:
\begin{prop}\label{zeropp}
Let $(M,g)$ be a Riemannian manifold of dimension $m+1$ and let $\Sigma$ be an oriented manifold of dimension $m\geq 4$ with a two-sided, stable, minimal immersion $f:\Sigma\rightarrow (M,g)$. We have the following properties:
\begin{enumerate}
\item If for every point $x\in \Sigma$ there are at most four non-zero principal curvatures, then $|A|^2-K_{\alpha}^2\geq0$ for each $\alpha\subset \{1,...,m\}$.
\item { if for every point $x\in \Sigma$ there are exactly two non-zero principal curvature, $\pm k$, both with the same multiplicity $2<l\leq m/2$, then $|A|^2-K_{\alpha}^2\geq 0$ for each $\alpha\subset \{1,...,m\}$ with $|\alpha|\leq \sqrt{2l}$.}
\end{enumerate}
\end{prop}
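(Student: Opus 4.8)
The plan is to reduce the whole statement to elementary inequalities among the principal curvatures, using in an essential way the minimality relation $\sum_{i=1}^m k_i=0$. The single structural fact I would isolate first is a \emph{duality}: since $f$ is minimal, $K_\alpha+K_{\star\alpha}=\sum_i k_i=0$ for every multi-index $\alpha$, so $K_\alpha^2=K_{\star\alpha}^2$; because $\alpha$ and $\star\alpha$ split the nonzero principal directions between them, this lets me replace $\alpha$ by whichever of $\alpha,\star\alpha$ meets the set of nonzero principal directions in fewer indices, and of course $|A|^2-K_\alpha^2\ge 0$ holds for one iff it holds for the other.

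For the first point, fix $x\in\Sigma$ and let $k_{i_1},\dots,k_{i_r}$ with $r\le 4$ be the nonzero principal curvatures at $x$; every $K_\alpha$ depends only on $\alpha\cap\{i_1,\dots,i_r\}$, and by the duality above I may assume this intersection has at most $2$ elements (if it has $3$ or $4$, then $\star\alpha$ meets $\{i_1,\dots,i_r\}$ in at most $1$ index). If it has $0$ or $1$ element then $K_\alpha^2$ is $0$ or equals a single $k_j^2\le\sum_i k_i^2=|A|^2$, and we are done. If it has exactly two, say along the directions $k_i,k_j$, let $k_a,k_b$ denote the remaining nonzero principal curvatures (set to $0$ the missing ones when $r<4$); minimality gives $k_i+k_j=-(k_a+k_b)$, hence $(k_i+k_j)^2=(k_a+k_b)^2$, and substituting this into $|A|^2-K_\alpha^2=k_i^2+k_j^2+k_a^2+k_b^2-(k_i+k_j)^2$ yields
$$|A|^2-K_\alpha^2=\frac{1}{2}(k_i-k_j)^2+\frac{1}{2}(k_a-k_b)^2\ge 0.$$
This little symmetrization of $(k_i+k_j)^2$ — which is exactly where $H=0$ enters — is the only step that I expect to require any thought.

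For the second point the principal curvatures at each point are $k$ with multiplicity $l$, $-k$ with multiplicity $l$, and $0$ with multiplicity $m-2l$, so $|A|^2=2lk^2$. Given $\alpha$ with $|\alpha|=p$, let $a$ (resp. $b$) be the number of indices of $\alpha$ along which the principal curvature equals $+k$ (resp. $-k$); then $a+b\le p$ and $K_\alpha=(a-b)k$, so $|K_\alpha|\le(a+b)|k|\le p|k|$ and therefore $K_\alpha^2\le p^2k^2\le 2lk^2=|A|^2$ as soon as $p\le\sqrt{2l}$, which is precisely the hypothesis. Here the assumption $l>2$ is what makes the conclusion genuinely stronger than the first point (for $l\le 2$ there are at most four nonzero principal curvatures, already covered above), and $l\le m/2$ is forced since $\pm k$ each occur with multiplicity $l$; no real obstacle is expected, the content being just that $K_\alpha$ is $k$ times a difference of two nonnegative integers whose sum is at most $p$.
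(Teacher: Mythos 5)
Your proposal is correct and takes essentially the same route as the paper: for point 2 your estimate $K_\alpha^2\le |\alpha|^2k^2\le 2lk^2=|A|^2$ is exactly the paper's one-line inequality $|A|^2-K_\alpha^2\ge(2l-|\alpha|^2)k^2\ge 0$. For point 1 the paper merely reduces to \cite[Lemma 1]{Tanno}, and your argument --- the duality $K_\alpha^2=K_{\star\alpha}^2$ forced by $H=0$ to bring the intersection with the nonzero principal directions down to at most two indices, followed by the identity $|A|^2-K_\alpha^2=\tfrac12(k_i-k_j)^2+\tfrac12(k_a-k_b)^2$ --- is a correct, self-contained rendering of precisely the content of that lemma.
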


\begin{proof}{ The first point can be easily reduced to \cite[Lemma1]{Tanno}.
In the second one we have
$$
|A|^2-K_{\alpha}^2\geq(2l-|\alpha|^2)k^2\geq 0
$$
}
since we assumed  $|\alpha|\leq \sqrt{2l}$.
\end{proof}

\begin{cor}
Let $(M,g)$ be a Riemannian manifold of dimension $m+1$ and let $\Sigma$ be an oriented manifold of dimension $m\geq 4$ with a two sided, stable, minimal immersion $f:\Sigma\rightarrow (M,g)$ such that $(\Sigma,g_{\Sigma})$ is complete.
Assume that $\mathcal{R}_g\geq0$ (or more generally there exists $\gamma\in \mathbb{R}$ such that $\mathcal{R}_g\geq \gamma$ and $p(m-p)\gamma+\mathrm{Ric}_g(\mathcal{N},\mathcal{N})\geq 0$),  $|A|^2-K^2_{\alpha}\geq 0$ for any $\alpha\subset \{1,...,m\}$ with $|\alpha|=p$ and $\mathcal{H}^p_2(\Sigma,g_{\Sigma})\neq \{0\}$ { for some $p\in\{2,\dots m-2\}$}. Then $\Sigma$ is compact, totally geodesic and $\mathrm{Ric}_g(\mathcal{N},\mathcal{N})$ vanishes.
\end{cor}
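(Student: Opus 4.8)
The plan is to extract the first two conclusions directly from Theorem~\ref{noL2forms} and its proof, and then deduce compactness from a volume comparison. First I would use the $L^2$-Poincar\'e duality $\mathcal{H}^p_2(\Sigma,g_\Sigma)\cong\mathcal{H}^{m-p}_2(\Sigma,g_\Sigma)$ together with the identity $K_{\star\alpha}=-K_\alpha$ (valid since $f$ is minimal, so that the hypothesis $|A|^2-K^2_\alpha\ge 0$ for all $|\alpha|=p$ is equivalent to the same hypothesis for all $|\alpha|=m-p$) in order to reduce to the case $2\le p\le m/2$. Then all hypotheses of Theorem~\ref{noL2forms}(1) are in force, and since $\mathcal{H}^p_2(\Sigma,g_\Sigma)\neq\{0\}$, the last sentence of that statement, read in contrapositive form, forces $f:\Sigma\to(M,g)$ to be totally geodesic and $\mathrm{Ric}_g(\mathcal{N},\mathcal{N})\le 0$ everywhere on $f(\Sigma)$.

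Next I would fix the sign of $\mathrm{Ric}_g(\mathcal{N},\mathcal{N})$. As observed in the proof of Theorem~\ref{noL2forms}(1), the assumption $p(m-p)\gamma+\mathrm{Ric}_g(\mathcal{N},\mathcal{N})\ge 0$ already forces $\mathrm{Ric}_g(\mathcal{N},\mathcal{N})\ge 0$ on $f(\Sigma)$, irrespective of the sign of $\gamma$; combined with the previous step this yields $\mathrm{Ric}_g(\mathcal{N},\mathcal{N})\equiv 0$ on $f(\Sigma)$, which is the third conclusion. Plugging $\mathrm{Ric}_g(\mathcal{N},\mathcal{N})=0$ back into $p(m-p)\gamma+\mathrm{Ric}_g(\mathcal{N},\mathcal{N})\ge 0$ gives $\gamma\ge 0$, hence $\mathcal{R}_g\ge 0$; in particular we are reduced to the case of a non-negative curvature operator (the stated case $\mathcal{R}_g\ge 0$ being the instance $\gamma=0$).

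For compactness I would first note that, by Theorem~\ref{noL2forms}(1), any non-trivial $\omega\in\mathcal{H}^p_2(\Sigma,g_\Sigma)$ has constant length equal to a positive constant $c$, whence $\vol_{g_\Sigma}(\Sigma)=c^{-2}\|\omega\|^2_{L^2(\Sigma,g_\Sigma)}<+\infty$. Since finite volume alone does not give compactness, I would then exploit the intrinsic geometry: as $A\equiv 0$, the Gauss equation gives $\mathrm{Ric}_{g_\Sigma}(X,X)=\mathrm{Ric}_g(X,X)-\mathrm{sec}_g(X,\mathcal{N})$ for $X\in T\Sigma$; since $\mathcal{R}_g\ge 0$ implies $\mathrm{sec}_g\ge 0$, the vanishing $0=\mathrm{Ric}_g(\mathcal{N},\mathcal{N})=\sum_i\mathrm{sec}_g(e_i,\mathcal{N})$ over an orthonormal basis $\{e_i\}$ of $T_{f(x)}f(\Sigma)$ forces every $\mathrm{sec}_g(e_i,\mathcal{N})=0$, hence $\mathrm{sec}_g(X,\mathcal{N})=0$ for all $X\in T\Sigma$, and therefore $\mathrm{Ric}_{g_\Sigma}(X,X)=\mathrm{Ric}_g(X,X)\ge 0$. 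Thus $(\Sigma,g_\Sigma)$ is complete, has non-negative Ricci curvature and finite volume, so by the Calabi--Yau volume growth theorem (a complete non-compact Riemannian manifold with $\mathrm{Ric}\ge 0$ has infinite volume) it must be compact.

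The duality reduction and the Gauss-equation computation are routine; the only ingredient genuinely external to Theorem~\ref{noL2forms} is the volume-growth theorem, so the main point to get right is combining the finite-volume conclusion with non-negative intrinsic Ricci curvature, i.e.\ checking that the ambient curvature hypotheses together with total geodesy really do force $\mathrm{Ric}_{g_\Sigma}\ge 0$.
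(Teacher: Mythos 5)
Your proof is correct and follows essentially the same route as the paper's: apply Theorem \ref{noL2forms}(1) in contrapositive to get total geodesy and $\mathrm{Ric}_g(\mathcal{N},\mathcal{N})\le 0$, combine with the observation that the hypothesis forces $\mathrm{Ric}_g(\mathcal{N},\mathcal{N})\ge 0$, deduce $\mathrm{Ric}_{g_\Sigma}\ge 0$ from the Gauss equation, and conclude compactness by playing the finite volume coming from a constant-length non-trivial $L^2$-form against the Calabi--Yau infinite-volume theorem (the paper's \cite[Th.\ 7]{Yau}). The only (welcome) extra care you take is the explicit Poincar\'e-duality reduction from $p\in\{2,\dots,m-2\}$ to $2\le p\le m/2$, which the paper leaves implicit.
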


\begin{proof}
Clearly the first point of Th. \ref{noL2forms} and Cor. \ref{immediately} imply that $f:\Sigma\rightarrow (M,g)$ is  totally geodesic and $\mathrm{Ric}_g(\mathcal{N},\mathcal{N})$ vanishes. Let us show that $\Sigma$ is necessarily compact. The Gauss-Codazzi equation and the assumptions on $(M,g)$ tell us that $\mathrm{Ric}_{g_{\Sigma}}\geq0$ and therefore if $\Sigma$ is not compact it has necessarily infinite volume thanks to \cite[Th. 7]{Yau}. Since every $L^2$-harmonic $p$-form has constant length this would implies that $\mathcal{H}^p_2(\Sigma,g_{\Sigma})=\{0\}$, which contradicts the assumption
$\mathcal{H}^p_2(\Sigma,g_{\Sigma})\neq\{0\}$. We can thus conclude that $\Sigma$ is compact.
\end{proof}

\begin{teo}\label{Abound}
Let $(M,g)$ be a Riemannian manifold of dimension $m+1$ and let $\Sigma$ be an oriented manifold of dimension $m\geq 4$ with a two-sided, stable, minimal immersion $f:\Sigma\rightarrow (M,g)$ such that $(\Sigma,g_{\Sigma})$ is complete. If $\mathcal{R}_g\geq \gamma\geq 0$, $\mathrm{Ric}_g\geq b>0$ and $|A|^2\leq\frac{\gamma p(m-p)+b}{\min\{p,m-p\}-1}$ with $p\in\{2,...,m-2\}$ then $$\mathcal{H}_2^p(\Sigma,g_{\Sigma})=\mathcal{H}_2^{m-p}(\Sigma,g_{\Sigma})=\{0\}.$$
\end{teo}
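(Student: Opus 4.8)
The plan is to follow the proof of Theorem~\ref{noL2forms} almost verbatim, replacing the pointwise hypothesis $|A|^2-K_\alpha^2\geq 0$ there by the present uniform bound on $|A|^2$. By $L^2$-Poincar\'e duality it is enough to prove $\mathcal{H}^p_2(\Sigma,g_\Sigma)=\{0\}$; moreover, since both $\gamma p(m-p)+b$ and $\min\{p,m-p\}$ are invariant under $p\mapsto m-p$, we may assume without loss of generality that $p\leq m/2$. Then $\min\{p,m-p\}=p$, so $p-1\geq 1$ and the hypothesis reads $(p-1)\,|A|^2\leq\gamma p(m-p)+b$.

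Next, recall the Weitzenb\"ock formula $\Delta_p=\nabla^t\circ\nabla+\mathcal{B}_p$ with $\mathcal{B}_p=\mathcal{B}_{p,\mathrm{res}}+\mathcal{B}_{p,\mathrm{ext}}$, and, exactly as in the proof of Theorem~\ref{noL2forms}, verify the two hypotheses of Theorem~\ref{c-length} for the Gauss--Bonnet operator $d+d^t$ with $q:=\mathrm{Ric}_g(\mathcal{N},\mathcal{N})+|A|^2$. The inequality $\langle(\Delta-q)\phi,\phi\rangle_{L^2(\Sigma,g_\Sigma)}\geq 0$ for all $\phi\in C_c^\infty(\Sigma)$ is precisely the stability of $f$. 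For the curvature condition $\mathcal{B}_p+q\geq 0$ I would combine three estimates: $\mathcal{B}_{p,\mathrm{res}}\geq p(m-p)\gamma$ (from $\mathcal{R}_g\geq\gamma$, by \cite[Th.~1]{Savo}); $\mathrm{Ric}_g(\mathcal{N},\mathcal{N})\geq b$ (from $\mathrm{Ric}_g\geq b$, as $\mathcal{N}$ is a unit normal); and, using \eqref{pcc} together with minimality ($K_{\star\alpha}=-K_\alpha$) and Cauchy--Schwarz,
$$\mathcal{B}_{p,\mathrm{ext}}\;\geq\;\min_{|\alpha|=p}K_\alpha K_{\star\alpha}\;=\;-\max_{|\alpha|=p}K_\alpha^2\;\geq\;-p\,|A|^2 ,$$
since $K_\alpha^2=\bigl(\sum_{i\in\alpha}k_i\bigr)^2\leq p\sum_{i\in\alpha}k_i^2\leq p\,|A|^2$ whenever $|\alpha|=p$. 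Adding the three estimates yields
$$\mathcal{B}_p+q\;\geq\;p(m-p)\gamma+b-p\,|A|^2+|A|^2\;=\;\gamma p(m-p)+b-(p-1)\,|A|^2\;\geq\;0$$
by the assumed bound on $|A|^2$.

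Theorem~\ref{c-length} now gives that every $L^2$-harmonic $p$-form on $(\Sigma,g_\Sigma)$ has constant length. To conclude vanishing, observe that $b>0$ forces $q=\mathrm{Ric}_g(\mathcal{N},\mathcal{N})+|A|^2\geq b>0$ everywhere on $\Sigma$, so the fourth point of Corollary~\ref{immediately} gives $\vol_{g_\Sigma}(\Sigma)=+\infty$ and hence $\mathcal{H}^p_2(\Sigma,g_\Sigma)=\{0\}$. By the duality reduction, also $\mathcal{H}^{m-p}_2(\Sigma,g_\Sigma)=\{0\}$.

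I do not expect a real obstacle here: Theorem~\ref{c-length}, the stability inequality, and Savo's splitting of $\mathcal{B}_p$ do all the heavy lifting, and the only genuinely new input is the elementary estimate $K_\alpha^2\leq p\,|A|^2$ fed into \eqref{pcc}. The one point needing a little care is the bookkeeping with $\min\{p,m-p\}$: reducing to $p\leq m/2$ is what makes the relevant denominator $p-1$ rather than the larger $m-p-1$, and this is exactly what forces the sign condition $\mathcal{B}_p+q\geq 0$ under the stated bound on $|A|^2$.
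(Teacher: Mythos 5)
Your proposal is correct and follows essentially the same route as the paper: the Cauchy--Schwarz bound $K_\alpha^2\leq \min\{p,m-p\}\,|A|^2$ (which you obtain via the harmless WLOG reduction to $p\leq m/2$, where the paper instead keeps $\min\{p,m-p\}$ throughout using $K_\alpha^2=K_{\star\alpha}^2$), fed into Savo's estimate for $\mathcal{B}_{p,\mathrm{ext}}$, combined with $\mathcal{B}_{p,\mathrm{res}}\geq p(m-p)\gamma$ and $\mathrm{Ric}_g(\mathcal{N},\mathcal{N})\geq b$, and concluded via Theorem \ref{c-length} and the fourth point of Corollary \ref{immediately}. No gaps.
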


\begin{proof}
Let  $\alpha\subset \{1,...,m\}$ with $|\alpha|=p$. Since  $\Sigma$ is minimal, with the notation introduced in \eqref{pcc} we have $K_{\alpha}^2=K_{\star\alpha}^2$, hence $$K_{\alpha}^2= \leq \min\{p,m-p\}|A|^2\leq |A|^2+\gamma p(m-p)+b\leq \mathcal{B}_{\mathrm{res},p}+|A|^2+\mathrm{Ric}_g(\mathcal{N},\mathcal{N}).$$ 
Therefore $$\mathcal{B}_{\mathrm{res},p}+|A|^2+\mathrm{Ric}_g(\mathcal{N},\mathcal{N})+\min_{\alpha\subseteq \{1,...,m\},\ |\alpha|=k} -K^2_{\alpha}\geq 0$$ and the above inequality amounts to saying that $$\mathcal{B}_{p,\mathrm{res}}+\mathcal{B}_{\mathrm{ext},p}+\mathrm{Ric}_g(\mathcal{N},\mathcal{N})+|A|^2\geq 0.$$ We can thus conclude by the first point of Th. \ref{noL2forms} and the fourth point of Cor. \ref{immediately} that $\mathcal{H}^p_2(\Sigma,g_{\Sigma})=\mathcal{H}^{m-p}_2(\Sigma,g_{\Sigma})=\{0\}.$
\end{proof}

It is interesting to investigate the consequences of the previous result in the case of the round sphere $\mathbb S^{m+1}$. In this setting we have $\gamma=1$ and $b=m$. Given $2\leq p\leq \frac m2$,  the bound of the previous theorem becomes
$$
|A|^2\leq\beta(p,m):=\frac{p(m-p)+m}{p-1}.
$$
Note that $\beta$ is decreasing in $p$. Hence we have a stronger, but uniform in $p$, bound by taking $p=\left[\frac m2\right]$. After some standard algebraic manipulation we can see that
\begin{equation}\label{mari}
\beta\left(\left[\frac m2\right],m\right)>m\quad\text{if and only if}\ m\leq 7\ \text{or}\ m=9.
\end{equation}
Comparing with the non existence result of \cite[Corollary 1.3]{CMR} and the classification of \cite[Th. 1.1]{Mari}, we have that  Th. \ref{Abound} yields a new and complete vanishing in $\mathbb S^{m+1}$ for $m\in\{6,7,9\}$. More precisely:
\begin{cor}
Let $f:\Sigma\rightarrow \mathbb S^{m+1}$ be a complete, oriented, stable, minimally immersed hypersurface, with $m\in\{6,7,9\}$. If the bound
$$
|A|^2\leq\frac{\left[\frac m2\right](m-\left[\frac m2\right])+m}{\left[\frac m2\right]-1}=\left\{\begin{array}{lll}
 15/2&\text{if}&n=6,\\
19/2&\text{if}&n=7,\\
29/3 &\text{if}&n=9
\end{array}\right.,
$$
holds true, then for any $p\in\{0,...,m\}$ we have  $$\mathcal{H}_2^p(\Sigma,g_{\Sigma})=0.$$
\end{cor}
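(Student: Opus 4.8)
The plan is to derive the statement from Theorem \ref{Abound} applied degree by degree. By $L^2$-Poincar\'e duality $\mathcal{H}^p_2(\Sigma,g_\Sigma)\cong\mathcal{H}^{m-p}_2(\Sigma,g_\Sigma)$, it suffices to treat $0\le p\le[m/2]$; within this range the extreme degrees $p=0$ and $p=1$ lie outside the range $2\le p\le m-2$ of Theorem \ref{Abound} and must be handled separately. First I would record the curvature data of the round sphere $\mathbb{S}^{m+1}$ that feed the hypotheses: $\mathcal{R}_g=\mathrm{Id}$ (so $\mathcal{R}_g\ge\gamma$ with $\gamma=1$) and $\mathrm{Ric}_g=m\,g$, whence the function $q:=|A|^2+\mathrm{Ric}_g(\mathcal{N},\mathcal{N})=|A|^2+m$ attached to the stability operator satisfies $q\ge m>0$ everywhere on $\Sigma$.

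For $p=0$: stability gives $\Delta-q\ge0$ on $C^\infty_c(\Sigma)$, and since $q\ge0$ is strictly positive somewhere, the scalar argument used in the fourth point of Cor. \ref{immediately} (which uses only completeness of $(\Sigma,g_\Sigma)$, $q\ge0$, positivity of $q$ at a point, and $\Delta-q\ge0$) forces $\vol_{g_\Sigma}(\Sigma)=+\infty$; hence $\mathcal{H}^0_2(\Sigma,g_\Sigma)=\{0\}$, and by duality $\mathcal{H}^m_2(\Sigma,g_\Sigma)=\{0\}$. For $p=1$: the condition $|A|^2-K_\alpha^2\ge0$ is automatic when $|\alpha|=1$, and $\mathrm{sec}_g\equiv1>0$ gives $\mathrm{biRic}_g>0$, so $\mathcal{H}^1_2(\Sigma,g_\Sigma)=\{0\}$ by \cite[Th. 4.1]{Tanno}; equivalently, the proof of the first point of Th. \ref{noL2forms} applies verbatim with $p=1$ and $\gamma=1$ since $(m-1)\gamma+\mathrm{Ric}_g(\mathcal{N},\mathcal{N})=2m-1>0$.

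The main case is $2\le p\le[m/2]$, where I would apply Theorem \ref{Abound} with $\gamma=1$ and $b=m$: its hypothesis reads $|A|^2\le\beta(p,m):=\frac{\gamma p(m-p)+b}{\min\{p,m-p\}-1}=\frac{p(m-p)+m}{p-1}$ (using $p\le m/2$), and its conclusion is exactly $\mathcal{H}^p_2(\Sigma,g_\Sigma)=\mathcal{H}^{m-p}_2(\Sigma,g_\Sigma)=\{0\}$. The point is that the single bound in the statement covers all these $p$ at once: treating $\beta(\cdot,m)$ as a function of a real variable one computes $\partial_p\beta(p,m)=-\frac{(p-1)^2+2m-1}{(p-1)^2}<0$, so $\beta(\cdot,m)$ is strictly decreasing on $[2,m/2]$ and $\beta([m/2],m)\le\beta(p,m)$ for every $2\le p\le[m/2]$. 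Since $\beta([m/2],m)$ equals $15/2$, $19/2$, $29/3$ for $m=6,7,9$ respectively, the assumed bound $|A|^2\le\beta([m/2],m)$ implies $|A|^2\le\beta(p,m)$ for each such $p$, so Theorem \ref{Abound} applies in every degree $2\le p\le[m/2]$.

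Collecting the three cases gives $\mathcal{H}^p_2(\Sigma,g_\Sigma)=\{0\}$ for $p\in\{0,1,\dots,[m/2]\}$, and $L^2$-Poincar\'e duality supplies the remaining degrees. I expect the only genuine obstacle to be the elementary but essential bookkeeping step, namely turning the uniform hypothesis $|A|^2\le\beta([m/2],m)$ into the degree-dependent bound required by Theorem \ref{Abound} for each $2\le p\le m/2$ via the monotonicity of $\beta$ in $p$; everything else is a direct assembly of Theorem \ref{Abound}, the fourth point of Cor. \ref{immediately}, Tanno's degree-one vanishing, and Hodge duality.
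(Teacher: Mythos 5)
Your proposal is correct and follows essentially the same route as the paper: the corollary is obtained by applying Theorem \ref{Abound} with $\gamma=1$, $b=m$ on the round sphere and using the monotonicity of $\beta(\cdot,m)$ in $p$ to reduce all degrees $2\le p\le[m/2]$ to the single uniform bound $\beta([m/2],m)$, with the remaining degrees supplied by duality and by the facts (recorded earlier in the paper) that $\mathcal{H}^0_2$ vanishes by infinite volume and $\mathcal{H}^1_2$ vanishes by Tanno's bi-Ricci criterion. Your treatment of the edge cases $p=0,1$ is just a more explicit write-up of what the paper leaves implicit.
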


In addition to the above result we point out that for a fixed $p\in\{2,....,m/2\}$ the inequality $|A|\leq \beta(p,m)$ is generally weaker than the one required in \cite[Th. 1.1]{Mari}, namely $|A|\leq m$. Indeed for a fixed $p\in\{2,....,m/2\}$ we have $\beta(p,m)>m$ iff $m>p^2/2$. Therefore whenever the latter inequality holds true the vanishing of $\mathcal{H}^p_2(\Sigma,g_{\Sigma})$ is new and does not follows from the classification given in \cite{Mari}. For instance  we have  $\beta(2,m)>m$ for any $m> 2$ and $\beta(3,m)>m$ for any $m>4$.

\section{Examples}
In this section we apply our previous results to various explicit examples.

\subsection{Euclidean space} Theorem \ref{noL2forms} clearly applies when the ambient manifold is the Euclidean space. In view of both classical and recent developments on the Bernstein conjecture - some of them still under review at the time of writing - the result is trivial for $m=4$ and $m=5$, since in these cases it is proved that $\Sigma$ has to be a hyperplane, see \cite{CL1, CMR, CL2, CL3, Ma}.

\noindent On the other hand a celebrated result of Bombieri, De Giorgi and Giusti \cite{BDGG} showed that for $m\geq 8$ there exists minimal entire graphs (hence stable) in $\mathbb R^{m+1}$ that are not hyperplanes. Let us consider the case of $m=2n$ and let $\Sigma$ be the minimal graph described in \cite{BDGG}: it is a graph of the type $x_{m+1}=F(u,v)$, where $u=\sqrt{x_1^2+\cdots+x_n^2}$ and $v=\sqrt{x_{n+1}^2+\cdots+x_{m}^2}$. Because of the symmetries and minimality, $\Sigma$ has two principal curvatures: $\pm k$, both with multiplicity $n$. By Proposition \ref{zeropp}, point 1 and Theorem \ref{noL2forms}, point 1 we have that for any $p\leq\sqrt m$ 

$$\mathcal{H}^p_2(\Sigma,g_{\Sigma})=\mathcal{H}^{m-p}_2(\Sigma,g_{\Sigma})=\{0\}.$$

\subsection{Spaces with nonnegative curvature operator}
Famous examples of Riemannian manifolds with nonnegative curvature operator are $\mathbb R^m$, $\mathbb S^m$ and $\mathbb{C}\mathbb{P}^m$, each one endowed with its standard metric. In \cite[Section 6.5]{BoKa} it is showed that $\mathbb S^{2n+1}$ with a Berger metric with parameter $\vep$ has non-negative curvature operator if and only if $0<\vep<\frac{2n+2}{2n+1}$ (see Lemma \ref{sph-berger} below). Moreover the Riemannian product of any two (or more) Riemannian manifolds with non-negative curvature operator produces a new Riemannian manifold with non-negative curvature operator.  We are interested in considering ambient manifolds with non-negative curvature operator and of dimension $5$ because in this case the condition $|A|^2-K_{\alpha}^2\geq 0$ is automatically satisfied, see \cite[Lemma1]{Tanno}.   
However  if the dimension is less or equal than $6$ and the ambient manifold has non-negative sectional curvature and uniformly positive Ricci curvature, then there is no complete, orientable, immersed, stable minimal hypersurface, see \cite[Corollary 1.3]{CMR}. 

Examples of dimension $5$ of particular interest are: $\mathbb S^2\times\mathbb R^3$, $\mathbb S^3\times\mathbb R^2$, $\mathbb S^4\times\mathbb R$, $\mathbb S^5$, $\mathbb S^2\times\mathbb S^3$, $\mathbb S^2\times\mathbb S^2\times\mathbb R$, $\mathbb {CP}^2\times\mathbb R$. 

Examples of dimension $6$ of particular interest are: $\mathbb S^2\times\mathbb R^4$, $\mathbb S^3\times\mathbb R^3$, $\mathbb S^4\times\mathbb R^2$, $\mathbb S^5\times\mathbb R$, $\mathbb S^6$, $\mathbb S^2\times\mathbb S^4$, $\mathbb S^3\times\mathbb S^3$, $\mathbb S^2\times\mathbb S^2\times\mathbb S^2$, $\mathbb S^2\times\mathbb S^3\times\mathbb R$, $\mathbb S^2\times\mathbb S^2\times\mathbb R^2$ and $\mathbb {CP}^2\times\mathbb R^2$.

Note that in the above list, when more spherical factors appears, they can have different radii. Moreover the factors $\mathbb S^3$ (resp. $\mathbb S^5$) could be endowed with a Berger metric $g_{\vep}$ for any $\vep<\frac 43$ (resp. $\vep<\frac 65$). 

Because of the non-negativity of the curvature operator, Theorems \ref{noL2forms} applies to all such ambient manifolds. However, by \cite[Theorem 1.2]{CMR}, the problem of looking for an orientable stable minimal hypersurface becomes trivial in some of them. Let us give a detailed picture:

\begin{enumerate}
\item  $\mathbb S^5$, $\mathbb S^2\times\mathbb S^3$, $\mathbb S^6$, $\mathbb S^2\times\mathbb S^4$, $\mathbb S^3\times\mathbb S^3$ and $\mathbb S^2\times\mathbb S^2\times\mathbb S^2$ have non-negative sectional curvature, but uniformly positive Ricci curvature, therefore there is no complete, orientable, immersed, stable minimal hypersurface, by \cite[Corollary 1.3]{CMR}.

\item $\mathbb S^4\times\mathbb R$, $\mathbb {CP}^2\times\mathbb R$, $\mathbb S^2\times\mathbb S^2\times\mathbb R$, $\mathbb S^5\times\mathbb R$ and $\mathbb S^2\times\mathbb S^3\times \mathbb R$ have non-negative sectional curvature, but uniformly positive bi-Ricci curvature.  Therefore every complete, orientable, immersed, minimal stable hypersurface in any of these spaces is an horizontal slice as we can prove with more generality in the following result.

\begin{prop}
Let $(N,g_N)$ be a compact Riemannian manifolds with dimension $n\leq 5$, nonnegative sectional curvature and uniformly positive Ricci and bi-Ricci curvature. Let $M=N\times\mathbb R$ endowed with the product metric. Then every complete, two-sided, immersed, stable minimal hypersurface of $M$ is a horizontal slice $N\times\{t_0\}$ for some $t_0\in\mathbb R$.
\end{prop}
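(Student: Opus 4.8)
The plan is to combine the deep rigidity available for stable minimal hypersurfaces in ambient manifolds of nonnegative sectional and uniformly positive bi-Ricci curvature (which comes from \cite[Theorem 1.2]{CMR}, applicable here because $\dim M=n+1\le 6$) with an elementary study of the height function. Write $f:\Sigma\to M=N\times\mathbb R$ for the hypersurface, $\mathcal N$ for its unit normal, $g_\Sigma$ for the induced metric and $A$ for the (scalar) second fundamental form; let $\partial_t$ be the unit field tangent to the $\mathbb R$-factor, $\pi:M\to\mathbb R$ the projection, and $h:=\pi\circ f$.

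First I would collect the basic identities. Since $\partial_t$ is parallel in $M$ one has $\mathrm{Hess}^M\pi=0$, so minimality of $f$ gives $\Delta h=0$ on $(\Sigma,g_\Sigma)$; writing $\nu:=g(\mathcal N,\partial_t)$, one gets $\nabla^\Sigma h=\partial_t^{\top}$, hence $|\nabla^\Sigma h|^2=1-\nu^2\le 1$, and $\mathrm{Hess}_\Sigma h=\nu A$. Because $\partial_t$ is parallel, $\nu$ is a Jacobi field, i.e. $P\nu=0$ for the stability operator $P=\Delta-|A|^2-\mathrm{Ric}_g(\mathcal N,\mathcal N)$; and the product structure gives $\mathrm{Ric}_g(\mathcal N,\mathcal N)=\mathrm{Ric}_{g_N}(\mathcal N^{\mathrm{hor}},\mathcal N^{\mathrm{hor}})\ge 0$ (with $\mathcal N^{\mathrm{hor}}$ the component of $\mathcal N$ tangent to $N$), so $q:=|A|^2+\mathrm{Ric}_g(\mathcal N,\mathcal N)\ge 0$ and Theorem \ref{finiteness} applies to $P$ on $(\Sigma,g_\Sigma)$. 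This last fact will not be strictly needed below, but it records for instance that a nontrivial $L^2$ Jacobi field would be nowhere vanishing.

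Next I would verify the curvature hypotheses on $M$. A Riemannian product of manifolds of nonnegative sectional curvature has nonnegative sectional curvature, so $\mathrm{sec}_g\ge 0$. For the bi-Ricci curvature, using the product formulas $\mathrm{Ric}_g(v,v)=\mathrm{Ric}_{g_N}(v^{\mathrm{hor}},v^{\mathrm{hor}})$ and $R_g(v,w,w,v)=R_{g_N}(v^{\mathrm{hor}},w^{\mathrm{hor}},w^{\mathrm{hor}},v^{\mathrm{hor}})$ together with the inequality $R_{g_N}(e,e',e',e)\le\mathrm{Ric}_{g_N}(e,e)$ (valid since $\mathrm{sec}_{g_N}\ge 0$), one bounds $\mathrm{biRic}_g(v,w)$ below, for any orthonormal pair $v,w$ at a point of $M$, by a sum of nonnegative terms that vanishes only when both $v$ and $w$ are parallel to $\partial_t$ — impossible for an orthonormal pair. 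Since $N$ is compact with uniformly positive Ricci (and bi-Ricci) curvature, and everything is invariant under translations along $\mathbb R$, a compactness argument on the orthonormal $2$-frame bundle gives $\mathrm{biRic}_g\ge\kappa>0$ uniformly on $M$. Hence \cite[Theorem 1.2]{CMR} applies and forces $\Sigma$ to be compact (or, in the weaker form of that statement, totally geodesic).

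Finally I would conclude. If $\Sigma$ is compact and connected, then $h$ is a harmonic function on a closed manifold, hence constant, $h\equiv t_0$; thus $\nabla^\Sigma h\equiv 0$, so $\nu^2\equiv 1$ and $\mathcal N\equiv\pm\partial_t$, and $\mathrm{Hess}_\Sigma h=\nu A$ then forces $A\equiv 0$. So $f(\Sigma)\subseteq N\times\{t_0\}$ and $f$ is a totally geodesic equidimensional immersion into that slice; as $(\Sigma,g_\Sigma)$ is complete and $N$ is compact, $f$ is a Riemannian covering of $N\times\{t_0\}$, i.e. $\Sigma$ is a horizontal slice (and equals $N\times\{t_0\}$ when $f$ is an embedding). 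If \cite[Theorem 1.2]{CMR} only yields that $\Sigma$ is totally geodesic, then $\mathrm{Hess}_\Sigma h=\nu A=0$, so $|\nabla^\Sigma h|$ is a constant $c\in[0,1]$: for $c=0$ we are back in the previous case, while for $c>0$ the de Rham theorem splits $\Sigma\cong\Sigma_0^{\,n-1}\times\mathbb R$ with $\Sigma_0$ a complete totally geodesic (hence stable minimal) hypersurface of $N$, which is impossible by \cite[Corollary 1.3]{CMR} since $\dim N\le 5$, $\mathrm{sec}_{g_N}\ge 0$ and $\mathrm{Ric}_{g_N}$ is uniformly positive. The main obstacle is exactly this compactness (equivalently, totally geodesic) step: it is the hard $\mu$-bubble/Shen--Ye-type input of \cite{CMR}, while everything else is the formal height-function bookkeeping sketched above.
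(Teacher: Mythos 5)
Your proposal is correct and follows the paper's argument in its essentials: verify that the product has nonnegative sectional and uniformly positive bi-Ricci curvature, invoke \cite[Theorem 1.2]{CMR} to force $\Sigma$ compact, and then conclude that $\Sigma$ is a slice. The only (harmless) difference is in the endgame: the paper plugs the constant test function into the stability inequality on the compact $\Sigma$ to get $|A|^2=\mathrm{Ric}_M(\mathcal N,\mathcal N)=0$ and hence $\mathcal N=\pm\partial_t$, whereas you use harmonicity of the height function together with $\mathrm{Hess}_\Sigma h=\nu A$ — an equivalent, equally elementary route.
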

\begin{proof}
By the hypothesis on $(N,g_N)$ we have that $M$ has uniformly positive bi-Ricci curvature and $\mathrm{Ric}_M\geq 0$ with $\mathrm{Ric}_M(X,X)=0$ if and only if $X$ is tangent to the factor $\mathbb R$. Applying \cite[Theorem 1.2]{CMR} we have that every complete, orientable, immersed, stable minimal hypersurface of $M$ is compact. Compactness together with stability condition and $\mathrm{Ric}_M\geq 0$ implies that every such hypersurface satisfies 
$$
|A|^2=\mathrm{Ric}_M(\mathcal N,\mathcal N)= 0.
$$
In particular we deduce that $\mathcal N$ is everywhere parallel to the factor $\mathbb R$. Therefore the result follows.
\end{proof}

\item $\mathbb S^3\times\mathbb R^2$, $\mathbb S^2\times\mathbb R^3$, $\mathbb S^4\times\mathbb R^2$, $\mathbb S^3\times\mathbb R^3$, $\mathbb S^2\times\mathbb R^4$, $\mathbb S^2\times\mathbb S^2\times \mathbb R^2$ and $\mathbb{CP}^2\times\mathbb R^2$ do not satisfy the hypothesis of \cite[Theorem 1.2]{CMR}. To the best of our knowledge, Theorem \ref{noL2forms} provides the most accurate description of the topology of oriented stable minimal hypersurfaces in these ambient manifolds. 
\end{enumerate}

\subsection{Berger spheres}

We conclude this section by describing a well known class of ambient manifolds. Let $\vep>0$ and let $g_{\vep}$ be the Berger metric of parameter $\vep$ on $\mathbb S^{2n+1}$ with $n\geq 2$. This metric can be described as follows: embedd canonically $\mathbb S^{2n+1}\subset\mathbb R^{2n+2}\equiv\mathbb C^{n+1}$, let $J$ be the complex structure of $\mathbb C^{n+1}$ and, for any $p\in\mathbb S^{2n+1}$, let $\xi_p=Jp$. Then $\xi$ is a vector field tangent to the sphere called \emph{Reeb vector field}. Let $\eta$ be the $1$-form dual to $\xi$, and $\sigma=g_1$ be the standard round metric on the sphere then we define
$$
g_{\vep}=\sigma+(\vep-1)\eta^2.
$$
By definition $g_{\vep}(\xi,\xi)=\vep$. Let $\mathcal H$ be the $2n$-dimensional distribution orthogonal to $\xi$, then, with an abuse of notation, we call $J$ the complex structure on $\mathcal H$. Let $\hat\xi=\vep^{-\frac 12}\xi$. 

\begin{lemma}\label{sph-berger}
Let $\vep>0$, consider the Berger sphere $(\mathbb S^{2n+1},g_{\vep})$, then for any $X,Y$ orthogonal unit vector in $\mathcal H$ with $Y\neq\pm JX$ the following holds:
\begin{enumerate}
\item the sectional curvatures are:
$$
\mathrm{sec}_{g_{\delta}}(\hat\xi,X)=\vep,\quad \mathrm{sec}_{g_{\delta}}(X,JX)=4-3\vep,\quad \mathrm{sec}_{g_{\delta}}(X,Y)= 1,
$$
in particular $\mathrm{sec}_{g_{\delta}}>0$ for any $\vep<\frac 43$;
\item the Ricci curvature tensor is:
$$
\mathrm{Ric}_{g_{\delta}}(\hat\xi,\hat\xi)=2n\vep,\quad \mathrm{Ric}_{g_{\delta}}(X,X)=2n+2-2\vep ,
$$
and zero elsewhere, in particular $\mathrm{Ric}_{g_{\delta}}>0$ for any $\vep <n+1$;
\item the scalar curvature is:
$$
s_{g_\vep}=2n(2n+2-\vep).
$$
\item the eigenvalues of the curvature operator $\mathcal R_{g_{\vep}}$ are $1$ with multiplicity $n(2n+1)$ if $\vep=1$, otherwise they are

\begin{itemize}
\item $\vep$ with multiplicity $n(n+1)$;
\item $2-\vep$ with multiplicity $n^2-1$;
\item $2n+2-(2n+1)\vep$ with multiplicity $1$,
\end{itemize}
In particular the curvature operator is positive (resp. non-negative) defined for any $\vep<\frac{2n+2}{2n+1}$ (resp. $\vep\leq \frac{2n+2}{2n+1}$).\end{enumerate}
\end{lemma}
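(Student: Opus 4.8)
The plan is to realise $(\mathbb S^{2n+1},g_{\vep})$ as the \emph{canonical variation} of the Hopf fibration, to deduce (1)--(3) from O'Neill's submersion formulas, and to obtain (4) from \cite[Section 6.5]{BoKa}. Let $\pi:(\mathbb S^{2n+1},g_1)\to(\mathbb{CP}^n,g_{FS})$ be the Hopf fibration: a Riemannian submersion with totally geodesic one--dimensional fibres, tangent to $\xi$, onto the Fubini--Study metric normalised so that $\mathrm{sec}_{g_{FS}}(X,JX)=4$ and $\mathrm{sec}_{g_{FS}}(X,Y)=1$ for $X,Y,JX$ orthonormal in $\mathcal H$. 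Since $g_{\vep}$ agrees with $g_1$ on $\mathcal H$, equals $\vep\,g_1$ on $\mathbb R\xi$, and still makes $\mathcal H\perp\xi$, it is exactly the member ``$t=\vep$'' of the canonical variation of $g_1$ along $\pi$. The one piece of Hopf--fibration data needed is the O'Neill integrability tensor $A$: from $\nabla^{g_1}_X\xi=-JX$ one reads off $A_X\xi=-JX$, $|A_XJX|_{g_1}=1$, and $A_XY=0$ whenever $X,Y\in\mathcal H$ with $Y\perp X,JX$.

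Next I would apply the curvature formulas for a canonical variation. Since $A^{g_\vep}_XY$ is a vertical vector one has $|A^{g_\vep}_XY|^2_{g_\vep}=\vep\,|A_XY|^2_{g_1}$, and O'Neill's identities then give, for orthonormal $X,Y\in\mathcal H$ with $Y\neq\pm JX$ and $\hat\xi=\vep^{-1/2}\xi$,
\[\mathrm{sec}_{g_\vep}(\hat\xi,X)=\vep,\qquad \mathrm{sec}_{g_\vep}(X,JX)=4-3\vep,\qquad \mathrm{sec}_{g_\vep}(X,Y)=1,\]
which is (1); by $\mathrm U(n+1)$--homogeneity it suffices to verify this at one point with an adapted $g_\vep$--orthonormal frame $\{\hat\xi,X_1,JX_1,\dots,X_n,JX_n\}$. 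Summing sectional curvatures over this frame yields (2): $\mathrm{Ric}_{g_\vep}(\hat\xi,\hat\xi)=2n\vep$ and $\mathrm{Ric}_{g_\vep}(X,X)=\vep+(4-3\vep)+(2n-2)=2n+2-2\vep$, with all off--diagonal entries vanishing because $\mathrm{Ric}_{g_\vep}$ is fixed by the $\mathrm U(n)$--isotropy. Tracing the Ricci tensor gives (3): $s_{g_\vep}=2n\vep+2n(2n+2-2\vep)=2n(2n+2-\vep)$.

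Statement (4) is proved in \cite[Section 6.5]{BoKa}; for orientation, the mechanism is that the isotropy group $\mathrm U(n)$ splits $\Lambda^2T_o\mathbb S^{2n+1}=(\xi\wedge\mathcal H)\oplus\mathbb R\,\omega_0\oplus\mathfrak s\oplus\Lambda^-$, where $\omega_0=\sum_iX_i\wedge JX_i$ is the Kähler form of $\mathcal H$, $\mathfrak s$ is the space of primitive $J$--invariant $2$--forms on $\mathcal H$ ($\dim=n^2-1$) and $\Lambda^-$ the space of $J$--anti--invariant ones ($\dim=n(n-1)$), the dimensions summing to $n(2n+1)$; the self--adjoint, $\mathrm U(n)$--equivariant operator $\mathcal R_{g_\vep}$ is then diagonal in this splitting, acting by $\vep$ on $(\xi\wedge\mathcal H)\oplus\Lambda^-$ (total multiplicity $2n+n(n-1)=n(n+1)$), by $2-\vep$ on $\mathfrak s$, and by $2n+2-(2n+1)\vep$ on $\mathbb R\,\omega_0$, all three coinciding with $1$ when $\vep=1$ (so $\mathcal R_{g_1}=\mathrm{Id}$ on the round sphere, which fixes the normalisation). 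A one--line case distinction ($\vep\le 1$ versus $\vep\ge 1$) then shows that the least of these three numbers is positive (resp.\ non--negative) precisely when $\vep<\frac{2n+2}{2n+1}$ (resp.\ $\vep\le\frac{2n+2}{2n+1}$), which is the final assertion.

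The only nontrivial point is the computation underlying (4): identifying the scalars by which $\mathcal R_{g_\vep}$ acts on $\mathbb R\,\omega_0$ and on $\mathfrak s$ requires the mixed curvature components $R_{g_\vep}(X_i,JX_i,X_j,JX_j)$ with $i\neq j$, hence the full $(0,4)$--curvature tensor of the canonical variation and not merely its sectional curvatures. Parts (1)--(3), by contrast, are immediate from O'Neill's formulas together with two short traces.
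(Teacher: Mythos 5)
Your proof is correct, and for items (1)--(3) it takes a genuinely different route from the paper's. The paper (for $\vep\neq 1$) identifies the rescaled metric $|1-\vep^2|\,g_{\vep}$ with the induced metric on a geodesic sphere in $\mathbb{CP}^{n+1}$ or $\mathbb{CH}^{n+1}$, reads the sectional curvatures off \cite[Section 6.4]{BoKa}, rescales, and then obtains (2) and (3) by tracing, exactly as you do. You instead realise $g_{\vep}$ as the canonical variation of the Hopf fibration and get the three sectional curvatures from O'Neill's formulas using only $A_XY=\langle JX,Y\rangle\,\xi$ for horizontal $X,Y$. Your route is self-contained for (1)--(3), treats all $\vep>0$ uniformly (no case split at $\vep=1$), and avoids the bookkeeping of the conformal factor $|1-\vep^2|$ and of Bourguignon--Karcher's normalisation (their curvature operator is twice the usual one), at the cost of invoking the canonical-variation curvature identities; the paper's route outsources all computation to \cite{BoKa} but must track those two normalisation discrepancies. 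For item (4) the two arguments coincide in substance: both ultimately rest on \cite[Section 6.5]{BoKa} for the eigenvalues. Your sketch of the $\mathrm{U}(n)$-equivariant splitting of $\Lambda^2$ is a worthwhile addition --- the multiplicities sum to $n(2n+1)$ and the weighted sum of the eigenvalues reproduces $s_{g_{\vep}}/2$, which is a genuine consistency check --- but, as you yourself note, it is not an independent derivation, so on this point your proof and the paper's carry the same logical weight.
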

\begin{proof} The case $\vep=1$ is trivial, so from now on let $\vep\neq 1$. It is well known that the metric $\tilde g=|(1-\vep^2)|g_{\vep}$ on $\mathbb S^{2n+1}$ is the induced metric of a geodesic sphere in a complex space form: in particular if $0<\vep<1$ (resp. $\vep>1$), then $\tilde g$ is the induced metric of a geodesic sphere of $\mathbb{CP}^{n+1}$ (resp. $\mathbb{CH}^{n+1}$) of radius $\rho$ such that $\cos^2(\rho)=\vep$ (resp. $\cosh^2(\rho)=\vep$). The sectional curvatures of the metrics $\tilde g$ can be found in \cite[Section 6.4]{BoKa}. Rescaling by a factor $|1-\vep^2|$ gives the proof of item $1$. Items $2.$ and $3.$ follows directly. Finally, the eigenvalues of the curvature operator of $\tilde g$ are described in \cite[Section 6.5]{BoKa} (please be aware of different notations: in \cite{BoKa} the authors considered metrics on a $(2n-1)$-dimensional sphere and their curvature operator is twice ours). Once again, the result follows after a rescaling by a factor $|1-\vep^2|$.
\end{proof}

Thanks to Lemma \ref{sph-berger}, we are able to apply the results of Section \ref{sez2} to stable minimal hypersurfaces of the Berger spheres.

\begin{teo} \label{min-berger}
Let $f:\Sigma\rightarrow (\mathbb S^{2n+1},g_{\vep})$ be a complete oriented stable minimally immersed hypersurface of the Berger sphere $(\mathbb S^{2n+1},g_{\vep})$, with $n\geq 2$. 
\begin{enumerate}
\item If $0<\vep\leq 2n+2$, then $\Sigma$ has no non-trivial $L^2$-harmonic spinors.
\item If $0<\vep\leq\max\left\{\frac{2n+2}{2n+1},\ \frac{4(2n^2+n+6)}{8n^2+n+18}\right\}$ and there exist a $p\in\{2,\dots,n\}$ such that  $|A|^2-K^2_{\alpha}\geq 0$ for any $\alpha\subset \{1,...,m\}$ with $|\alpha|=p$, then $$\mathcal{H}^p_2(\Sigma,g_{\Sigma})=\mathcal{H}^{2n-p}_2(\Sigma,g_{\Sigma})=\{0\}.$$
\item If $\frac{8(n-1)}{17m-14}\leq \vep\leq\frac{4(11n-8)}{35n-26}$ and $|A|^2-K^2_{\alpha}\geq 0$ for any $\alpha\subset \{1,...,m\}$ with $|\alpha|=2$, then $$\mathcal{H}^2_2(\Sigma,g_{\Sigma})=\mathcal{H}^{2n-2}_2(\Sigma,g_{\Sigma})=\{0\}.$$
\end{enumerate}
\end{teo}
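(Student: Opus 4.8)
The plan is to apply the three main results of Section \ref{sez2} to the Berger sphere $(\mathbb{S}^{2n+1},g_\vep)$, with $m=2n$, by feeding in the curvature data computed in Lemma \ref{sph-berger} and performing the relevant arithmetic to verify the numerical hypotheses of each statement. For point $1$, I would invoke Corollary \ref{imm2} (the spinorial vanishing): the ambient scalar curvature is $s_{g_\vep}=2n(2n+2-\vep)$ by Lemma \ref{sph-berger}(3), which is $\geq 0$ as soon as $\vep\leq 2n+2$, and in fact strictly positive when $\vep<2n+2$. When $\vep<2n+2$ one concludes directly from the last part of Corollary \ref{imm2} that $\vol_{g_\Sigma}(\Sigma)=+\infty$ and hence there are no non-trivial $L^2$-harmonic spinors; the borderline case $\vep=2n+2$ gives $s_g\equiv 0$ on $\Sigma$, so Theorem \ref{spin} still yields constant length, and one must argue separately that such a $\Sigma$ has infinite volume (e.g. via Corollary \ref{s-flat} forcing $\Sigma$ totally geodesic with $s_{g_\Sigma}=-2\mathrm{Ric}_g(\mathcal N,\mathcal N)\geq 0$, combined with the Gauss equation giving $\mathrm{Ric}_{g_\Sigma}\geq 0$, whence by \cite[Th. 7]{Yau} either $\Sigma$ is compact — but a compact hypersurface in $\mathbb{S}^{2n+1}$ cannot be totally geodesic unless it is a great sphere, and one checks the scalar curvature identity is violated — or $\Sigma$ has infinite volume).

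For point $2$, I would apply the second point of Theorem \ref{noL2forms}, which requires $0<a\leq\sec_g\leq b$ with $b\leq\varepsilon_{2n,p}\,a$, where $\varepsilon_{m,p}$ is the constant in \eqref{nonsharp}. By Lemma \ref{sph-berger}(1) the sectional curvatures of $g_\vep$ take the values $\vep$, $4-3\vep$, and $1$, so when $\vep<\tfrac43$ we have $a=\min\{\vep,4-3\vep,1\}$ and $b=\max\{\vep,4-3\vep,1\}$; one then computes the pinching ratio $b/a$ as an explicit function of $\vep$ (piecewise, according to whether $\vep\le 1$ or $1<\vep<\tfrac43$) and compares it with $\varepsilon_{2n,p}=\frac{p(2n-p)(2\mu+1)+6n}{2p(2n-p)(\mu-1)}$ for $\mu=[(2n+1)/2]=n$. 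The condition $\vep\leq\frac{2n+2}{2n+1}$ handles the sub-case where $\mathcal R_g\geq 0$ (so point $1$ of Theorem \ref{noL2forms} applies with $\gamma=0$, giving the same conclusion without any pinching hypothesis), while the other bound $\frac{4(2n^2+n+6)}{8n^2+n+18}$ should emerge from solving $b(\vep)\leq\varepsilon_{2n,p}\,a(\vep)$ in the worst case $p=n$ (where $\varepsilon_{m,p}$ is smallest); since $|A|^2-K_\alpha^2\geq 0$ is assumed outright, nothing more is needed. The "max" in the hypothesis reflects that either route suffices.

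For point $3$, I would apply the third point of Theorem \ref{noL2forms} with $p=2$, which needs $0<a\leq\sec_g\leq b$ and $b\leq c_{2n}a$, with $c_m$ as in \eqref{sharp}: for $m=2n$ even, $c_{2n}=\frac{11(2n)-16}{2(2n-2)}=\frac{22n-16}{4n-4}=\frac{11n-8}{2(n-1)}$. Again writing $a(\vep)$ and $b(\vep)$ from Lemma \ref{sph-berger}(1), the two-sided bound $\frac{8(n-1)}{17\cdot(2n)-14}\leq\vep\leq\frac{4(11n-8)}{35n-26}$ should be exactly the solution set of $b(\vep)\leq c_{2n}\,a(\vep)$ together with the positivity requirement $a(\vep)>0$; the left endpoint presumably comes from the regime $\vep\le 1$ where $a=\vep$ and $b=1$ (so one needs $1\leq c_{2n}\vep$, but there is also the constraint from $4-3\vep$), and the right endpoint from $1<\vep<\tfrac43$ where $a=4-3\vep$ and $b=\vep$ (or $b=1$), so that $\vep\leq c_{2n}(4-3\vep)$ rearranges to $\vep\leq\frac{4c_{2n}}{1+3c_{2n}}=\frac{4(11n-8)}{2(n-1)+3(11n-8)}=\frac{4(11n-8)}{35n-26}$.

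The main obstacle I anticipate is purely bookkeeping: correctly identifying $a(\vep)$ and $b(\vep)$ as piecewise-defined functions of $\vep$ (the three candidate sectional curvatures $\vep$, $4-3\vep$, $1$ change their ordering at $\vep=1$ and at $\vep=\tfrac34$, $\vep=1$), and then carefully matching the resulting rational inequalities against the constants $\varepsilon_{2n,p}$ and $c_{2n}$ — including checking that the stated endpoints are sharp and that the degenerate boundary cases (where $a=0$, i.e. $\vep=0$ or $\vep=\tfrac43$) are correctly excluded or handled. One should also double-check consistency of the notation $m$ versus $2n$ in the statement, since the theorem writes "$|\alpha|=m$" and "$17m-14$" while $m=2n$ throughout this subsection.
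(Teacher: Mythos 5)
Your treatment of points 2 and 3 matches the paper's proof: one feeds the curvature data of Lemma \ref{sph-berger} into Theorem \ref{noL2forms}, splitting into $\vep\leq 1$ (where $a=\vep$, $b=4-3\vep$) and $1<\vep<\frac43$ (where $a=4-3\vep$, $b=\vep$), and the endpoints you derive --- $\frac{4(11n-8)}{35n-26}$ on the right, $\frac{8(n-1)}{17n-14}$ on the left (so the statement's ``$17m-14$'' is indeed an $m$-versus-$n$ slip), and $\frac{4(2n^2+n+6)}{8n^2+n+18}$ from the worst case $\varepsilon_{2n,n}=\frac{2n^2+n+6}{2n(n-1)}$ --- are exactly the ones the paper obtains. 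The ``max'' in point 2 is resolved as you say: the curvature-operator route (Theorem \ref{noL2forms}, point 1) covers $\vep\leq\frac{2n+2}{2n+1}$, and the pinching route (point 2 of that theorem, with $p=n$) covers the remaining range when the second bound is the larger one.

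For point 1 your route diverges from the paper's and has a genuine gap at the endpoint. For $\vep<2n+2$ your use of Corollary \ref{imm2} is fine, since $s_{g_\vep}=2n(2n+2-\vep)$ is a positive constant and hence uniformly positive. But at $\vep=2n+2$ your fallback fails as written: you invoke ``the Gauss equation giving $\mathrm{Ric}_{g_\Sigma}\geq 0$'', yet by Lemma \ref{sph-berger} the horizontal Ricci curvature of $g_\vep$ equals $2n+2-2\vep=-(2n+2)<0$ at $\vep=2n+2$, so no such positivity is available and the appeal to Yau's volume theorem collapses; the proposed contradiction via ``the scalar curvature identity'' is also not substantiated. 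The paper's argument avoids volume considerations entirely: by Theorem \ref{spin} any $L^2$-harmonic spinor has constant length, by Corollary \ref{s-flat} its non-triviality forces $\Sigma$ to be totally geodesic, and this is impossible because non-round Berger spheres ($\vep\neq 1$) contain no totally geodesic hypersurfaces at all by \cite[Theorem A]{OV}, while for $\vep=1$ the totally geodesic hypersurfaces (great spheres) are not stable. This rigidity input from \cite{OV} is the ingredient missing from your proposal; with it, the whole range $0<\vep\leq 2n+2$ is handled uniformly.
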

\begin{proof}
\begin{enumerate}
\item The choice of the parameter $\vep$ guarantees that the scalar curvature of the corresponding Berger sphere is non-negative by Lemma \ref{sph-berger}. Let now $s$ be a $L^2$-harmonic spinor on $\Sigma$. By Theorem \ref{spin} we know that $s$ has constant length. If $s$ is non-trival, by Corollary \ref{s-flat} we have that $\Sigma$ is totally geodesic which leads to a contradiction. In fact, if $\vep=1$  it is well know that in the round sphere the totally geodesic hypersurfaces are not stable. On the other hand, when $\vep\neq 1$, the corresponding Berger sphere does not have totally geodesic hypersurfaces by \cite[Theorem A]{OV}.

\item If $\vep\leq\frac{2n+2}{2n+1}$, thanks to Lemma \ref{sph-berger} we have that $\mathcal R_{g_{\vep}}\geq 0$ and $\mathrm{Ric}_{g_{\vep}}>0$, hence the result follows by Theorem \ref{noL2forms}, point 1. Note that $\frac{2n+2}{2n+1}\leq \frac{4(2n^2+n+6)}{8n^2+n+18}$ if and only if $n\leq 6$. So from now on let $n>6$ and $\frac{4(2n^2+n+6)}{8n^2+n+18}<\vep<\frac{2n+2}{2n+1}$. It is immediate to check that for $p\in\{2,\dots,n\}$ we have 
$$\varepsilon_{2n,p}\geq\varepsilon_{2n,n}=\frac{2n^2+n+6}{2n(n-1)},$$
where the constant $\varepsilon_{2n,p}$ was defined  \eqref{nonsharp}. Since in particular we are assuming $1<\vep<\frac 43$, by Lemma \ref{sph-berger} we can apply  Theorem \ref{noL2forms}, point 2 with $a=4-3\vep$ and $b=\vep$.

\item Let $c_{2n}=\frac{11n-8}{2(n-1)}$ be the constant defined in \eqref{sharp}. Using Lemma \ref{sph-berger} and some standard algebraic manipulations we can see that with this choice of $\vep$ the curvature hypothesis $0<a\leq b\leq c_{2n}a$ is satisfied. Therefore we can apply Theorem \ref{noL2forms}, point 3.
\end{enumerate}
\end{proof}
\begin{rem}\label{rmkcond}
The curvature conditions used in Theorem \ref{noL2forms} - namely $\mathcal R_g\geq 0$, $0<a\leq b\leq \varepsilon_{2n,n}a$ and $0<a\leq b\leq c_{2n}a$ - have clearly a big intersection, but the discussion about the parameter $\vep$ in the proof of Theorem \ref{min-berger} shows that, in general, they may not be included one into the others. See the Figure \ref{fig01}  for a schematic summary in the case of Berger spheres.
\end{rem}

\begin{figure}[h]\
\centering
\includegraphics[width=0.9\textwidth]{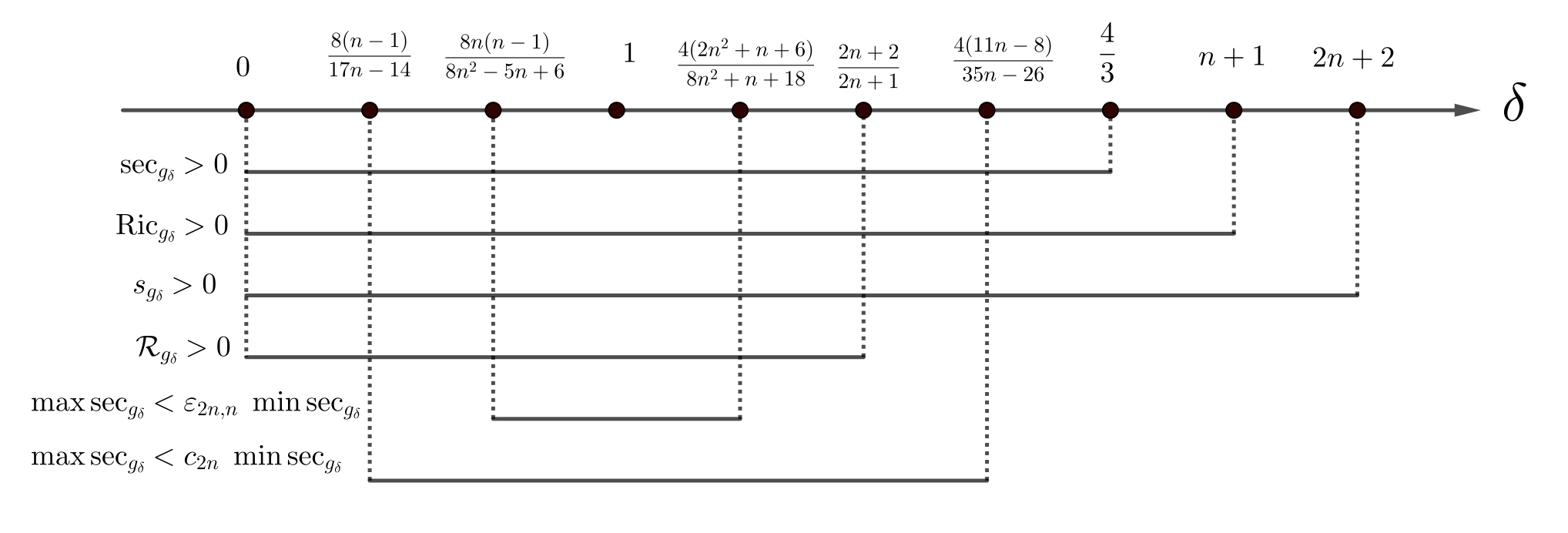}
\caption{Comparison of different notion of curvature for the Berger spheres $(\mathbb S^{2n+1},g_{\vep})$ for $n>6$. When $n<6$ (resp. $n=6$) the order of the special values of $\vep$ is the same except that $\frac{2n+2}{2n+1}$ and $\frac{4(2n^2+n+6)}{8n^2+n+18}$ are interchanged (resp. coincide) .}
\label{fig01}
\end{figure}

\section{Strongly stable constant mean curvature hypersurfaces}
Many of the ideas developed in previous sections can be easily adapted to the study of stable constant mean curvature (CMC for short) hypersurfaces. We recall that a two-sided, immersed hypersurface $f:N\rightarrow (M,g)$ is said strongly stable CMC if its mean curvature $H$ is a non-zero constant and if for any $u$ smooth function on $\Sigma$ with compact support we have 
\begin{equation}\label{stab-op}
\int_{N}(|A|^2+\mathrm{Ric}_g(\mathcal N,\mathcal N)u^2\dvol_{g_{N}}\leq\int_{N}|\nabla u|^2\dvol_{g_{N}}
\end{equation}
We point out that we are not requiring that $\int_{N}u\ dvol_{g_{N}}=0$ as in the usual definition of stability (sometimes called weak stability) for CMC hypersurfaces, see \cite{ENR} and the reference therein. Since condition \eqref{stab-op} is the same condition defining a two-sided, stable minimal hypersurface, results of previous sections can be extended with minor modification to the case of a strongly stable CMC hypersurface.  In this Section we are using the following notation: $H=\sum_i\lambda_i$, where $\lambda_i$ are the principal curvatures of the hypersurface.

The next result extends Theorem \ref{spin} to the case of CMC hypersurfaces and shows that, in this case, the ambient manifold may have negative scalar curvature, but controlled by $H$.

\begin{teo}
Let $f:N\rightarrow (M,g)$ be a spinnable, two-sided, complete, strongly stable CMC immersed hypersurface with mean curvature $H>0$. Let $P_{\mathrm{Spin(n)}}(N)\rightarrow N$ be an arbitrarily fixed Riemannan spin structure on $N$ and let $(\Sigma N,\tau)\rightarrow N$ and $\eth$ be the corresponding spinor bundle and spin-Dirac operator. If the scalar curvature of the ambient manifold satisfies $s_g+H^2\geq 0$, then every $L^2$-harmonic spinor of $(\Sigma N, \tau)\rightarrow N$ has constant length and consequently 
$$\dim\left(\ker(\eth)\cap L^2(N,\Sigma N,g_N,\tau)\right)\leq 2^{n/2}$$ if $n$ is even whereas
$$\dim\left(\ker(\eth)\cap L^2(N,\Sigma N,g_N,\tau)\right)\leq 2^{(n-1)/2}$$ if $n$ is odd. If $(\Sigma,g_{\Sigma})$ has infinite volume, then $(N,g_N)$ carries no non-trivial $L^2$-harmonic spinors.

\end{teo}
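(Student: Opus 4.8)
The plan is to transcribe, almost line by line, the proof of Theorem \ref{spin}, the only genuine change being the Gauss-equation computation, which now carries an extra $H^2$ term. Concretely, let $(\Sigma N,\tau)\to N$ be the spinor bundle attached to the fixed spin structure and $\eth$ the corresponding spin-Dirac operator. By the Lichnerowicz formula one has $\eth^2=\nabla^{\Sigma,t}\circ\nabla^{\Sigma}+\tfrac14 s_{g_N}$, so in the notation of Theorem \ref{c-length} we are in the situation $D=\eth$, $L=\tfrac14 s_{g_N}$, and we must verify the two conditions in \eqref{sign} for a suitable $q\in C^\infty(N,\mathbb R)$.

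I would take $q=\tfrac12\bigl(|A|^2+\mathrm{Ric}_g(\mathcal N,\mathcal N)\bigr)$. The strong stability inequality \eqref{stab-op} says precisely that $\int_N|\nabla\phi|^2\,\dvol_{g_N}\ge\int_N\bigl(|A|^2+\mathrm{Ric}_g(\mathcal N,\mathcal N)\bigr)\phi^2\,\dvol_{g_N}$ for every $\phi\in C_c^\infty(N)$; adding to this the trivial bound $\int_N|\nabla\phi|^2\,\dvol_{g_N}\ge 0$ and dividing by $2$ yields $\langle(\Delta-q)\phi,\phi\rangle_{L^2(N,g_N)}\ge 0$, which is the second condition in \eqref{sign} (this is the same ``factor two'' device already used in the proofs of Proposition \ref{conformal} and Theorem \ref{spin}). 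For the first condition, I would invoke the Gauss equation for a CMC hypersurface with mean curvature $H=\sum_i\lambda_i$, namely $s_{g_N}=s_g-2\mathrm{Ric}_g(\mathcal N,\mathcal N)+H^2-|A|^2$, to compute $L+q=\tfrac14 s_{g_N}+\tfrac12\bigl(|A|^2+\mathrm{Ric}_g(\mathcal N,\mathcal N)\bigr)=\tfrac14\bigl(s_g+H^2+|A|^2\bigr)$, which is nonnegative exactly because of the hypothesis $s_g+H^2\ge 0$. Hence $L+q\ge 0$ on $N$. With both conditions of \eqref{sign} in force, Theorem \ref{c-length} applies verbatim: every $s\in\ker(\eth)\cap L^2(N,\Sigma N,g_N,\tau)$ has constant length, and since the rank of $\Sigma N$ equals $2^{n/2}$ for $n$ even and $2^{(n-1)/2}$ for $n$ odd, the stated dimension bounds follow. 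Finally, if $\vol_{g_N}(N)=+\infty$, the second point of Corollary \ref{immediately} forces $\ker(\eth)\cap L^2(N,\Sigma N,g_N,\tau)=\{0\}$, giving the last assertion.

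The only step requiring any care is the sign bookkeeping in the Gauss equation together with the normalization $H=\sum_i\lambda_i$, so that the pointwise potential collapses exactly to $\tfrac14(s_g+H^2+|A|^2)$ and matches the hypothesis $s_g+H^2\ge 0$; once this is confirmed, the argument is an essentially mechanical adaptation of the minimal case, with $s_g\ge 0$ replaced by $s_g+H^2\ge 0$, and no new analytic input beyond Theorem \ref{c-length} and Corollary \ref{immediately} is needed.
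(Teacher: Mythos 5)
Your proposal is correct and follows essentially the same route as the paper: the paper's own proof is exactly the adaptation of Theorem \ref{spin} in which the Gauss--Codazzi identity \eqref{GC} is replaced by its CMC version, so that the potential becomes $\tfrac14(s_g+H^2+|A|^2)\geq 0$, and then Theorem \ref{c-length} and Corollary \ref{immediately} are invoked as you do. Your sign bookkeeping and the choice $q=\tfrac12(|A|^2+\mathrm{Ric}_g(\mathcal N,\mathcal N))$ match the paper's argument exactly.
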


\begin{proof}
We can repeat the proofs of Theorem \ref{spin} and Corollary \ref{spin-cor} with the following modifications: when $H\neq 0$, \eqref{GC} becomes  
$$
\mathrm{Ric}_g(\mathcal N,\mathcal N)+|A|^2=\frac 12(-s_{g_N}+s_g+H^2+A^2),
$$
hence \eqref{wer} becomes 
$$
\frac{1}{4}s_{g_N}+\frac{1}{2}(\mathrm{Ric}_g(\mathcal{N},\mathcal{N})+|A|^2)=\frac{1}{4}(s_g+H^2+|A|^2)\geq 0 .
$$
\end{proof}
Finally let us study $L^2$-harmonic forms on strongly stable CMC hypersurfaces.

\begin{teo}
\label{noL2forms-CMC}
Let $(M,g)$ be a Riemannian manifold of dimension $m+1$ and let $\Sigma$ be an oriented manifold of dimension $m\geq 4$ with a two-sided, strongly stable, CMC immersion $f:\Sigma\rightarrow (M,g)$ such that $(\Sigma,g_{\Sigma})$ is complete. { Let $H$ be its mean curvature and let  $2\leq p\leq \frac m2$}.
\begin{enumerate}
\item If $\mathcal{R}_g\geq0$ (or more generally there exists $\gamma\in \mathbb{R}$ such that $\mathcal{R}_g\geq \gamma$ and $p(m-p)\gamma+\mathrm{Ric}_g(\mathcal{N},\mathcal{N})\geq 0$) and  $|A|^2+K_{\alpha}(H-K_{\alpha})\geq 0$ for any $\alpha\subset \{1,...,m\}$ with $|\alpha|=p$, then every $L^2$-harmonic $p$-form on $(\Sigma,g_\Sigma)$ has constant length.  If in addition $\Sigma$ is not totally geodesic or $\mathrm{Ric}_g(\mathcal{N},\mathcal{N})$ is somewhere positive on $f(\Sigma)$, then $$\mathcal{H}^p_2(\Sigma,g_{\Sigma})=\mathcal{H}^{m-p}_2(\Sigma,g_{\Sigma})=\{0\}.$$
\item If $\mathrm{sec}_g\in [a,b]$ with $0<a\leq b\leq \varepsilon_{m,p}a$ with $\varepsilon_{m,p}$ the constant defined in \eqref{nonsharp} and $|A|^2+K_{\alpha}(H-K_{\alpha})\geq 0$ for any $\alpha\subset \{1,...,m\}$ with $|\alpha|=p$, then $$\mathcal{H}^p_2(\Sigma,g_{\Sigma})=\mathcal{H}^{m-p}_2(\Sigma,g_{\Sigma})=\{0\}.$$

\item If $\mathrm{sec}_g\in [a,b]$ with $0<a\leq b\leq c_ma$, $c_m$ the constant defined in \eqref{sharp} and $|A|^2+K_{\alpha}(H-K_{\alpha})\geq 0$ for any $\alpha\subset \{1,...,m\}$ with $|\alpha|=2$, then $$\mathcal{H}^2_2(\Sigma,g_{\Sigma})=\mathcal{H}^{m-2}_2(\Sigma,g_{\Sigma})=\{0\}.$$
\end{enumerate}
\end{teo}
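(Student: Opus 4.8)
The proof is a transcription of that of Theorem \ref{noL2forms}, the only genuinely new point being the bookkeeping of the principal curvatures forced by $H\neq 0$. First, by $L^2$-Poincar\'e duality it is enough to prove that $\mathcal{H}^p_2(\Sigma,g_\Sigma)=\{0\}$ (respectively, in the first half of point 1, that every $L^2$-harmonic $p$-form has constant length). As in Theorem \ref{noL2forms} we use the Weitzenb\"ock decomposition $\Delta_p=\nabla^t\circ\nabla+\mathcal{B}_p$ with $\mathcal{B}_p=\mathcal{B}_{p,\mathrm{res}}+\mathcal{B}_{p,\mathrm{ext}}$, and we plan to apply Th. \ref{c-length} with $q=|A|^2+\mathrm{Ric}_g(\mathcal{N},\mathcal{N})$. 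The key observation is that the strong stability inequality \eqref{stab-op} is \emph{exactly} the statement that $\langle(\Delta-q)\phi,\phi\rangle_{L^2(\Sigma,g_\Sigma)}\geq 0$ for every $\phi\in C_c^\infty(\Sigma)$; this is where \emph{strong} (rather than weak) stability is essential, since we must test against all compactly supported functions and not only the mean-zero ones. Thus the second inequality in \eqref{sign} holds, and it remains to verify $\mathcal{B}_p+q\geq 0$.

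For the residual part $\mathcal{B}_{p,\mathrm{res}}+\mathrm{Ric}_g(\mathcal{N},\mathcal{N})$ one argues verbatim as in the three cases of the proof of Theorem \ref{noL2forms}: under $\mathcal{R}_g\geq\gamma$ one uses \cite[Th. 1]{Savo} to get $\mathcal{B}_{p,\mathrm{res}}\geq p(m-p)\gamma$, so that $\mathcal{B}_{p,\mathrm{res}}+\mathrm{Ric}_g(\mathcal{N},\mathcal{N})\geq p(m-p)\gamma+\mathrm{Ric}_g(\mathcal{N},\mathcal{N})\geq 0$; in the pinched cases one invokes \cite[Prop. 3.8]{BoKa} for point 2, and Berger's inequality together with the explicit estimate already carried out for $p=2$ in the proof of Theorem \ref{noL2forms} for point 3. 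The only change occurs in the extrinsic part. Since $f$ has constant mean curvature $H=\sum_i\lambda_i$, for a multi-index $\alpha$ with $|\alpha|=p$ one has $K_\alpha+K_{\star\alpha}=H$, hence $K_{\star\alpha}=H-K_\alpha$; consequently \eqref{pcc} gives
$$\mathcal{B}_{p,\mathrm{ext}}+|A|^2\geq |A|^2+\min_{\alpha\subseteq\{1,\dots,m\},\,|\alpha|=p}K_\alpha(H-K_\alpha)\geq 0,$$
precisely by the hypothesis $|A|^2+K_\alpha(H-K_\alpha)\geq 0$ for every such $\alpha$ (when $H=0$ this recovers $K_{\star\alpha}=-K_\alpha$ and the condition $|A|^2-K_\alpha^2\geq0$ of Theorem \ref{noL2forms}). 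Combining the two estimates yields $\mathcal{B}_p+\mathrm{Ric}_g(\mathcal{N},\mathcal{N})+|A|^2\geq 0$, so Th. \ref{c-length} applies and every $L^2$-harmonic $p$-form on $(\Sigma,g_\Sigma)$ has constant length.

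To upgrade constant length to vanishing we invoke the fourth point of Cor. \ref{immediately} with $q=|A|^2+\mathrm{Ric}_g(\mathcal{N},\mathcal{N})$, which is everywhere $\geq 0$ since the residual analysis forces $\mathrm{Ric}_g(\mathcal{N},\mathcal{N})\geq 0$ (automatic when $\mathcal{R}_g\geq0$; when $\gamma<0$ it follows from $p(m-p)\gamma+\mathrm{Ric}_g(\mathcal{N},\mathcal{N})\geq0$; in the pinched cases $M$ has positive curvature, so $\mathrm{Ric}_g(\mathcal{N},\mathcal{N})>0$). In point 1 the extra assumption that $\Sigma$ is not totally geodesic or that $\mathrm{Ric}_g(\mathcal{N},\mathcal{N})$ is somewhere positive on $f(\Sigma)$ makes $q$ strictly positive at some point, whence $\vol_{g_\Sigma}(\Sigma)=+\infty$ and $\mathcal{H}^p_2(\Sigma,g_\Sigma)=\{0\}$; in points 2 and 3 the positive pinching of $\mathrm{sec}_g$ makes $q$ strictly positive everywhere, with the same conclusion. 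Finally $L^2$-Poincar\'e duality gives $\mathcal{H}^{m-p}_2(\Sigma,g_\Sigma)=\{0\}$ as well. I expect no substantive obstacle: the argument is essentially identical to the minimal case, and the only care required is in the sign relations among $K_\alpha$, $K_{\star\alpha}$ and $H$, together with the remark that it is the strong stability condition \eqref{stab-op} — tested on all of $C_c^\infty(\Sigma)$ — that feeds directly into the hypotheses of Th. \ref{c-length}.
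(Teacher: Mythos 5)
Your proposal is correct and follows exactly the paper's route: the paper proves this theorem by declaring it "analogous to Theorem \ref{noL2forms} except that now $K_{\star\alpha}=H-K_{\alpha}$," which is precisely the single modification you identify and carry through (the strong stability inequality \eqref{stab-op} feeding the second condition of \eqref{sign}, the unchanged residual estimates, and Cor. \ref{immediately}(4) for the vanishing). No gaps.
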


\begin{proof}
The proof is analogous to that of Theorem \ref{noL2forms} except for the fact that, since $H\neq 0$, now $K_{\star\alpha}=H-K_{\alpha}$ holds true for any $\alpha\subset\{1,\dots,m\}$.
\end{proof}

We conclude with the following comment.

\begin{rem}
Concerning the condition $|A|^2+K_{\alpha}(H-K_{\alpha})\geq 0$ we can make similar comments to those of Remark \ref{L21}. Moreover, since now $H\neq 0$, we have that the above inequality is automatically satisfied without trivializing the problem also when the hypersurface is convex.
\end{rem}

\vspace{1 cm}
 
\textbf{Acknowledgements.} The authors are grateful to Alessandro Savo and Barbara Nelli for interesting discussions. The authors are partially supported by INDAM-GNSAGA ''Gruppo Nazionale per le Strutture Algebriche, Geometriche e le loro Applicazioni'' of the Istituto Nazionale di Alta Matematica ''Francesco Severi''. The second author is partially supported by PRIN project no. 20225J97H5.

\vspace{1 cm}

Dipartimento di Matematica, Sapienza Universit\`a di Roma, bei@mat.uniroma1.it\\

Dipartimento di Ingegneria e Scienze dell'informazione e Matematica, Universit\`a degli studi dell'Aquila, giuseppe.pipoli@univaq.it


\begin{thebibliography}{99}

\bibitem{Atiyah}
M{.} F{.} Atiyah.
\newblock Elliptic operators, discrete groups and von Neumann algebras. 
\newblock Colloque "Analyse et Topologie'' en l'Honneur de Henri Cartan (Orsay, 1974), pp. 43--72.
{\em Ast\'erisque}, No. 32-33, Soc. Math. France, Paris, 1976.

\bibitem{Berger}
M{.} Berger.
\newblock Sur quelques vari\'et\'es riemanniennes suffisamment pinc\'ees.
\newblock {\em Bull. Soc. Math. France} 88 (1960), 57--71.

\bibitem{BDGG}
E{.} Bombieri, E{.} De Giorgi, E{.} Giusti. 
\newblock{Minimal cones and the Bernstein problem.} 
\newblock{\em Invent. Math.} 7 (1969), 243--268.

\bibitem{BoKa}
J{.}-P{.} Bourguignon, H{.} Karcher.
\newblock Curvature operators: pinching estimates and geometric examples.
\newblock {\em Ann. Sci. \'Ecole Norm. Sup.} (4) 11 (1978), no.1, 71--92.

\bibitem{Refined}
D{.} M{.} J{.} Calderbank, P{.} Gauduchon, M{.} Herzlich.
\newblock Refined Kato inequalities and conformal weights in Riemannian geometry.
\newblock {\em J. Funct. Anal.} 173 (2000), no. 1, 214-255.

\bibitem{CMR}
G{.} Catino, P{.} Mastrolia, A{.} Roncoroni, 
\newblock{Two rigidity results for stable minimal hypersurfaces,}
\newblock {\em Geom. Funct. Anal.} 34 (2022), 1--18.

\bibitem{CL1}
O{.} Chodosh, C{.} Li,
\newblock{Stable minimal hypersurfaces in $\mathbb R^4$,}
\newblock {\em Acta Math. } 233 (1) (2024), 1--31.

\bibitem{CL2}
O{.} Chodosh, C{.} Li,
\newblock{ Stable anisotropic minimal hypersurfaces in $\mathbb R^4$,}
\newblock{\em. Forum Math. Pi}, 11e3 (2023).

\bibitem{CL3}
O{.} Chodosh, C{.} Li, P{.} Minter, D{.} Stryker,
\newblock{ Stable minimal hypersurfaces in $\mathbb R^5$,}
\newblock{\em to appear on Annals of Math, preprint arXiv:2401.01492} (2024)

\bibitem{ColMin}
T{.} H{.} Colding; W{.} P{.} Minicozzi II.
\newblock A course in minimal surfaces.
\newblock Graduate Studies in Mathematics, 121. American Mathematical Society, Providence, RI, 2011.

\bibitem{do Carmo}
M{.} do Carmo, C{.} K{.} Peng.
\newblock Stable complete minimal surfaces in $\mathbb{R}^3$ are planes.
\newblock {\em Bull. Amer. Math. Soc.} (N.S.) 1 (1979), no. 6, 903-906.

\bibitem{Dodziuk}
J{.} Dodziuk.
\newblock $L^2$-harmonic forms on complete Riemannian manifolds.
\newblock Seminar of Differential Geometry, Annals of Mathematical Study 102, 1982.

\bibitem{Dod}
J{.} Dodziuk.
\newblock Vanishing theorems for square-integrable harmonic forms.
\newblock {\em Proc. Indian Acad. Sci. Math. Sci.} 90 (1981), no.1, 21--27.

\bibitem{ENR}
M {.} F{.} Elbert, B{.} Nelli, {H.} Rosenberg.
\newblock Stable constant mean curvature hypersurfaces.
\newblock{\em Proc. Amer. Math. Soc} 135 (2007), no. 10, 3359--3366.

\bibitem{Fischer}
D{.} Fischer-Colbrie, R{.} Schoen.
\newblock The structure of complete stable minimal surfaces in 3-manifolds of nonnegative scalar curvature.
\newblock {\em Comm. Pure Appl. Math.} 33 (1980), no.2, 199--211.

\bibitem{FWolf}
A{.} E{.} Fischer, J{.} A{.} Wolf.
\newblock The structure of compact Ricci-flat Riemannian manifolds.
\newblock {\em J. Differential Geometry} 10(1975), 277--288.

\bibitem{Hebey}
E{.} Hebey.
\newblock Nonlinear analysis on manifolds: Sobolev spaces and inequalities.
\newblock Courant Lect. Notes Math., 5 New York University, Courant Institute of Mathematical Sciences, New YorkAmerican Mathematical Society, Providence, RI, 1999.

\bibitem{Gallot}
S{.} Gallot, D{.} Meyer.
\newblock Op\'erateur de courbure et laplacien des formes diff\'erentielles d'une vari\'et\'e riemannienne.
\newblock {\em J. Math. Pures Appl.} (9) 54 (1975), no.3, 259--284.

\bibitem{Gino}
N{.} Ginoux.
\newblock The Dirac spectrum.
\newblock Lecture Notes in Math., 1976 Springer-Verlag, Berlin, 2009

\bibitem{Gun}
B{.} G\"uneysu.
\newblock Covariant Schr\"odinger semigroups on Riemannian manifolds.
\newblock Oper. Theory Adv. Appl., 264 Birkh\"auser/Springer, Cham, 2017. 

\bibitem{Lawson}
H{.} B{.} Lawson, M{.} Michelsohn.
\newblock Spin geometry.
\newblock Princeton Math. Ser., 38 Princeton University Press, Princeton, NJ, 1989.

\bibitem{Lee}
D{.} A{.} Lee.
\newblock Geometric relativity.
\newblock Grad. Stud. Math., 201 American Mathematical Society, Providence, RI, 2019.

\bibitem{CLi}
C{.} Li.
\newblock Index and topology of minimal hypersurfaces in $\mathbb{R}^n$. 
\newblock {\em Calc. Var. Partial Differential Equations} 56 (2017), no.6. 

\bibitem{Li}
P{.} Li.
\newblock Geometric analysis.
\newblock Cambridge Stud. Adv. Math., 134 Cambridge University Press, Cambridge,

\bibitem{Mari}
M{.} Magliaro, L{.} Mari, F{.} Roing, A{.} Savas-Halilaj.
\newblock Sharp pinching theorems for complete submanifolds in the sphere.
\newblock  {\em J. reine angew. Math (Crelles Journal), vol. 2024, no. 814, (2024),117--134}

\bibitem{Ma}
L{.} Mazet,
\newblock{Stable minimal hypersurfaces in $\mathbb R^6$}.
\newblock{\em preprint arXiv:2405.14676} (2024)

\bibitem{Michael}
J{.} Michael, L{.} M{.} Simon.
\newblock Sobolev and mean-value inequalities on generalized submanifolds of $\mathbb{R}^n$.
\newblock {\em Comm. Pure Appl. Math.} 26 (1973), 361--379.

\bibitem{Miyaoka}
R{.} Miyaoka.
\newblock $L^2$-harmonic $1$-forms on a complete stable minimal hypersurface. \newblock Geometry and global analysis (Sendai, 1993), 289--293.

\bibitem{OV}
C{.} E{.} Olmos, A{.} Rodr\' iguez-V\'asquez.
\newblock Hopf fibration and totally geodesic submanifolds.
\newblock on line first on {\em JEMS} (2025).

\bibitem{Palmer}
B{.} Palmer.
\newblock Stability of minimal hypersurfaces.
\newblock {\em Comment. Math. Helv.} 66 (1991), no.2, 185--188.

\bibitem{Peter}
P{.} Petersen.
\newblock Riemannian geometry. Second edition.
\newblock Grad. Texts in Math., 171 Springer, Cham, 2006. 

\bibitem{Pigola}
S{.} Pigola, M{.} Rigoli,  Alberto G. Setti.
\newblock Vanishing and finiteness results in geometric analysis.
\newblock Progr. Math., 266 Birkh\"auser Verlag, Basel, 2008.

\bibitem{Pogorelov}
A{.} V{.} Pogorelov.
\newblock On the stability of minimal surfaces (Russian).
\newblock {\em Dokl. Akad. Nauk} SSSR260 (1981), no.2, 293-295.

\bibitem{RS2}
D{.} Reed, B{.} Simon.
\newblock  Methods of Modern Mathematical Physics. II Fourier
Analysis, Self Adjointness. 
\newblock Academic Press, San Diego, 1972.

\bibitem{Savo}
A. {Savo}
\newblock The Bochner formula for isometric immersions.
\newblock {\em Pacific J. Math.} 272 (2014), no.2, 395--422.

\bibitem{Konrad}
K{.} Schm\"udgen.
\newblock Unbounded self-adjoint operators on Hilbert space.
\newblock Grad. Texts in Math., 265 Springer, Dordrecht, 2012.

\bibitem{SchoenYau}
R{.} Schoen, S{.} T{.} Yau.
\newblock On the structure of manifolds with positive scalar curvature.
\newblock {\em Manuscripta Math.} 28 (1979), no.1-3, 159--183.

\bibitem{Tanno}
S{.} Tanno.
\newblock $L^2$-harmonic forms and stability of minimal hypersurfaces.
\newblock {\em J. Math. Soc. Japan } 48 (1996), no. 4, 761--768.

\bibitem{Yau}
S{.} T{.} Yau. 
\newblock Some function-theoretic properties of complete Riemannian manifolds and their applications to geometry.
\newblock {\em Indiana Univ. Math. J.} 25 (1976), no.7, 659--670.

\bibitem{Zhu2}
P{.} Zhu. 
\newblock Harmonic two-forms on manifolds with non-negative isotropic curvature.
\newblock {\em Ann. Global Anal. Geom.} 40 (2011), no. 4, 427--434.

\bibitem{Zhu1}
P{.} Zhu.
\newblock $L^2$-harmonic forms and stable hypersurfaces in space forms.
\newblock {\em Arch. Math.} (Basel) 97 (2011), no.3, 271-279.


\bibitem{Zhu}
P{.} Zhu.
\newblock Vanishing theorems on hypersurfaces in Riemannian manifolds.
\newblock {\em Nonlinear Anal.} 75 (2012), no. 13, 5039--5043.


\end{thebibliography}
\end{document}